\documentclass[11pt]{article}

\usepackage[a4paper,margin=3cm]{geometry}
\usepackage[comma,square,sort&compress,numbers]{natbib}
\usepackage{amsmath, amsthm, amssymb}
\usepackage{graphicx}
\usepackage{caption}
\usepackage{subcaption}
\usepackage{pgf,tikz}
\usepackage{txfonts}
\usetikzlibrary{arrows}
\usepackage[pdfpagemode=UseOutlines]{hyperref}
\hypersetup{
    colorlinks=false,
    pdfborder={0 0 0},
    pdfauthor={B. de Rijk, B. Sandstede},
   pdftitle={Diffusive stability against nonlocalized perturbations of planar wave trains in reaction-diffusion systems}
}
\usepackage{enumitem}

\makeatletter
\def\namedlabel#1#2{\begingroup
    #2%
    \def\@currentlabel{#2}%
    \phantomsection\label{#1}\endgroup
}
\makeatother
\newcommand{\R}{\mathbb{R}}\newcommand{\Z}{\mathbb{Z}}\newcommand{\F}{\mathcal{F}}\newcommand{\C}{\mathbb{C}}

\newcommand{\El}{\mathcal{L}}
\newcommand{\h}{\mathcal{H}}

\newcommand{\Es}{\mathcal{S}}

\newcommand{\A}{\mathcal{A}}

\newcommand{\N}{\mathbb{N}}

\newcommand{\G}{\mathcal{G}}
\newcommand{\GT}{\widetilde{G}}
\newcommand{\vt}{\tilde{v}}

\newcommand{\I}{\mathbb{I}}

\newcommand{\Non}{\mathcal{N}}
\newcommand{\NT}{\widetilde{\mathcal{N}}}
\newcommand{\NTT}{\mathcal{M}}

\numberwithin{equation}{section}

\def\Re{\mathrm{Re}}
\def\Im{\mathrm{Im}}

\let\epsilon\varepsilon

\let\phi\varphi
\def\xx{\zeta}
\def\xt{{\bar{\zeta}}}
\def\yt{{\bar{y}}}
\def\uu{{\hat{u}}}
\def\ub{{\check{u}}}
\def\ut{{\tilde{u}}}
\def\ad{\mathrm{ad}}
\def\per{{\mathrm{per}}}

\newtheoremstyle{theoremsty}{14pt}{14pt}{\itshape}{}{\bfseries}{.}{.5em}{}
\newtheoremstyle{definitionsty}{14pt}{14pt}{\normalfont}{}{\bfseries}{.}{.5em}{}
\theoremstyle{theoremsty}

\newtheorem{theo}{Theorem}[section]
\newtheorem{lem}[theo]{Lemma}

\newtheorem{cor}[theo]{Corollary}

\theoremstyle{definitionsty}

\newtheorem{defi}[theo]{Definition}

\newtheorem{rem}[theo]{Remark}

\makeatletter
\renewenvironment{proof}[1][\proofname] {\par\pushQED{\qed}\normalfont\topsep6\p@\@plus6\p@\relax\trivlist\item[\hskip\labelsep\bfseries#1\@addpunct{.}]\ignorespaces}{\popQED\endtrivlist\@endpefalse} \makeatother

\begin{document}

\title{Diffusive stability against nonlocalized perturbations of planar wave trains in reaction-diffusion systems}

\author{Bj\"orn de Rijk\footnote{\texttt{bjoern.derijk@mathematik.uni-stuttgart.de}, Institut f\"ur Analysis, Dynamik und Modellierung, University of Stuttgart, Pfaffenwaldring 57, 70569 Stuttgart, Germany} \and Bj\"orn Sandstede\footnote{\texttt{bjorn\_sandstede@brown.edu}, Division of Applied Mathematics, Brown University, 182 George Street, Providence, RI 02912, USA. Sandstede was partially supported by the NSF through grant DMS-1714429.}}

\maketitle

\begin{abstract}
Planar wave trains are traveling wave solutions whose wave profiles are periodic in one spatial direction and constant in the transverse direction. In this paper, we investigate the stability of planar wave trains in reaction-diffusion systems. We establish nonlinear diffusive stability against perturbations that are bounded along a line in $\mathbb{R}^2$ and decay exponentially in the distance from this line. Our analysis is the first to treat spatially nonlocalized perturbations that do not originate from a phase modulation. We also consider perturbations that are fully localized and establish nonlinear stability with better decay rates, suggesting a trade-off between spatial localization of perturbations and temporal decay rate. Our stability analysis utilizes pointwise estimates to exploit the spatial structure of the perturbations. The nonlocalization of perturbations prevents the use of damping estimates in the nonlinear iteration scheme; instead, we track the perturbed solution in two different coordinate systems.

\textbf{Keywords.} Nonlinear stability, pointwise estimates, nonlocalized perturbations, planar reaction-diffusion systems, periodic travelling waves
\end{abstract}

\section{Introduction}

In this paper, we investigate the stability of spatially periodic planar travelling waves. Consider a planar reaction-diffusion system of the form
\begin{equation}\label{2D}
u_t = D(u_{xx}+u_{yy}) + f(u), \quad (x,y)\in\mathbb{R}^2, \quad t \geq 0, \quad u\in\mathbb{R}^n,
\end{equation}
where $n \in \N$, $D \in \R^{n \times n}$ is a symmetric, positive-definite matrix, and $f \colon \R^n \to \R^n$ is a $C^3$-smooth nonlinearity. We are interested in planar travelling-wave solutions to~\eqref{2D} of the form $u(x,y,t)=u_\infty(kx-\omega t)$, where the profile $u_\infty(\zeta)$ is $C^3$-smooth and periodic in $\zeta$ with period $1$, $k \in \R$ denotes the spatial wave number, and $\omega \in \R$ is the temporal frequency of the travelling wave. From now on, we use the term \emph{wave train} to refer to spatially-periodic travelling waves. We note that the terms ``rolls'' and ``stripes'' are also used in literature to refer to planar wave trains.

Our goal is to determine whether, and in what sense, the planar wave train $u(x,y,t)=u_\infty(kx-\omega t)$ is stable under perturbations of the initial condition $u(x,y,0)=u_\infty(kx)$. Part of our motivation stems from the case of planar spiral waves that resemble planar wave trains in the far field: understanding the stability of wave-train solutions to~\eqref{2D} is a first step towards any nonlinear stability analysis of planar spiral waves.

Before discussing the nonlinear stability of wave trains for the planar system~\eqref{2D}, we review the relevant results for the spatially one-dimensional case. Note that the function $u(x,t)=u_\infty(kx-\omega t)$ is also a wave-train solution to the one-dimensional version
\begin{equation}\label{1D}
u_t = D u_{xx} + f(u), \quad x\in\mathbb{R}, \quad t \geq 0, \quad u \in \R^n,
\end{equation}
of~\eqref{2D}. Throughout, we will assume that the wave train is spectrally stable and refer to~\S\ref{s2.1} for details on what this assumption entails. We then consider initial conditions of the form
\begin{equation}\label{e3}
\tilde{u}(x,0) = u_\infty(kx+\phi_0(x)) + v_0(x), \qquad \phi_0(x)\to\phi_\pm\mbox{ as }x\to\pm\infty,
\end{equation}
where the perturbation $v_0$ is sufficiently small in an appropriate function space, so that we change the phase, but not the wave number, of the wave train at time $t=0$. Let $\tilde{u}(x,t)$ denote the associated solution to~\eqref{1D}: we may then ask whether $\tilde{u}(x,t)$ converges in an appropriate sense to $u_\infty(kx-\omega t)$, or a translate, as time $t$ goes to infinity.

More generally, we can attempt to write the solution in the form
\begin{equation}\label{e4}
\tilde{u}(x,t) = u_\infty(kx+\phi(x,t)-\omega t) + \mbox{ terms that decay at least pointwise in time}.
\end{equation}
For the case $|\phi_+-\phi_-| \ll 1$, it was shown in~\cite{SAN3} that~\eqref{e4} holds for a function $\phi(x,t)$ that has an asymptotically self-similar profile as $t \to \infty$: indeed, $\phi(x,t)$ converges to a moving Gaussian if $\phi_+=\phi_-$ and to a moving error function with amplitude $\phi_+-\phi_-$ in the case where $0<|\phi_+-\phi_-|\ll1$. Similar results, though without the explicit asymptotics, were also shown in~\cite{JONZNL,JONZ,JUN,JUNNL,WUS} using different methods -- see Remark~\ref{litoverview} below for more details. The results in~\cite{JONZNL} were complemented with explicit asymptotics in~\cite{JONZNL2}, recovering the results of~\cite{SAN3} for the case $0 < |\phi_+ - \phi_-| \ll 1$. The restriction that $|\phi_+-\phi_-|$ is small was recently removed in~\cite{IS}. We emphasize that, although the initial phase off-set $\phi_0$ can be nonlocalized, the perturbation $v_0$ in~\eqref{e3} has to be localized in all the aforementioned papers, that is, we need to assume that $v_0(x) \to 0$ sufficiently rapidly as $x \to \pm \infty$.

In this paper, we examine the nonlinear stability of planar wave trains $u(x,y,t)=u_\infty(kx-\omega t)$ that satisfy~\eqref{2D}. We note that increasing the spatial dimension from one to two improves the decay properties on the linear level: for instance, the solutions to the heat equation
\begin{equation*}
u_t = \Delta_{{\bf{x}}} u, \quad {\bf{x}} \in \R^d, \quad t \geq 0, \quad u \in \R^n,
\end{equation*}
decay pointwise with rate $t^{-d/2}$ so that increasing the spatial dimension $d\in\N$ yields faster decay in time. This leads to the natural question whether the additional decay can be exploited to allow for a larger -- or different -- class of initial conditions. In this paper, we answer this question in the affirmative by proving that planar wave trains are stable against a class of nonlocalized perturbations $v_0(x,y)$ rendering our initial conditions complementary to those considered in the literature for one spatial dimension -- see Remark~\ref{complement} below for more details.

More specifically, given initial conditions of the form
\begin{equation}\label{e23}
\tilde{u}(x,y,0) = u_\infty(kx) + v_0(x,y),
\end{equation}
we prove that the resulting solution $\tilde{u}(x,y,t)$ to~\eqref{2D} decays pointwise with rate $t^{-1/2}$ like a diffusive Gaussian if $v_0(x,y)$ is bounded along an arbitrary, but fixed, line in $\mathbb{R}^2$ and decays exponentially in the distance from this line. We note that the obtained decay is optimal for this class of perturbations -- see Remark~\ref{optimality} below -- although it is slower than one would intuitively expect suggesting a trade-off between spatial localization of the initial perturbation and temporal decay rate. Indeed, our second result recovers the expected pointwise decay with rate $t^{-1}$ provided $v_0(x,y)$ is sufficiently localized in all spatial directions in $\R^2$. Although this second theorem provides an expected result, it seems not to be present in the literature at the moment -- see Remark~\ref{litoverview2} below.

The proofs of our main results depend on pointwise estimates that allow us to exploit the specific spatial structure of the perturbations. Therefore, we extend the pointwise Green's function estimates  for wave trains proved in~\cite{JUN} in one spatial dimension to the planar case. However, the nonlinear stability analysis in~\cite{JUN} -- which is, in turn, based on~\cite{JONZ} -- does not extend to our planar case: the $L^2$-localization of the initial perturbation is utilized in~\cite{JONZ,JUN} in a crucial way in their nonlinear damping estimates, which are necessary to close the nonlinear iteration arguments presented in~\cite{JONZ,JUN}. The main challenge in our proof is to replace these damping estimates: we rely instead on tracking the perturbed solution in two different coordinate systems, where one of these coordinates is used to correct for the phase shift of the solution relative to the original wave train.

We now comment briefly on open problems. A major open problem in the one-dimensional case is the long-time dynamics of initial conditions of the form $\tilde{u}(x,0) = u_\infty(k_0(x)x)$, which correspond to perturbations of the wave number instead of just the phase. In the planar case, not much is known about the linear and nonlinear stability of spiral waves. Moreover, it is unexplored whether, and in what form, planar wave trains are stable against an initial change in phase, that is whether, and in what form, stability holds for initial conditions of the form $\tilde{u}(x,y,0) = u_\infty(kx + \phi_0(x,y)) + v_0(x,y)$ with $v_0$ possibly nonlocalized. Finally, one expects even higher decay rates in the stability analysis of wave trains in spatial dimensions $d \geq 3$, and one could therefore attempt studying even larger classes of initial conditions in this setting.

\begin{rem} \label{litoverview}
There are various methods to prove nonlinear stability results of wave-train solutions to reaction-diffusion systems in one spatial dimension. The proofs in~\cite{IS,SAN3} are based on a decomposition of phase and amplitude variables as in~\cite{DSSS}, mode filters to separate critical (translational) modes from noncritical exponentially decaying modes, and on the renormalization group techniques developed in~\cite{BKL,SCH2}. On the other hand, in~\cite{JONZNL,JONZ}, $L^p$-estimates on the Green's function (again obtained by separating critical from noncritical modes) are applied in a nonlinear iteration scheme that was originally devised in shock-wave theory. The set-up in~\cite{JUN,JUNNL} follows~\cite{JONZNL,JONZ}, but employs pointwise Green's function estimates instead. The use of pointwise Green's function estimates was introduced in~\cite{ZUH} and further extended in~\cite{OHZUM} to the setting of wave trains: it has the advantage of resulting in detailed information on both temporal and spatial decay. Finally, in~\cite{WUS}, a normal form about the wave train was constructed that arises as a conjugation of the reaction-diffusion system~\eqref{1D} with a lattice dynamical system of discrete phase equations. The normal form exhibits a conservation law associated with the translational symmetry, which can be exploited to gain additional decay in the nonlinear stability argument.
\end{rem}

\begin{rem}\label{litoverview2}
Although this paper seems to be the first to consider the nonlinear stability of wave trains in planar reaction-diffusion systems~\eqref{2D}, the methods for proving nonlinear stability in one spatial dimension -- see Remark~\ref{litoverview} -- have been applied in higher space dimensions to different systems. For instance, the renormalization approach from~\cite{SCH2} is used in~\cite{UECR} to prove that the planar Swift-Hohenberg equation admits nonlinearly stable wave-train solutions. In addition, the methods in~\cite{JONZNL,JONZ} are applied in higher space dimensions in~\cite{JONZVD12,OHZVD3} to prove nonlinear stability results for wave trains in viscous systems of conservation laws. We emphasize that, unlike in the current paper, localized perturbations were considered in these references.
\end{rem}

\begin{rem} \label{complement}
In this paper, we consider initial conditions of the form~\eqref{e23}, where the perturbation $v_0(x,y)$ is bounded along a line in $\mathbb{R}^2$ and decays exponentially in the distance from this line. All initial conditions for one spatial dimension considered in the literature -- see~\cite{JONZNL,JONZ,JUN,JUNNL,WUS,SAN3} -- are of the form~\eqref{e3}, where the perturbation $v_0(x)$ is sufficiently localized and the initial phase off-set $\phi_0(x)$ satisfies $\phi_0(x) \to \phi_\pm$ as $x \to \pm \infty$. This implies that, in the limit $x \to \pm \infty$, the initial condition $\tilde{u}(x,0)$ converges to the translate $u_\infty(kx + \phi_\pm)$ of the wave train. In our case, the initial condition~\eqref{e23} does not (necessarily) converge to a translate of the wave train when letting $|(x,y)| \to \infty$ over some line in $\R^2$. Indeed, we allow for instance for the perturbation $v_0(x,y) = \epsilon e^{-y^2}$ with $0 < \epsilon \ll 1$. Thus, the nonlocalization of the perturbations in this paper does not originate from a phase modulation, and our class of initial conditions is therefore complementary to those present in literature.
\end{rem}

\section{Main results} 

In this paper, we establish nonlinear diffusive stability of the planar wave-train solution $u(x,y,t) = u_\infty(kx-\omega t)$ to~\eqref{2D} against a class of spatially localized perturbations and a class of spatially nonlocalized perturbations under the assumption that the wave train is spectrally stable. Our nonlinear stability results and the associated spectral assumptions are most naturally formulated by switching to a co-moving frame $\xx = kx - \omega t$, which yields a \emph{stationary} wave-train solution $u(\xx,y,t) = u_\infty(\xx)$ to
\begin{align} u_t = D\left(k^2 \partial_{\xx\xx} + \partial_{yy}\right)u + \omega \partial_\xx u + f(u), \quad (\xx,y) \in \R^2, \quad t \geq 0, \quad u \in \R^n, \label{comove}\end{align}
Before stating our nonlinear stability results, we disclose what spectral stability entails.

\subsection{Spectral stability}\label{s2.1}

The linearization of~\eqref{comove} about $u(\xx,y,t) = u_\infty(\xx)$ is
\begin{align} u_t = \El u, \qquad \El u = D\left(k^2 \partial_{\xx\xx} + \partial_{yy}\right) u + \omega \partial_\xx u + f'(u_\infty(\xx)) u, \label{lin}\end{align}
where $\El$ is a sectorial operator on $L^2(\R^2,\C^n)$ -- see~\cite{HEN,LUN} and note that $\El$ is elliptic as $D$ is a symmetric positive-definite matrix. We apply the Fourier transform $\hat{ \ }$, given by
\begin{align*} \uu(\xx,\nu_y) = \frac{1}{\sqrt{2\pi}}\int_{\R} e^{-i\nu_y y} u(\xx,y)dy,\end{align*}
in the transverse coordinate $y$ to the linearization~\eqref{lin} and obtain
\begin{align} \uu_t = \El_{\nu_y} \uu, \qquad \El_{\nu_y} \uu  = D\left(k^2 \partial_{\xx\xx} - \nu_y^2\right) \uu + \omega \partial_\xx \uu + f'(u_\infty(\xx)) \uu, \label{Fourier}\end{align}
where $\El_{\nu_y}$ is a sectorial operator on $L^2(\R,\C^n)$ for each $\nu_y \in \R$. The solution operator is represented via the inverse Fourier transform as
\begin{align} [e^{\El t} u](\xx,y) = \frac{1}{\sqrt{2\pi}}\int_{\R} e^{i\nu_y y} e^{\El_{\nu_y} t} \uu(\xx,\nu_y)d\nu_y. \label{interm}\end{align}
Since $\El_{\nu_y}$ is a periodic differential operator on $L^2(\R,\C^n)$, we can apply the Bloch transform to~\eqref{Fourier} -- see~\cite{DSSS,REE}. The composition $\check{ \ }$ of the Bloch and Fourier transforms is given by
\begin{align*} \ub(\xx,{\pmb \nu}) = \sum_{j \in \Z} e^{2\pi i j \xx} \F_2(u)(\nu_x + 2\pi j,\nu_y), \qquad \F_2(u)({\pmb \nu}) = \frac{1}{2\pi} \int_{\R^2} e^{-i{\pmb \nu} \cdot {\pmb \varsigma}} u(\xx,y)d\xx dy,\end{align*}
where $\F_2$ is the Fourier transform on $\R^2$, $\cdot$ denotes the dot product, ${\pmb \nu} = (\nu_x,\nu_y)$ and ${\pmb \varsigma} = (\xx,y)$. Applying the Fourier-Bloch transform to~\eqref{lin} yields
\begin{align} \ub_t = L_{{\pmb \nu}} \ub, \qquad L_{{\pmb \nu}} \ub = D\left(k^2 \left(\partial_{\xx} + i\nu_x\right)^2 - \nu_y^2\right) \ub + \omega (\partial_\xx + i\nu_x) \ub + f'(u_\infty(\xx)) \ub, \label{Bloch2}\end{align}
where $L_{\pmb \nu}$ is a sectorial operator on $L_\per^2([0,1],\C^n)$ with compact resolvent for each ${\pmb \nu} \in \Omega := [-\pi,\pi] \times \R$. The solution operator is represented via the inverse Fourier-Bloch transform as
\begin{align} [e^{\El t} u](\xx,y) = \frac{1}{2\pi} \int_{\Omega} e^{i{\pmb \nu} \cdot {\pmb \varsigma}} e^{L_{{\pmb \nu}} t} \ub({\pmb \nu},\xx)d{\pmb \nu}. \label{Bloch} \end{align}
Furthermore, we obtain the spectral decomposition
\begin{align} \sigma(\El) = \bigcup_{{\pmb \nu} \in \Omega} \sigma(L_{\pmb \nu}), \label{specdecomp}\end{align}
where the spectra $\sigma(L_{\pmb \nu})$ are discrete for each ${\pmb \nu} \in \Omega$ as $L_{\pmb \nu}$ has compact resolvent.

Since $u_\infty'$ is a solution to~\eqref{lin}, $0$ is an eigenvalue of $L_0$. Now, provided $0$ is simple as an eigenvalue of $L_0$, the implicit function theorem provides a surface $\lambda_0 \colon U \to \C$, where $U \subset \C^2$ is a neighborhood of $0$, such that $\lambda_0(0) = 0$ and $\lambda_0({\pmb \nu})$ is a simple eigenvalue of $L_{\pmb \nu}$ for any ${\pmb \nu} \in U$. By symmetry of the spectrum of the real operator $\El$ and the decomposition~\eqref{specdecomp}, the surface $\lambda_0[\Omega] \subset \sigma(\El)$ touches the imaginary axis generically quadratically. So, even in the most stable scenario, where the spectrum of $\El$ is bounded away to the left of the imaginary axis except for a quadratic touching at the origin, there is no spectral gap. This leads to the following definition.

\begin{defi}
The planar wave train $u(x,y,t)=u_\infty(kx-\omega t)$ to~\eqref{2D} is \emph{spectrally stable} if there exists $\eta,\epsilon > 0$ such that
\begin{itemize}
\item[(D1)] $L_0$ has no spectrum in $\Re(\lambda) \geq -\eta$ besides a simple eigenvalue $\lambda = 0$;
\item[(D2)] It holds $\Re(\sigma(L_{\pmb \nu})) \leq -\eta |{\pmb \nu}|^2$ for all ${\pmb \nu} \in \Omega$ with $|{\pmb \nu}| \leq \epsilon$;
\item[(D3)] It holds $\Re(\sigma(L_{\pmb \nu})) < -\eta$ for all ${\pmb \nu} \in \Omega$ with $|{\pmb \nu}| \geq \epsilon$.
\end{itemize}
\end{defi}

Throughout this paper we require the spectral stability assumptions (D1)-(D3) to hold true. Similar spectral assumptions are made in one spatial dimension -- see~\cite{JONZ,JUN,SAN3}. We emphasize that spectrally-stable planar wave trains to~\eqref{2D} can be generated from spectrally-stable wave trains to~\eqref{1D} -- see~\S\ref{specasscon}. Examples of such wave trains in one spatial dimension can be found in~\cite{BDR2,SAS}. Note that by Sturm-Liouville theory wave-train solutions to~\eqref{1D} can only be spectrally stable if $n > 1$, i.e.~if~\eqref{1D} is a proper \emph{system} of reaction-diffusion equations.

\subsection{Statement of results} \label{sec:statres}

Our results concern nonlinear diffusive stability of spectrally-stable planar wave-train solutions $u(x,y,t)=u_\infty(kx-\omega t)$ to~\eqref{2D} against classes of spatially localized and spatially nonlocalized perturbations. In our first result, we take a unit vector ${\pmb w} \in \R^2$ and consider nonlocalized $W^{2,\infty}$-perturbations that are bounded, when restricted to the family of lines
\begin{align} \left\{{\pmb z} \in \R^2 \colon {\pmb w} \cdot {\pmb z} = a\right\}, \qquad a \in \R, \label{lines}\end{align}
but that are exponentially localized in any other spatial direction. We establish diffusive Gaussian-like decay of the perturbation and its derivatives with rate $t^{-1/2}$. In addition, the perturbation stays bounded for all times $t \geq 0$ on lines of the form~\eqref{lines} and exponentially localized in any other spatial direction. If we account for translational invariance and allow for a phase shift, it is possible to obtain decay with rate $\log(t)/t$. Thus, our result is as follows.

\begin{theo} \label{maintheorem}
Assume (D1)-(D3) hold true and let $(\beta,\gamma) \in \R^2$ be a unit vector. There exists constants $E_0>0$ and $C,M > 1$ such that for all $v_0 \in W^{2,\infty}(\R^2,\R^n)$ satisfying
\begin{align} \sup_{\xx, y \in \R} e^{\frac{|\beta \xx + \gamma y|^2}{M}} \left(\left\|v_0(\xx,y)\right\| + \left\|\partial_\xx v_0(\xx,y)\right\| + \left\|\partial_{\xx\xx} v_0(\xx,y)\right\|\right) \leq E_0, \label{v01}\end{align}
there exists a solution $\ut(t)$ to~\eqref{comove}, with initial condition $\ut(0) = u_\infty + v_0$, for all time $t \geq 0$ satisfying
\begin{align*}
\left\|\ut(\xx,y,t) - u_\infty(\xx)\right\| \leq C\frac{e^{-\frac{|\beta \xx + \gamma y + \alpha \beta t|^2}{M(1 + t)}}}{\sqrt{1+t}}, \qquad \left\|D_{\xx,y}^{\mathfrak{c}}\left(\ut(\xx,y,t) - u_\infty(\xx)\right)\right\| \leq C\frac{e^{-\frac{|\beta \xx + \gamma y + \alpha \beta t|^2}{M(1 + t)}}}{\sqrt{t}},
\end{align*}
for $\xx,y \in \R$ and $t > 0$, where $\alpha$ is as in Lemma~\ref{Lem1} and $\mathfrak{c} \in \Z_{\geq 0}^2$ is a multi-index with $|\mathfrak{c}| = 1$. In addition, there exists a function $\psi \colon \R_{\geq 0} \to  W^{3,\infty}(\R^2,\R)$ such that
\begin{align*}
\left\|\ut(\xx + \psi(\xx,y,t),y,t) - u_\infty(\xx)\right\| &\leq C\frac{\log(2+t)}{1+t} e^{-\frac{|\beta \xx + \gamma y + \alpha \beta t|^2}{M(1 + t)}},\\
\left\|\psi(\xx,y,t)\right\| \leq \frac{e^{-\frac{|\beta \xx + \gamma y + \alpha \beta t|^2}{M(1 + t)}}}{\sqrt{1+t}}, & \qquad \left\|D_{\xx,y,t}^{\mathfrak{b}} \psi(\xx,y,t)\right\| \leq \frac{e^{-\frac{|\beta \xx + \gamma y + \alpha \beta t|^2}{M(1 + t)}}}{1+t},
\end{align*}
for $\xx,y \in \R, t \geq 0$ and $\mathfrak{b} \in \Z_{\geq 0}^3$ with $1 \leq |\mathfrak{b}| \leq 3$.
\end{theo}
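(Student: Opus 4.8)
The plan is to reduce the planar problem to the one-dimensional Bloch-wave framework by Fourier-transforming in the transverse variable $y$, then carry out a pointwise nonlinear iteration in two coordinate systems. First I would set up the perturbation equation: writing $\ut(\xx,y,t) = u_\infty(\xx + \psi(\xx,y,t)) + v(\xx,y,t)$ leads, after subtracting the wave-train equation, to a system of the form $v_t = \El v + \Non(v,\psi,\nabla\psi,\ldots)$ together with a phase equation for $\psi$ that is chosen to cancel the slowest-decaying (critical) part of the nonlinearity. The nonlinearity $\Non$ is quadratic in $(v,\psi)$ and their derivatives, so Taylor's theorem with the $C^2$-hypothesis on $f$ gives the quadratic bound $\|\Non\| \lesssim (\|v\| + \|\nabla\psi\| + |\psi_t|)(\|v\| + \|v_\xx\| + \ldots)$. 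The role of the two coordinate systems is exactly as the introduction advertises: the unmodulated variable $v$ is estimated in the original frame (where it decays like $t^{-1/2}$ but only using boundedness of $v_0$ along the line), while the combination $\ut(\xx+\psi,y,t) - u_\infty(\xx)$ is tracked in the modulated frame to extract the extra decay, replacing the damping estimates of~\cite{JONZ,JUN}.

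The second step is the linear estimate. By the Fourier representation~\eqref{interm} and the Bloch representation~\eqref{Bloch}, I would decompose $e^{\El t}$ into a low-frequency (critical) part supported on $|{\pmb \nu}| \leq \epsilon$ and a high-frequency part, using a smooth cutoff. On the critical part, the simple eigenvalue surface $\lambda_0({\pmb \nu})$ with $\lambda_0(0)=0$ governs the dynamics; its quadratic expansion $\lambda_0({\pmb \nu}) = i\alpha\nu_x - (d_{xx}\nu_x^2 + d_{yy}\nu_y^2) + \ord(|{\pmb \nu}|^3)$ (with $\alpha$ the linear group velocity from Lemma~\ref{Lem1}) produces, after deforming contours and integrating, the anisotropic moving-Gaussian kernel $e^{-|\beta\xx+\gamma y+\alpha\beta t|^2/(M(1+t))}/\sqrt{1+t}$ once one accounts for the one-dimensional exponential localization transverse to the line $\beta\xx+\gamma y = \text{const}$; the exponential localization of $v_0$ in that direction lets me shift the $\nu$-contour in the corresponding complex direction and pick up the Gaussian. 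The remaining spectral part is bounded, by (D1) and (D3) together with sectoriality, by $Ce^{-\eta t}$ in $W^{1,\infty}$, plus the standard parabolic smoothing factor $t^{-1/2}$ for one derivative. These are precisely the planar analogues of the pointwise Green's function bounds of~\cite{JUN}, whose extension to~\eqref{2D} the paper has announced; I would invoke them as the pointwise kernel estimates and then convolve.

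The third step is the nonlinear iteration itself. I would define a template function $\Theta(\xx,y,t)$ equal to the conjectured right-hand sides — $(1+t)^{-1/2}e^{-(\cdot)^2/(M(1+t))}$ for $v$ and $\psi$, $t^{-1/2}e^{-(\cdot)^2/(M(1+t))}$ for first derivatives of $v$, $(1+t)^{-1}e^{-(\cdot)^2/(M(1+t))}$ for $\nabla\psi$ and $\psi_t$, and $\log(2+t)(1+t)^{-1}e^{-(\cdot)^2/(M(1+t))}$ for the modulated residual — and set up the Duhamel integral equations for $v$ and $\psi$ in the norm $\|h\| := \sup_{\xx,y,t} \|h(\xx,y,t)\|/\Theta(\xx,y,t)$. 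The crux is the convolution inequality: for the moving-Gaussian kernel $G$ and template $\Theta$ one has $\int_0^t \int_{\R^2} G(\xx-\xx',y-y',t-s)\,\Theta(\xx',y',s)^2\, d\xx'dy'\,ds \lesssim \log(2+t)\,\Theta(\xx,y,t)\cdot(1+t)^{-1/2}$ or similar, the logarithm arising from the borderline $\int_0^t (1+t-s)^{-1/2}(1+s)^{-1}ds$; these are the standard Gaussian-moment / Hausdorff-Young-type lemmas (cf.~\cite{ZUH,JONZ}) which I would isolate as a separate technical lemma. Feeding the quadratic nonlinear bound through Duhamel, one obtains a closed inequality $\|v\| + \|\psi\| \leq C(E_0 + (\|v\|+\|\psi\|)^2)$ on a finite-but-arbitrary time interval, so for $E_0$ small a continuity/bootstrap argument gives global existence together with all the claimed bounds; the derivative bounds on $\psi$ up to order $3$ follow by differentiating the Duhamel formula for $\psi$ and using the corresponding derivative kernel estimates, which is why $W^{3,\infty}$ appears. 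The main obstacle, and the place where the planar geometry really enters, is engineering the iteration so that the nonlocalized-in-one-direction template is reproduced rather than worsened: the nonlinearity does not decay in $L^2$, so one cannot absorb it by energy methods, and the argument must exploit that every quadratic term contains at least one factor carrying the decaying Gaussian profile and that convolution with the planar heat-like kernel preserves the anisotropic Gaussian — verifying this reproduction, including the precise $\log$ loss in the modulated variable, is the technical heart of the proof.
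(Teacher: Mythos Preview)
Your proposal tracks the paper's overall architecture --- pointwise Green's function decomposition, phase modulation, template-function bootstrap --- but misses the step the paper flags as its central novelty, without which the iteration does not close. The nonlinearity $\Non$ in the modulated equation (the paper's~\eqref{nonlinearity}) contains \emph{second}-order spatial derivatives $v_{\xx\xx}$ and $v_{\xx y}$, not just $v_\xx$. These cannot be obtained from the Duhamel formula for $v$: two derivatives on $\GT$ produce a $t^{-2}$ kernel that integrates to a nonintegrable $(t-s)^{-1}$, and integrating by parts once merely shifts the problem to third order. The paper's remedy is twofold. First, every $v$-derivative in $\Non$ is paired with a $\psi$-derivative (never with another $v$-term), so one integration by parts against $\GT$ or $e$ reduces the requirement to $v_\xx$ alone; this is the passage from~\eqref{integralscheme} to~\eqref{integralscheme2}. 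Second --- and this is the actual substitute for damping --- the remaining $v_\xx$ is not estimated from the $v$-system but from the \emph{unshifted} perturbation $\vt = \ut - u_\infty$, whose equation~\eqref{pertbeq2} has the derivative-free nonlinearity $\NT_1 = f(u_\infty+\vt)-f(u_\infty)-f'(u_\infty)\vt$; the Duhamel integrals~\eqref{integralscheme3} for $\vt_\xx$ and $\vt_{\xx\xx}$ then involve only one kernel derivative, and the mean-value estimate~\eqref{conversion} transfers these bounds back to $v_\xx$. Your description of the two coordinate systems inverts their roles and, more importantly, does not identify that the unshifted system exists precisely to supply the missing derivative bounds; this is the content of Remark~\ref{motvt}.

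Two smaller points. Your ansatz $\ut = u_\infty(\xx+\psi) + v$ is the inverse-modulated form, whereas the paper uses the forward form $\ut(\xx+\psi,y,t) = u_\infty(\xx) + v$; it is the latter that yields the clean left-hand side $(\partial_t-\El)[v - u_\infty'\psi]$ matching the kernel splitting $G = u_\infty' e + \GT$. And $\psi$ is \emph{defined} by~\eqref{defpsi} so as to cancel the $u_\infty' e$ contribution in the Duhamel representation of $v$, not to cancel a critical part of the nonlinearity.
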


Next, we study the stability of the planar wave-train solution $u(x,y,t)=u_\infty(kx-\omega t)$ to~\eqref{2D} against exponentially localized perturbations. We obtain diffusive Gaussian-like decay of the perturbation with rate $t^{-1}$, which can be improved to $t^{-3/2}$ by tracking the phase shift. Moreover, the perturbation stays exponentially localized for all time $t \geq 0$. This leads to the following result.

\begin{theo} \label{maintheorem2}
Assume (D1)-(D3) hold true. There exists constants $E_0>0$ and $C,M > 1$ such that for all $v_0 \in W^{1,\infty}(\R^2,\R^n)$ satisfying
\begin{align} \sup_{\xx, y \in \R} e^{\frac{\xx^2 + y^2}{M}} \left(\left\|v_0(\xx,y)\right\| + \left\|\partial_\xx v_0(\xx,y)\right\|\right) \leq E_0, \label{v02}\end{align}
there exists a solution $\ut(t)$ to~\eqref{comove}, with initial condition $\ut(0) = u_\infty + v_0$, for all time $t \geq 0$ satisfying
\begin{align*}
\left\|\ut(\xx,y,t) - u_\infty(\xx)\right\| &\leq C\frac{e^{-\frac{|\xx + \alpha t|^2 + y^2}{M(1 + t)}}}{1+t}, \qquad \left\|\partial_\xx \left(\ut(\xx,y,t) - u_\infty(\xx)\right)\right\| \leq C e^{-\frac{|\xx + \alpha t|^2 + y^2}{M(1 + t)}}\frac{\log(2+t)}{\sqrt{t}\sqrt{1+t}},\\
&\left\|\partial_y \ut(\xx,y,t)\right\| \leq C e^{-\frac{|\xx + \alpha t|^2 + y^2}{M(1 + t)}}\frac{\log(2+t)}{\sqrt{t}(1+t)},
\end{align*}
for $\xx,y \in \R$ and $t > 0$, where $\alpha$ is as in Lemma~\ref{Lem1} and $\mathfrak{c} \in \Z_{\geq 0}^2$ is a multi-index with $|\mathfrak{c}| = 1$. In addition, there exists a function $\psi \colon \R_{\geq 0} \to  W^{3,\infty}(\R^2,\R)$ such that
\begin{align*}
\left\|\ut(\xx + \psi(\xx,y,t),y,t) - u_\infty(\xx)\right\| &\leq C\frac{e^{-\frac{|\xx + \alpha t|^2 + y^2}{M(1 + t)}}}{(1+t)\sqrt{1+t}},\\
\left\|\psi(\xx,y,t)\right\| \leq C\frac{e^{-\frac{|\xx + \alpha t|^2 + y^2}{M(1 + t)}}}{1+t}, & \qquad \left\|D_{\xx,y,t}^{\mathfrak{b}} \psi(\xx,y,t)\right\| \leq C\frac{e^{-\frac{|\xx + \alpha t|^2 + y^2}{M(1 + t)}}}{(1+t)\sqrt{1+t}},
\end{align*}
for $\xx,y \in \R, t \geq 0$ and $\mathfrak{b} \in \Z_{\geq 0}^3$ with $1 \leq |\mathfrak{b}| \leq 3$.
\end{theo}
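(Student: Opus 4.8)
The plan is to adapt the pointwise Green's function scheme of Johnson and Zumbrun~\cite{JONZ,JUN} to the planar equation~\eqref{comove} and to data that are only bounded in $W^{1,\infty}$ rather than $L^2$-localized; the argument runs in parallel with that of Theorem~\ref{maintheorem}, the fully localized weight $e^{(\xx^2+y^2)/M}$ here playing the role of the line weight $e^{|\beta\xx+\gamma y|^2/M}$ there and being responsible for the faster rates. The first step is to establish the linear pointwise estimates. Using the Bloch--Fourier representation~\eqref{Bloch} together with (D1)--(D3), I split the Green's function of $e^{\El t}$ as $G = G_\p + \tilde G$, where $G_\p$ collects the contribution of the critical eigenvalue surface $\lambda_0(\pmb\nu)$ for $\pmb\nu$ near $0$. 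Analytic perturbation theory (Lemma~\ref{Lem1}) gives $\lambda_0(\pmb\nu) = -i\alpha\nu_x + \ord(|\pmb\nu|^2)$ with real $\alpha$, no $\nu_y$-drift by the $y\mapsto -y$ symmetry, and quadratic part of strictly negative real part by (D2); deforming contours then yields the factorization $G_\p(\pmb\varsigma,\tilde{\pmb\varsigma},t) = u_\infty'(\xx)\,\tilde e(\tilde\xx)^\top g(\xx,\tilde\xx,y-\tilde y,t)$, where $g$ is a Gaussian/error-function kernel with $|g| \lesssim (1+t)^{-1}e^{-(|\xx-\tilde\xx+\alpha t|^2 + |y-\tilde y|^2)/(M(1+t))}$, each $\tilde\xx$- or $y$-derivative on $g$ costing an extra $(1+t)^{-1/2}$, whereas $\tilde G$ and its $(\xx,y)$-derivatives up to the relevant order decay pointwise like $(1+t)^{-3/2}$ times the same Gaussian plus exponentially small terms. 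This is the planar analogue of the estimates in~\cite{JUN}, proved by the same stationary-phase and contour-shifting argument, now carried out with a two-dimensional Bloch frequency.

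The second step is the nonlinear decomposition, carried out in two coordinate systems simultaneously. In the unshifted frame $\mathring v := \tilde u - u_\infty$ solves $\partial_t\mathring v = \El\mathring v + \Non(\mathring v)$ with $\Non(\mathring v) = f(u_\infty+\mathring v) - f(u_\infty) - f'(u_\infty)\mathring v = \ord(\|\mathring v\|^2)$, and Duhamel's formula with $G = G_\p + \tilde G$ controls $\mathring v$ and $\partial_\xx\mathring v$ directly; the $G_\p$-term, however, only decays like $(1+t)^{-1}$, too slowly to be fed back through the quadratic nonlinearity. To gain an order I introduce a phase $\psi$ and the shifted variable $v(\xx,y,t) := \tilde u(\xx+\psi(\xx,y,t),y,t) - u_\infty(\xx)$, related to $\mathring v$ by $v = \mathring v(\xx+\psi,y,t) + \psi\,u_\infty'(\xx) + \ord(\psi^2)$, so that a good choice of $\psi$ cancels the slow $G_\p$-part of $\mathring v$. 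Concretely, I define $\psi$ and $v$ self-consistently through the coupled integral system
\[
\psi(\pmb\varsigma,t) = -\int_0^t\!\!\int_{\R^2} g(\xx,\tilde\xx,y-\tilde y,t-s)\,\tilde e(\tilde\xx)^\top \Non\big(\mathring v(\tilde{\pmb\varsigma},s)\big)\,d\tilde{\pmb\varsigma}\,ds + \Psi_0,
\]
together with the Duhamel formula for $v$ — carrying the $\tilde G$-part of the linear evolution applied to $v_0$ and the Duhamel integral of the residual nonlinearity $\Qroj(v,\psi,\nabla\psi,\partial_t\psi)$, which after the substitution no longer contains the critical-direction term — and the corresponding differentiated formulas for $\partial_\xx v$ and for $D_{\xx,y,t}^{\mathfrak b}\psi$ with $1\le|\mathfrak b|\le 3$; here one integrates by parts in $\tilde\xx$ to shift $\tilde\xx$-derivatives off $\Non$ and onto $g$, which is the device replacing the nonlinear damping estimate of~\cite{JONZ,JUN}. (Here $\Psi_0$ denotes the contribution of the initial datum $v_0$.)

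The third step closes the scheme by a continuity argument. Let $E(t)$ be the smallest constant for which the template bounds hold on $[0,t]$: $\|\mathring v\| \le E(t)(1+t)^{-1}\Theta$, $\|D_{\xx,y}^{\mathfrak c}\mathring v\| \le E(t)\,t^{-1/2}(1+t)^{-1/2}\Theta$, $\|v\| \le E(t)(1+t)^{-3/2}\Theta$, $\|\psi\| \le E(t)(1+t)^{-1}\Theta$, and $\|D_{\xx,y,t}^{\mathfrak b}\psi\| \le E(t)(1+t)^{-3/2}\Theta$ for $1\le|\mathfrak b|\le 3$, where $\Theta(\pmb\varsigma,t) := e^{-(|\xx+\alpha t|^2+y^2)/(M(1+t))}$. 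Substituting the template into the integral system, using the linear estimates and the elementary fact that a product of two such Gaussian weights reproduces a Gaussian weight of the same form while $\int_0^t (1+t-s)^{-a}(1+s)^{-b}\,ds$ is bounded by $(1+t)^{-\min(a,b)}$ — no logarithm appears, unlike in Theorem~\ref{maintheorem}, precisely because full localization pushes all the relevant exponents above the borderline — one obtains $E(t) \le C(E_0 + E(t)^2)$; since $E(0)\lesssim E_0$ and $E$ is continuous, $E(t)\le 2CE_0$ for all $t\ge 0$ once $E_0$ is small, which gives the assertion after reading off $\tilde u - u_\infty = \mathring v$ and $\tilde u(\xx+\psi,y,t) - u_\infty(\xx) = v$. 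The main obstacle is the self-consistency of this scheme rather than any single estimate: one must calibrate the template norms for $(\mathring v,\partial_\xx\mathring v, v, \psi,\nabla\psi,\partial_t\psi, D^2\psi, D^3\psi)$ so that every derivative of the nonlinearity entering the $v$- and $\psi$-equations — including those generated when $\xx$-derivatives hit $u_\infty(\xx+\psi)$, producing $u_\infty'$ times polynomials in $\psi$ and $\nabla\psi$ — is dominated by quantities already in the template, and so that all the resulting space--time convolution integrals genuinely close at the advertised rates, in particular that the $\tilde G$-contribution to $v$ really decays like $(1+t)^{-3/2}$ once the phase has absorbed the $(1+t)^{-1}$ part. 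Verifying this is the technical core of the proof.
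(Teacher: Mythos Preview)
Your overall strategy---pointwise Green's function decomposition $G=u_\infty'e+\GT$, two coordinate systems (unshifted $\mathring v=\vt$ and phase-shifted $v$), a template function carrying the Gaussian weight $\Theta$, and continuous induction on $E(t)\le C(E_0+E(t)^2)$---is exactly the paper's. The difference is in how you couple $\psi$ to the nonlinearity, and as written your scheme does not close at the stated rates.

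You define $\psi$ through the unshifted nonlinearity $\Non(\mathring v)=f(u_\infty+\mathring v)-f(u_\infty)-f'(u_\infty)\mathring v=\ord(\|\mathring v\|^2)$. With your template $\|\mathring v\|\le E(t)(1+s)^{-1}\Theta$, this gives $\|\Non(\mathring v)\|\lesssim E(t)^2(1+s)^{-2}\Theta$. Convolving with $e$, whose bound is $(1+t-s)^{-1}$ times the Gaussian, and using the integral identity~\eqref{intid42}, the Duhamel contribution to $\psi$ is controlled by
\[
\frac{E(t)^2}{1+t}\int_0^t\frac{1}{1+s}\,ds\ \sim\ E(t)^2\,\frac{\log(2+t)}{1+t},
\]
not $(1+t)^{-1}$; the same borderline produces $\log(2+t)(1+t)^{-3/2}$ in $D\psi$ and in the $\GT$-contribution to $v$. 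So your claim that ``no logarithm appears, because full localization pushes all exponents above the borderline'' fails precisely at this step: with $\Non(\mathring v)$ you are exactly at the borderline $a+b=2$ in $\int_0^t(1+t-s)^{-a}(1+s)^{-b}ds$.

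The paper avoids this by first deriving the perturbation equation~\eqref{pertbeq} for $v-u_\infty'\psi$ in the \emph{shifted} frame, so that the nonlinearity $\Non$ driving both the $v$- and the $\psi$-integral equations depends on $(v,\psi,D\psi)$ rather than on $\mathring v$; see~\eqref{nonlinearity} and~\eqref{defpsi}. Since the template has $\|v\|,\|D\psi\|\lesssim(1+s)^{-3/2}\Theta$, the relevant nonlinear terms decay like $(1+s)^{-3}\Theta$, and the integrand in the $\psi$-bound becomes $(1+s)^{-2}$, which is integrable. Thus the self-consistency you allude to must tie $\psi$ to the shifted nonlinearity, not to $\Non(\mathring v)$; once you make that change your outline coincides with the paper's proof. (A secondary point: the paper also tracks $v_\xx$ in the template and $\vt_{\xx\xx}$ as an auxiliary bound, since these enter the nonlinearities $\Non_j$ and the conversion estimate~\eqref{conversion}; you should make their role explicit.)
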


To prove Theorems~\ref{maintheorem} and~\ref{maintheorem2} we decompose the temporal Green's function associated with the parabolic operator $\partial_t - \El$ and we obtain pointwise bounds on each of the components of the Green's function. These pointwise Green's function estimates are given in Section~\ref{sec:pointwise}. In Section~\ref{secitscheme}, we perturb the planar wave-train solution $u(\xx,y,t) = u_\infty(\xx)$ to~\eqref{comove} and establish nonlinear equations for the perturbation and its derivatives. We track the perturbed solution in two different coordinate systems, where one of these coordinates is used to correct for the phase shift of the solution relative to the original wave train. This leads to a closed nonlinear iteration scheme. We apply the pointwise Green's function bounds to this nonlinear iteration scheme in Section~\ref{sec:nonstab} to prove Theorems~\ref{maintheorem} and~\ref{maintheorem2}.

\begin{rem}
The pointwise nonlinear stability analysis of wave-train solutions in one spatial dimension presented in~\cite{JUN,JUNNL} assumes only that perturbations are algebraically localized. More precisely, perturbations $v_0 \in H^2(\R,\R^n)$ need to satisfy only that $|v_0(x)| \leq E_0(1+|x|)^{-r}$ for $x \in \R$ with $0 < E_0 \ll 1$ and $r > 2$. The associated perturbed solution then exhibits both algebraic decay and diffusive Gaussian-like decay. By transferring the estimates in~\cite{JUN,JUNNL} to our setting, we expect that the exponential weights in our main results can be replaced by algebraic weights. However, since the estimates in~\cite{JUN,JUNNL} are technically quite involved and the main point of our analysis is the possibility of nonlocalized perturbations rather than their precise decay properties, we chose to work with exponential weights in this paper for clarity of exposition.
\end{rem}

\section{Green's function decomposition and pointwise estimates} \label{sec:pointwise}

In this section, we show that assumptions (D1)-(D3) on the spectrum of the linearization $\El$ of~\eqref{comove} about the wave train $u(\xx,y,t) = u_\infty(\xx)$ lead to a decomposition of the associated temporal Green's function. We obtain pointwise bounds on each of the components of the Green's function by following the (by now seminal) approach, which was introduced in~\cite{ZUH} and further extended in~\cite{JUN,OHZUM} to the setting of wave trains in one spatial dimension. The decomposition of and the pointwise estimates on the Green's function are the starting point of our nonlinear stability analysis, which is performed in the~\S\ref{sec:nonstab}.

\subsection{Consequences of spectral assumptions} \label{specasscon}

We apply the implicit function theorem to expand the spectrum of $\El$ about the origin. The obtained control on the spectrum is crucial for the decomposition of the temporal Green's function.
\begin{lem} \label{Lem1}
Assume (D1). We complexify ${\pmb \nu}$ and consider the family $L_{\pmb \nu}, {\pmb \nu} \in \C^2$ of operators on $L_\per^2([0,1],\C^n)$ given by~\eqref{Bloch2}. There exists a neighborhood $U \subset \C^2$ of $0$ and an analytic function $\lambda_0 \colon U \to \C$ such that the spectrum of $L_{\pmb \nu}$ in $\Re(\lambda) \geq -\eta$ consists of the simple eigenvalue $\lambda_0({\pmb \nu})$ only. In addition, we have the expansion
\begin{align}\lambda_0({\pmb \nu}) &= i\alpha\nu_x - \theta \nu_x^2 - d_\perp \nu_y^2 + \h({\pmb \nu}), \qquad {\pmb \nu} = (\nu_x,\nu_y) \in U, \label{eigenexp}
\end{align}
with residual $\h \colon U \to \C$ satisfying $|\h({\pmb \nu})| \leq C|{\pmb \nu}|^3$ for some constant $C>0$ and coefficients
\begin{align*}
\alpha &= 2k^2\langle \tilde{u}_\ad, D  \tilde{u}_\infty''\rangle_2 + \omega \in \R, \quad d_\perp = \langle \tilde{u}_\ad, D  \tilde{u}_\infty'\rangle_2 \in \R, \quad \theta \in \R, \end{align*}
where $\tilde{u}_\infty \in \ker(L_0)$ is the restriction of the $1$-periodic wave train $u_\infty$ to $[0,1]$, $\tilde{u}_\ad$ is contained in $\ker(L_0^*)$ such that $\langle \tilde{u}_\ad, \tilde{u}_\infty'\rangle_2 = 1$ and $\langle \cdot,\cdot \rangle_2$ denotes the $L_\per^2([0,1],\C^n)$-inner product.

Moreover, for all ${\pmb \nu} \in U$ the kernels of $L_{\pmb \nu} - \lambda_0({\pmb \nu})$ and its adjoint $\left(L_{\pmb \nu} - \lambda_0({\pmb \nu})\right)^*$ are spanned by analytic eigenfunctions $q \colon U \to L_\per^2([0,1],\C^n)$ and $q_\ad \colon U \to L_\per^2([0,1],\C^n)$, respectively, satisfying $\langle q({\pmb \nu}), q_\ad({\pmb \nu})\rangle_2 = 1$, $q(0) = \tilde{u}_\infty'$ and $q_\ad(0) = \tilde{u}_\ad$. Finally, the associated derivative maps $U \to L^2_\per([0,1],\C^n)$ given by ${\pmb \nu} \mapsto \partial_\xx q({\pmb \nu})$ and ${\pmb \nu} \mapsto \partial_\xx q_\ad({\pmb \nu})$ are also analytic.
\end{lem}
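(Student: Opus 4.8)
The plan is to apply analytic perturbation theory (à la Kato) to the family $L_{\pmb\nu}$, exploiting that $\lambda=0$ is a simple, isolated eigenvalue of $L_0$ by (D1). First I would note that each $L_{\pmb\nu}$, ${\pmb\nu}\in\C^2$, is an analytic family of type (A): the coefficients of $L_{\pmb\nu}$ in \eqref{Bloch2} depend polynomially on $\nu_x,\nu_y$, the domain $H^2_\per([0,1],\C^n)$ is ${\pmb\nu}$-independent, and $L_{\pmb\nu}$ has compact resolvent. By (D1), a small circle $\Gamma\subset\C$ around $0$ separates the eigenvalue $0$ from the rest of $\sigma(L_0)$, and the rest lies in $\Re\lambda<-\eta$. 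Shrinking to a neighborhood $U\subset\C^2$ of $0$, the spectral (Riesz) projection $P_{\pmb\nu}=\frac{1}{2\pi i}\oint_\Gamma (\zeta-L_{\pmb\nu})^{-1}\,d\zeta$ is analytic in ${\pmb\nu}$, has constant rank one, and the part of $\sigma(L_{\pmb\nu})$ inside $\Gamma$ is a single simple eigenvalue $\lambda_0({\pmb\nu})$ with $\lambda_0(0)=0$; all other spectrum stays in $\Re\lambda\le-\eta$ by continuity of the resolvent. Analyticity of $\lambda_0$ follows since $\lambda_0({\pmb\nu})=\operatorname{tr}(L_{\pmb\nu}P_{\pmb\nu})$ (a trace of an analytic finite-rank operator), and the eigenfunctions are obtained by normalizing $q({\pmb\nu})=P_{\pmb\nu}\tilde u_\infty'/\langle \tilde u_\infty', P_{\pmb\nu}^*\tilde u_\ad\rangle_2$, which is analytic and nonvanishing near $0$; similarly $q_\ad({\pmb\nu})$ comes from the adjoint projection $P_{\pmb\nu}^*$, rescaled so that $\langle q({\pmb\nu}),q_\ad({\pmb\nu})\rangle_2=1$. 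That $\partial_\xx q$ and $\partial_\xx q_\ad$ are analytic into $L^2_\per$ is immediate from elliptic regularity and the analyticity of $q,q_\ad$ into $H^2_\per\hookrightarrow H^1_\per$.

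It remains to compute the Taylor expansion \eqref{eigenexp}. Write $L_{\pmb\nu}=L_0+\nu_x L^{(1)}+\nu_x^2 L^{(2)}-\nu_y^2 D$, where, from \eqref{Bloch2}, $L^{(1)}=2ik^2\partial_\xx+i\omega$ and $L^{(2)}=-k^2$ (acting as scalars/multiplication where appropriate). Differentiating the eigenvalue relation $L_{\pmb\nu}q({\pmb\nu})=\lambda_0({\pmb\nu})q({\pmb\nu})$ and pairing against $q_\ad({\pmb\nu})$ kills the terms with $L_0-\lambda_0$ acting on derivatives of $q$ (using $\langle(L_0-\lambda_0({\pmb\nu}))v,q_\ad\rangle_2=\langle v,(L_{\pmb\nu}-\lambda_0({\pmb\nu}))^*q_\ad\rangle_2=0$ only at ${\pmb\nu}$; more simply evaluate all derivative identities at ${\pmb\nu}=0$ where $\lambda_0(0)=0$, $q(0)=\tilde u_\infty'=\tilde u_\infty''$ in our notation is $u_\infty''$, $q_\ad(0)=\tilde u_\ad$). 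This gives $\partial_{\nu_x}\lambda_0(0)=\langle q_\ad, L^{(1)}\tilde u_\infty'\rangle_2 = 2ik^2\langle\tilde u_\ad,D\tilde u_\infty''\rangle_2 + i\omega\langle\tilde u_\ad,\tilde u_\infty'\rangle_2 = i\alpha$ after accounting for $D$ in $L^{(1)}$; the $\nu_y^2$-coefficient is $-\langle\tilde u_\ad, D\tilde u_\infty'\rangle_2 = -d_\perp$; the $\nu_x^2$-coefficient defines the real number $-\theta$ (it involves $L^{(2)}$ and the second-order correction $\partial_{\nu_x}q(0)$, solved from the first-order relation via the bordered/reduced resolvent of $L_0$). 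Reality of $\alpha,d_\perp,\theta$ follows from the symmetry $\sigma(\El)=\overline{\sigma(\El)}$ of the real operator, i.e. $\overline{\lambda_0({\pmb\nu})}=\lambda_0(-\bar{\pmb\nu})$ for real-analytic coefficient dependence, forcing the pure imaginary first-order term and real second-order terms; the mixed $\nu_x\nu_y$ term vanishes because it would be imaginary yet is forced real, hence zero. The cubic remainder bound $|\h({\pmb\nu})|\le C|{\pmb\nu}|^3$ is Taylor's theorem applied to the analytic $\lambda_0$ on a fixed polydisc in $U$.

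The main obstacle is not any single computation but keeping the bookkeeping honest: correctly transporting the factor $D$ through the $(\partial_\xx+i\nu_x)^2$ and $\omega(\partial_\xx+i\nu_x)$ terms so that $\alpha$ and $d_\perp$ come out exactly as stated, and justifying that the normalization $\langle q({\pmb\nu}),q_\ad({\pmb\nu})\rangle_2=1$ can be maintained analytically (it can, since the unnormalized pairing is analytic and equals $1$ at ${\pmb\nu}=0$, hence is nonzero near $0$). A secondary point worth a remark is that one should use the ${\pmb\nu}$-independence of the domain to invoke type-(A) analyticity; since $D$ is symmetric positive-definite the operators are sectorial and elliptic (as already noted in the text), so no further care with domains is needed.
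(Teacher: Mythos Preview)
Your approach is essentially the same as the paper's: both invoke Kato-type analytic perturbation theory for the simple eigenvalue $0$ of $L_0$, then differentiate the eigenvalue relation $L_{\pmb\nu}q({\pmb\nu})=\lambda_0({\pmb\nu})q({\pmb\nu})$ and pair with $q_\ad$ to read off the Taylor coefficients, with reality of $\theta$ coming from the conjugation symmetry of the real operator $\El$. The paper gets analyticity of $\partial_\xx q,\partial_\xx q_\ad$ by rewriting the eigenvalue problem as a first-order system in $(u,u_\xx)$, while you use elliptic regularity; both are fine.

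One genuine slip: your argument that the mixed coefficient $\partial_{\nu_x\nu_y}\lambda_0(0)$ vanishes ``because it would be imaginary yet is forced real'' is incorrect. The conjugation symmetry $\overline{\lambda_0({\pmb\nu})}=\lambda_0(-\bar{\pmb\nu})$ forces the second-order coefficients (including the mixed one) to be \emph{real}, not imaginary, so no contradiction arises. The vanishing instead follows from the fact that $L_{\pmb\nu}$ depends on $\nu_y$ only through $\nu_y^2$, so $\lambda_0(\nu_x,\nu_y)=\lambda_0(\nu_x,-\nu_y)$ and all odd-in-$\nu_y$ coefficients are zero; alternatively, the paper's direct computation works (note $\partial_{\nu_y}L_{\pmb\nu}|_0=0$ forces $\partial_{\nu_y}q(0)\in\ker L_0$, and the resulting contributions cancel). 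Also tidy up the definitions of $L^{(1)},L^{(2)}$: from~\eqref{Bloch2} one has $L^{(1)}=2ik^2 D\partial_\xx+i\omega$ and $L^{(2)}=-k^2 D$, with the $D$ present in the derivative term.
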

\begin{proof} Since (D1) is satisfied,  it follows from standard perturbation theory~\cite{KAT} that for ${\pmb \nu}$ in some neighborhood $U \subset \C^2$ of $0$ the spectrum of $L_{\pmb \nu}$ in $\Re(\lambda) \geq -\eta$ consists of a simple eigenvalue $\lambda_0({\pmb \nu})$ only, which depends analytically on ${\pmb \nu}$. The fact that the associated eigenvectors $q({\pmb \nu})$ and $q_\ad({\pmb \nu})$ and their derivatives $\partial_\xx q({\pmb \nu})$ and $\partial_\xx q_\ad({\pmb \nu})$ are also analytic in ${\pmb \nu}$ follows by writing the eigenvalue problem $(L_{\pmb \nu} - \lambda) u = 0$ as a first order system $(\partial_\xx - A(\xx,\lambda))\phi = 0$ with $\phi = (u,u_\xx)$ and applying perturbation results from~\cite{KAT}.

Since $\lambda_0({\pmb \nu})$ is algebraically simple and $L_0$ is Fredholm of index $0$ by (D1), the inner product of $q({\pmb \nu})$ and $q_\ad({\pmb \nu})$ cannot vanish by the Fredholm alternative for any ${\pmb \nu} \in U$. Therefore, we can assume without loss of generality that $\langle q({\pmb \nu}), q_\ad({\pmb \nu})\rangle_2 = 1$ and $q(0) = \tilde{u}_\infty'$ for any ${\pmb \nu} \in U$. We define $\tilde{u}_\ad = q_\ad(0)$. The derivative $\partial_{\nu_x} q$ satisfies the equation
\begin{align*}
\left(L_{\pmb \nu} - \lambda_0({\pmb \nu})\right) [\partial_{\nu_x} q]({\pmb \nu}) = [\partial_{\nu_x} \lambda_0]({\pmb \nu}) q({\pmb \nu}) - 2k^2 i D (\partial_\xx + i \nu_x) q({\pmb \nu}) - \omega i q({\pmb \nu}), \quad {\pmb \nu} \in U.
\end{align*}
Taking the inner product with $q_{\ad}({\pmb \nu})$ on both sides and evaluating at ${\pmb \nu} = 0$ yields $[\partial_{\nu_x} \lambda_0](0) = \alpha i$. Similarly, one obtains $[\partial_{\nu_y} \lambda_0](0) = 0$, $[\partial_{\nu_x\nu_y} \lambda_0](0) = 0$ and $[\partial_{\nu_y\nu_y} \lambda_0](0) = -2d_\perp$. Since $\El$ is a real operator, its spectrum in $\C$ is symmetric in the real axis. Therefore, the decomposition~\eqref{specdecomp} yields that $\theta := -[\partial_{\nu_x\nu_x}\lambda_0](0)/2$ must be real. The expansion~\eqref{eigenexp} now follows by analyticity of $\lambda_0 \colon U \to \C$.
\end{proof}
The spectral control about the origin obtained in Lemma~\ref{Lem1} reduces the verification of assumption~(D2) to checking the signs of two Melnikov-type integrals.
\begin{cor}
Assume (D1) is satisfied, then assumption (D2) is satisfied if and only if $d_\perp,\theta > 0$.
\end{cor}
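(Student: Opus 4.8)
The plan is to reduce assumption~(D2) to a pointwise inequality for the analytic eigenvalue $\lambda_0$ furnished by Lemma~\ref{Lem1}, and then to read off the claim from the expansion~\eqref{eigenexp}. Fix the neighborhood $U \subset \C^2$ and the constant $\eta > 0$ provided by (D1) and Lemma~\ref{Lem1}, and pick $\epsilon_0 \in (0,1]$ small enough that $\{{\pmb \nu} \in \Omega : |{\pmb \nu}| \leq \epsilon_0\} \subset U$. For real ${\pmb \nu}$ with $|{\pmb \nu}| \leq \epsilon_0$, the spectrum of $L_{\pmb \nu}$ in $\Re(\lambda) \geq -\eta$ is exactly the simple eigenvalue $\lambda_0({\pmb \nu})$; since $|{\pmb \nu}|^2 \leq 1$, the rest of $\sigma(L_{\pmb \nu})$ automatically satisfies $\Re(\lambda) < -\eta \leq -\eta |{\pmb \nu}|^2$. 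Hence (D2) holds for some pair $(\eta,\epsilon)$ if and only if $\Re(\lambda_0({\pmb \nu})) \leq -\eta' |{\pmb \nu}|^2$ for all real ${\pmb \nu}$ with $|{\pmb \nu}|$ sufficiently small and some $\eta' > 0$. Since $\alpha, \theta, d_\perp \in \R$, the expansion~\eqref{eigenexp} gives, for real ${\pmb \nu} = (\nu_x,\nu_y)$,
\[ \Re(\lambda_0({\pmb \nu})) = -\theta \nu_x^2 - d_\perp \nu_y^2 + \Re(\h({\pmb \nu})), \qquad |\Re(\h({\pmb \nu}))| \leq C |{\pmb \nu}|^3 . \]

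For the ``if'' direction I would argue as follows. Assume $\theta, d_\perp > 0$ and set $c := \min\{\theta, d_\perp\} > 0$, so that $\theta \nu_x^2 + d_\perp \nu_y^2 \geq c|{\pmb \nu}|^2$. Then $\Re(\lambda_0({\pmb \nu})) \leq -c|{\pmb \nu}|^2 + C|{\pmb \nu}|^3 = -|{\pmb \nu}|^2 (c - C|{\pmb \nu}|) \leq -\tfrac{c}{2}|{\pmb \nu}|^2$ whenever $|{\pmb \nu}| \leq c/(2C)$. Choosing the new constants $\eta := \min\{\eta, c/2\}$ and $\epsilon := \min\{\epsilon_0, c/(2C)\}$, and combining with the observation on the remaining spectrum above, yields (D2).

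For the ``only if'' direction I would prove the contrapositive, using that the prohibited region in~(D2) is already reached along a coordinate axis once one coefficient fails to be positive. If $\theta \leq 0$, test ${\pmb \nu} = (\nu_x, 0) \in \Omega$: then $\Re(\lambda_0(\nu_x,0)) = -\theta \nu_x^2 + \Re(\h(\nu_x,0)) \geq -C|\nu_x|^3$, which exceeds $-\eta \nu_x^2 = -\eta |{\pmb \nu}|^2$ as soon as $0 < |\nu_x| < \eta/C$; since such $\nu_x$ lies in every neighborhood of $0$, assumption~(D2) cannot hold for any choice of $\eta, \epsilon > 0$. Symmetrically, if $d_\perp \leq 0$, test ${\pmb \nu} = (0,\nu_y)$ to obtain $\Re(\lambda_0(0,\nu_y)) \geq -C|\nu_y|^3 > -\eta|{\pmb \nu}|^2$ for small $\nu_y$. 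This exhausts the cases and establishes the equivalence.

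There is no genuinely hard step here: the whole content sits in Lemma~\ref{Lem1}. The only points requiring a little care are keeping track of the interplay between the decay rate $\eta$ and the radius $\epsilon$ in~(D2) --- both of which may be shrunk freely --- and noting that (D1), via Lemma~\ref{Lem1}, already confines all spectrum other than $\lambda_0({\pmb \nu})$ to $\Re(\lambda) < -\eta$, so that only the quadratic form $\theta \nu_x^2 + d_\perp \nu_y^2$ can obstruct~(D2).
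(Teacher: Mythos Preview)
Your proposal is correct and follows the natural route implicit in the paper, which states the corollary without proof as an immediate consequence of Lemma~\ref{Lem1}. You have carefully filled in the details---reducing (D2) to the sign of the quadratic form $\theta\nu_x^2+d_\perp\nu_y^2$ via the expansion~\eqref{eigenexp} and handling the cubic remainder---exactly as intended.
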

Notice that the linearization of the reaction-diffusion system~\eqref{1D} in one spatial dimension about the wave train solution $u(x,t)=u_\infty(kx-\omega t)$ is given by the operator $\El_0$ -- see~\eqref{Fourier}. Spectral stability of $u(x,t) = u_\infty(kx-\omega t)$ as a solution to~\eqref{1D}, in the sense of~\cite{JONZ,JUN,SAN3}, entails that $0$ is a simple eigenvalue of $L_0$ and that the spectrum of $\El_0$ lies to the left of the imaginary axis, except for a quadratic touching at the origin. This leads to the following result.
\begin{cor} 
Assume the wave train $u(x,t)=u_\infty(kx-\omega t)$ is spectrally stable as solution to~\eqref{1D}, in the sense of~\cite{JONZ,JUN,SAN3}. If it holds $d_\perp > 0$, then (D1)-(D2) are satisfied. Moreover, if $D = \I_n$, then the planar wave train $u(x,y,t)=u_\infty(kx-\omega t)$ is spectrally stable as solution to~\eqref{2D}.
\end{cor}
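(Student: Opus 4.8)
The plan is to translate the assumed one‑dimensional spectral stability into control on the Bloch operators $L_{\pmb\nu}$ for all ${\pmb\nu}\in\Omega$, using throughout the identity $L_{(\nu_x,\nu_y)} = L_{(\nu_x,0)} - D\nu_y^2$, which is immediate from~\eqref{Bloch2}, together with the expansion~\eqref{eigenexp} of Lemma~\ref{Lem1}. First I would unpack the hypothesis: spectral stability of the wave train as a solution of~\eqref{1D} means, via the Bloch decomposition of $\El_0$ analogous to~\eqref{specdecomp} (namely $\sigma(\El_0) = \bigcup_{\nu_x\in[-\pi,\pi]}\sigma(L_{(\nu_x,0)})$, so in particular $\sigma(L_0)=\sigma(L_{(0,0)})\subseteq\sigma(\El_0)$), that $0$ is a simple eigenvalue of $L_0$, that $\sigma(\El_0)\subseteq\{\Re\lambda\le0\}$ meets the imaginary axis only at the origin, and that this touching is genuinely quadratic — which by~\eqref{eigenexp} forces the coefficient $\theta$ to be strictly positive, since if $\theta$ vanished then $\Re\lambda_0(\nu_x,0)$ would change sign near $\nu_x=0$ or the tangency would be of higher order.

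Next I would establish~(D1): since $L_0$ is sectorial with compact resolvent, its spectrum is discrete and accumulates only at $-\infty$, so $\sigma(L_0)\cap\{\Re\lambda\ge-1\}$ is finite; it lies in $\{\Re\lambda\le0\}$ and meets the imaginary axis only at the simple eigenvalue $0$, hence all other points have strictly negative real part and there is $\eta>0$ with $\sigma(L_0)\cap\{\Re\lambda\ge-\eta\}=\{0\}$. Then, for~(D2) under the hypothesis $d_\perp>0$: Lemma~\ref{Lem1} shows that for $|{\pmb\nu}|$ small the only spectrum of $L_{\pmb\nu}$ in $\{\Re\lambda\ge-\eta\}$ is $\lambda_0({\pmb\nu})$, and~\eqref{eigenexp} gives $\Re\lambda_0({\pmb\nu}) = -\theta\nu_x^2 - d_\perp\nu_y^2 + \Re\h({\pmb\nu})$ with $|\h({\pmb\nu})|\le C|{\pmb\nu}|^3$, so $\Re\lambda_0({\pmb\nu})\le-\tfrac12\min(\theta,d_\perp)|{\pmb\nu}|^2$ once $|{\pmb\nu}|$ is small enough; combined with the gap for the rest of the spectrum this yields~(D2) after shrinking $\eta$ and choosing $\epsilon$ accordingly — this is exactly the content of the preceding corollary characterising~(D2) by $\theta,d_\perp>0$.

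For the final assertion I would observe that both $n=1$ and $D=\I_n$ give $D=c\,\I_n$ for some scalar $c>0$, whence $d_\perp = c\,\langle\tilde{u}_\ad,\tilde{u}_\infty'\rangle_2 = c>0$ automatically, so~(D1)–(D2) already hold by the above. The remaining point is~(D3): now $L_{(\nu_x,\nu_y)} = L_{(\nu_x,0)} - c\nu_y^2$, so $\sigma(L_{(\nu_x,\nu_y)}) = \sigma(L_{(\nu_x,0)}) - c\nu_y^2$ and $\max\Re\sigma(L_{(\nu_x,\nu_y)}) = \Lambda(\nu_x) - c\nu_y^2$, where $\Lambda(\nu_x):=\max\Re\sigma(L_{(\nu_x,0)})$. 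One‑dimensional stability gives $\Lambda\le0$ on $[-\pi,\pi]$, $\Lambda(\nu_x)\le-\theta'\nu_x^2$ near $\nu_x=0$ (quadratic touching with $\theta>0$), and $\Lambda(\nu_x)<0$ for $0<|\nu_x|\le\pi$; since $\Lambda$ is upper semicontinuous its supremum over any annulus $\delta\le|\nu_x|\le\pi$ is strictly negative, and combining the two regimes produces $\theta''>0$ with $\Lambda(\nu_x)\le-\theta''\nu_x^2$ on all of $[-\pi,\pi]$. Then $\max\Re\sigma(L_{\pmb\nu})\le-\theta''\nu_x^2-c\nu_y^2\le-\min(\theta'',c)|{\pmb\nu}|^2$ for every ${\pmb\nu}\in\Omega$, which delivers both~(D2) and~(D3) (e.g.\ with $\epsilon=1$ and $\eta=\min(\theta'',c)$, shrunk if necessary to also accommodate the gap from~(D1)); hence the planar wave train is spectrally stable. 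I would add that when $n=1$ the statement is in fact vacuous, since $u_\infty'$ changes sign so Sturm–Liouville theory forbids $0$ from being the rightmost eigenvalue of $L_0$; the substance of the claim is the case $D=\I_n$.

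The step I expect to be the only genuinely delicate one is the upgrade from pointwise spectral stability ($\Lambda(\nu_x)<0$ for $\nu_x\ne0$) to a uniform spectral gap away from $\nu_x=0$: this rests on the upper semicontinuity of spectra for analytic families of sectorial operators with compact resolvent — the spectrum in a fixed right half‑plane consists of finitely many eigenvalues depending continuously on $\nu_x$ (cf.~\cite{KAT}) — and it is in any case already built into the notion of one‑dimensional spectral stability used in~\cite{JONZ,JUN,SAN3}. Everything else is routine bookkeeping with~\eqref{eigenexp} and Lemma~\ref{Lem1}.
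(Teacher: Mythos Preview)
Your argument is correct. The paper states this corollary without proof, treating it as an immediate consequence of Lemma~\ref{Lem1}, the preceding corollary characterising (D2) via $\theta,d_\perp>0$, and the definitions; your write-up supplies exactly the details the authors leave implicit, and the key observation you isolate --- that $L_{(\nu_x,\nu_y)}=L_{(\nu_x,0)}-D\nu_y^2$, so for $D=c\,\I_n$ the spectrum simply shifts by $-c\nu_y^2$ --- is precisely what reduces the planar spectral conditions to the one-dimensional ones and makes this a corollary rather than a result requiring its own argument. Your closing remark that the case $n=1$ is vacuous is also in line with the paper's own observation at the end of \S\ref{s2.1}.
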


\subsection{Pointwise Green's function estimates}

We follow~\cite{JUN,OHZUM} and decompose the temporal Green's function associated with the parabolic operator $\partial_t - \El$ in a translational mode and a residual. We obtain pointwise estimates for each of these components of the Green's function.

\begin{theo} \label{theope}
Assume (D1)-(D3) hold true. Let $\chi \colon \R_{\geq 0} \to [0,1]$ be a smooth cut-off function such that $\chi(t) = 0$ for $0 \leq t \leq 1$ and $\chi(t) = 1$ for $t \geq 2$. The temporal Green's function $G(\xx,\xt,y,t)$ associated with the operator $\partial_t - \El$ in~\eqref{lin} can be decomposed as
\begin{align}
\begin{split}
G(\xx,\xt,y,t) &= u_\infty'(\xx)e(\xx,\xt,y,t) + \widetilde{G}(\xx,\xt,y,t),\\
e(\xx,\xt,y,t) &= \frac{\chi(t)}{4 \pi t\sqrt{d_\perp \theta}}e^{-\frac{|\xx-\xt+\alpha t|^2}{4\theta t} - \frac{|y|^2}{4d_\perp t}} u_{\ad} (\xt)^*,
\end{split} \quad \xx,\xt,y \in \R, t \geq 0, \label{decomp}
\end{align}
where $\alpha,\theta$ and $d_\perp$ are as in Lemma~\ref{Lem1} and $u_\ad \colon \R \to \R^n$ is the periodic extension of the eigenfunction $\tilde{u}_\ad \in L^2_\per([0,1],\R^n)$ of the adjoint $L_0^*$ (see also Lemma~\ref{Lem1}). For each $j \in \Z_{\geq 0}$ and multi-indices $\mathfrak{a}, \mathfrak{b} \in \Z_{\geq 0}^3$ with $|\mathfrak{a}| = 1$ and $|\mathfrak{b}| \geq 0$, there exists $C,M > 1$ such that we have the pointwise estimates
\begin{align}
\begin{split}
\left\|\GT(\xx,\xt,y,t)\right\| &\leq Ct^{-1}(1+t)^{-\frac{1}{2}}e^{-\frac{|\xx-\xt + \alpha t|^2 + |y|^2}{Mt}},\\
\left\|D_{\xx,\xt,y}^\mathfrak{a} \GT(\xx,\xt,y,t)\right\| &\leq Ct^{-\frac{3}{2}}e^{-\frac{|\xx-\xt + \alpha t|^2 + |y|^2}{Mt}},\\
\left\|D_{\xx,y,t}^\mathfrak{b} \partial_\xt^j e(\xx,\xt,y,t)\right\| &\leq C(1+t)^{-1-\frac{|\mathfrak{b}|}{2}}e^{-\frac{|\xx-\xt + \alpha t|^2 + |y|^2}{Mt}},
\end{split} \qquad \xx,\xt,y \in \R, t > 0. \label{pointwiseestimates}
\end{align}
\end{theo}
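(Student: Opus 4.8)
The plan is to follow the inverse-Laplace-transform/Bloch-decomposition scheme of \cite{ZUH}, as extended to wave trains in \cite{OHZUM,JUN}, now carried out in two space dimensions. Since $y$ enters \eqref{lin} through a clean Euclidean Laplacian, it is convenient to handle it first: the temporal Green's function of $\partial_t-\El$ satisfies
\[
G(\xx,\xt,y,t)=\frac{1}{2\pi}\int_{\R}e^{i\nu_y y}\,G_{\nu_y}(\xx,\xt,t)\,d\nu_y,
\]
where $G_{\nu_y}$ is the kernel of $e^{\El_{\nu_y}t}$ on $L^2(\R,\C^n)$ and $\El_{\nu_y}$ is the one-dimensional wave-train linearization \eqref{Fourier}, i.e.\ a $1$-periodic operator shifted by the damping term $-D\nu_y^2$. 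Applying the Bloch transform in $\xx$ and then the inverse Laplace transform to $\El_{\nu_y}$ yields, up to a fixed normalization,
\[
G_{\nu_y}(\xx,\xt,t)=\frac{1}{2\pi}\int_{-\pi}^{\pi}e^{i\nu_x(\xx-\xt)}\,\frac{1}{2\pi i}\oint_{\Gamma}e^{\lambda t}\,(\lambda-L_{\pmb\nu})^{-1}(\xx,\xt)\,d\lambda\,d\nu_x ,
\]
with $\Gamma$ a suitable sectorial contour and $(\lambda-L_{\pmb\nu})^{-1}(\xx,\xt)$ the periodic resolvent kernel. Equivalently one may work directly with the full two-dimensional Fourier--Bloch representation \eqref{Bloch}; the constants will not depend on the order of the two transforms.

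Next I would split the ${\pmb\nu}$-integral into a low-frequency part $|{\pmb\nu}|\le\epsilon$ and a high-frequency part $|{\pmb\nu}|\ge\epsilon$ by a smooth cut-off, and on the low-frequency part use the rank-one spectral projection $\Pi({\pmb\nu})=q({\pmb\nu})\langle q_\ad({\pmb\nu}),\cdot\rangle_2$ from Lemma~\ref{Lem1} to write $e^{L_{\pmb\nu}t}=e^{\lambda_0({\pmb\nu})t}\Pi({\pmb\nu})+e^{L_{\pmb\nu}t}(\I-\Pi({\pmb\nu}))$. By (D2)--(D3) the semigroup factors on the complementary low-frequency part and on the high-frequency part are bounded by $Ce^{-\eta t/2}$ on their respective regions, with the stronger bound $Ce^{-c(1+\nu_y^2)t}$ where $|\nu_y|$ is large, by sectoriality of $\El_{\nu_y}$. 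Into $e(\xx,\xt,y,t)$ I would put the leading piece of the critical contribution: inserting the expansions $\lambda_0({\pmb\nu})=i\alpha\nu_x-\theta\nu_x^2-d_\perp\nu_y^2+\h({\pmb\nu})$, $q({\pmb\nu},\cdot)=\tilde u_\infty'+\ord(|{\pmb\nu}|)$ and $q_\ad({\pmb\nu},\cdot)=\tilde u_\ad+\ord(|{\pmb\nu}|)$ from Lemma~\ref{Lem1}, discarding the residual $\h$, and extending the frequency integral to $\R^2$, the Gaussian integral $\int_{\R^2}e^{i\nu_x(\xx-\xt)+i\nu_y y+i\alpha\nu_x t-\theta\nu_x^2 t-d_\perp\nu_y^2 t}\,d{\pmb\nu}$ produces exactly the prefactor $\frac{1}{4\pi t\sqrt{d_\perp\theta}}e^{-|\xx-\xt+\alpha t|^2/(4\theta t)-|y|^2/(4d_\perp t)}$ of \eqref{decomp}, so that this leading piece is $u_\infty'(\xx)e(\xx,\xt,y,t)$ once the temporal cut-off $\chi$ is reinstated to absorb the (smooth, super-algebraically small as $t\to\infty$) small-time discrepancy. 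Everything else — the $\ord(|{\pmb\nu}|)$ corrections of $q,q_\ad$, the contribution of $\h$, the truncation error from $|{\pmb\nu}|\ge\epsilon$, the complementary low-frequency and the high-frequency parts, and the $\chi$-discrepancy — is collected into $\widetilde{G}$.

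It then remains to estimate $\widetilde{G}$ and $D^{\mathfrak{a}}_{\xx,\xt,y}\widetilde{G}$ pointwise. For each constituent piece I would obtain the Gaussian weight by deforming the $\nu_x$- and $\nu_y$-contours into the complex plane, $\nu_x\mapsto\nu_x+i\beta$ with $\beta$ a small multiple of $(\xx-\xt+\alpha t)/t$ and similarly for $\nu_y$ with $y/t$; this is legitimate on the low-frequency region by the analyticity of $\lambda_0,q,q_\ad$ and their $\xx$-derivatives (Lemma~\ref{Lem1}), and on the remaining regions by sectorial resolvent analyticity, and it produces the factor $e^{-(|\xx-\xt+\alpha t|^2+|y|^2)/(Mt)}$. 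The temporal rates follow from the elementary estimate that multiplying a Gaussian $e^{-c|{\pmb\nu}|^2 t}$ by $|{\pmb\nu}|^m$ before integrating over ${\pmb\nu}\in\R^2$ costs a factor $t^{-m/2}$: the $\ord(|{\pmb\nu}|)$- and $\ord(|{\pmb\nu}|^3)$-corrections each gain at least one power $t^{-1/2}$ over the $t^{-1}$ rate of $e$, giving $\|\widetilde{G}\|\le Ct^{-1}(1+t)^{-1/2}e^{-(|\xx-\xt+\alpha t|^2+|y|^2)/(Mt)}$ for $t\ge 2$; for $t\in(0,2]$, where $\chi$ vanishes and the spectral decomposition is uninformative, I would instead compare $G$ with the constant-coefficient heat kernel of $\partial_t-D(k^2\partial_{\xx\xx}+\partial_{yy})$ via the standard parametrix construction, which supplies the Gaussian bounds $t^{-1}$ and $t^{-3/2}$ for the kernel and its first derivatives in two space dimensions. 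For the derivative bounds one notes that each $D^{\mathfrak{a}}$ either pulls down one further power of ${\pmb\nu}$ (when it hits the oscillatory factor) or leaves the order of the ${\pmb\nu}$-expansion unchanged (when it hits $q$ or $q_\ad$, whose $\xx$-derivatives are again analytic), so that the slowly decaying contributions such as $u_\infty''(\xx)e$ produced by differentiating the extracted term are exactly cancelled and $\|D^{\mathfrak{a}}_{\xx,\xt,y}\widetilde{G}\|\le Ct^{-3/2}e^{-(|\xx-\xt+\alpha t|^2+|y|^2)/(Mt)}$. The complementary low-frequency and the high-frequency pieces, carrying the factors $e^{-\eta t/2}$ and $e^{-c(1+\nu_y^2)t}$, are exponentially small in $t$ and hence harmless, and the third line of \eqref{pointwiseestimates} follows by differentiating the explicit Gaussian formula for $e$ and using that $\xt$-derivatives hit only the smooth periodic factor $u_\ad(\xt)^*$ and the Gaussian exponent.

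The main obstacle is the pointwise bookkeeping in the last step: the $\nu_x$-, $\nu_y$- and $\lambda$-contours must be deformed simultaneously, with the deformation amounts tied to the spatial variables, while staying inside the only locally available domains of analyticity furnished by Lemma~\ref{Lem1} and by sectoriality, and one must track carefully the cancellations that give $\widetilde{G}$ and its derivatives their improved rates. Specific to the planar setting — and absent from the one-dimensional analyses of \cite{JUN,OHZUM} — is that the transverse frequency $\nu_y$ ranges over the noncompact set $\R$, so one needs resolvent and semigroup bounds for $L_{\pmb\nu}$ (equivalently $\El_{\nu_y}$) that are uniform in ${\pmb\nu}$ and that decay as $|\nu_y|\to\infty$, together with the verification that the resulting $\nu_y$-integral converges with the stated rates. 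The transverse damping $-D\nu_y^2$ makes this essentially routine once the framework is set up, but it is where the genuinely new work lies relative to the one-dimensional case.
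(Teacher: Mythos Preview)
Your outline is essentially the paper's own strategy --- Fourier--Bloch in $(\nu_x,\nu_y)$, rank-one projection $\Pi({\pmb\nu})$ on the low-frequency ball, extraction of the explicit Gaussian via the expansion~\eqref{eigenexp}, and contour shifts $\nu_x\mapsto\nu_x+i\beta$, $\nu_y\mapsto\nu_y+i\gamma$ to produce the spatial weight --- and the bookkeeping you describe for the $\ord(|{\pmb\nu}|)$ and $\ord(|{\pmb\nu}|^3)$ corrections is exactly what the paper does in its bounded-ratio lemma.

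There is, however, one genuine gap. Your contour shifts in $\nu_x,\nu_y$ are only admissible while $\beta,\gamma$ stay in the analyticity neighborhood $U$ of Lemma~\ref{Lem1}, so in practice they must be capped at size $\ord(\epsilon)$. When the ratio $(|\xx-\xt+\alpha t|+|y|)/t$ is bounded this is fine: the capped shift still yields $e^{-c\epsilon^2 t}$, which dominates the desired Gaussian. But when the ratio is large --- and this can happen for arbitrarily large $t$, not only for $t\le 2$ --- the capped shift produces at best $e^{-c\epsilon(|\xx-\xt+\alpha t|+|y|)}$, which is \emph{not} controlled by $e^{-(|\xx-\xt+\alpha t|^2+|y|^2)/(Mt)}$. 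Likewise, your claim that the complementary and high-frequency pieces are ``exponentially small in $t$ and hence harmless'' fails here: $e^{-\eta t}$ alone does not imply the Gaussian spatial weight once the ratio is unbounded. Your small-$t$ parametrix covers $t\in(0,2]$ but not this regime.

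The paper closes this gap by treating the large-ratio case with a completely separate argument: it abandons the Bloch decomposition, works instead with the whole-line resolvent kernel $G_{\nu_y,\lambda}(\xx,\xt)$ of $\El_{\nu_y}-\lambda$, proves uniform exponential-dichotomy bounds for $|\nu_y|+|\lambda|^{1/2}$ large (your final paragraph anticipates exactly this need), and then pushes the $\lambda$-contour \emph{into the right half-plane} by an amount $A\sim (|\xx-\xt+\alpha t|+|y|)^2/t^2$ and the $\nu_y$-contour by $\sim\sqrt{A}$. The exponential decay of $G_{\nu_y,\lambda}$ in $|\xx-\xt|$ then beats the growth from $e^{\lambda t}$ and delivers the full Gaussian. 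You should add this second regime explicitly; without it the stated pointwise bound on $\widetilde G$ is not established.
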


To prove Theorem~\ref{theope} we employ similar methods as in~\cite{JUN}, where pointwise Green's function estimates are obtained for wave-train solutions to reaction-diffusion systems in \emph{one} spatial dimension. To account for an additional spatial direction, we combine the methods in~\cite{JUN} with those from~\cite{HOF}, where pointwise bounds are established for spatially multidimensional viscous shock fronts. Since the proof of Theorem~\ref{theope} follows, by and large, the nontrivial, but by now classical, approach introduced in~\cite{ZUH}, we decided to include the proof of Theorem~\ref{theope} in Appendix~\ref{proofpointwise}.

\section{Nonlinear iteration scheme} \label{secitscheme}

In this section, we perturb the planar wave-train solution $u(\xx,y,t) = u_\infty(\xx)$ to~\eqref{comove} and we establish nonlinear equations for the perturbation and its derivatives. To account for translational invariance and exploit the decomposition~\eqref{decomp} of the Green's function, we introduce a phase function that tracks the shift of the perturbed solution in space relative to the original wave train. Our goal is to obtain a closed nonlinear iteration scheme for the perturbation and the phase function, which will be employed in~\S\ref{sec:nonstab} to prove the nonlinear stability results in~\S\ref{sec:statres}.

\subsection{Perturbation equations}
We consider the perturbed solution
\begin{align*}\ut(\xx,y,t) = u_\infty(\xx) + \vt(\xx,y,t),
\end{align*}
to~\eqref{comove}. The perturbation $\vt$ and its $\xx$-derivatives satisfy
\begin{align}
\left(\partial_t - \El\right)\left[\partial_\xx^i \vt\right] = \NT_i, \qquad i = 0,1,2, \label{pertbeq2}\end{align}
with
\begin{align*}
\NT_0 &:= f(u_\infty+\vt) - f(u_\infty) - f'(u_\infty) \vt, \\ 
\NT_1 &:= \left(f'(u_\infty+\vt)-f'(u_\infty)\right)\left(u_\infty' + \vt_\xx\right),\\
\NT_2 &:= f''(u_\infty + \vt)(u_\infty' + \vt_\xx) \vt_\xx + \left(f''(u_\infty + \vt)\left(u_\infty' + \vt_\xx\right) - f''(u_\infty)u_\infty'\right)u_\infty'\\
&\qquad  + \left(f'(u_\infty+\vt)-f'(u_\infty)\right)\left(u_\infty'' + \vt_{\xx\xx}\right),
\end{align*}
where we suppress the argument $(\xx,y,t)$ of $\vt$ and its derivatives, and the argument $\xx$ of $u_\infty$ and its derivatives for notational convenience. Using Taylor's Theorem, one observes that $\NT_0$ is quadratic in $\vt$, whereas $\NT_1$ and $\NT_2$ contain linear terms in $\vt$ or its derivatives. More precisely, there exist constants $B,C > 1$ such that as long as $\|\vt(t)\|_\infty \leq B$, we have the bounds
\begin{align}
\begin{split}
\left\|\NT_0(\xx,y,t)\right\| \leq &\ C\left\|\vt(\xx,y,t)\right\|^2, \qquad \left\|\NT_1(\xx,y,t)\right\| \leq C\left\|\vt(\xx,y,t)\right\|\left(1+\left\|\vt_\xx(\xx,y,t)\right\|\right),\label{nonlest3}
\end{split}
\end{align}
and, as long as $\|\vt(t)\|_\infty,\|\vt_\xx(t)\|_\infty \leq B$, we have the bound
\begin{align}
\begin{split}
\left\|\NT_2(\xx,y,t)\right\| &\leq C\left(\left\|\vt(\xx,y,t)\right\|\left(1+\left\|\vt_{\xx\xx}(\xx,y,t)\right\|\right) + \left\|\vt_\xx(\xx,y,t)\right\|\right),
\end{split}
\label{nonlest33}
\end{align}
for $\xx,y \in \R$ and $t \geq 0$, where $\|\cdot\|_\infty$ denotes the $L^\infty(\R^2,\R^n)$-norm. To account for translational invariance -- see Remarks~\ref{cancel} and~\ref{remtranslational} -- we introduce another coordinatization for the perturbed solution $\ut$ to~\eqref{comove}. As in~\cite{DSSS}, we write
\begin{align*}\ut(\xx + \psi(\xx,y,t),y,t) = u_\infty(\xx) + v(\xx,y,t),
\end{align*}
where the phase function $\psi \colon \R^2 \times \R_{\geq 0} \to \R$ is to be determined later. By the mean value theorem, there exist constants $B,C > 1$ such that as long as $\|\psi_\xx(t)\|_\infty  \leq B$, we have the bounds
\begin{align}
\begin{split}
\left\|\partial_\xx^i \left(v - \vt\right)(\xx,y,t)\right\| \leq&\ C\sum_{j = 0}^{i} \left(\left\|\partial_{\xx}^{j+1} u_\infty\right\|_{\infty} + \left\|\partial_\xx^{j+1} \vt(t)\right\|_{\infty} \right) \left\|\partial_\xx^{i-j} \psi(\xx,y,t)\right\|, \\
\left\|\left(v_y - \vt_y\right)(\xx,y,t)\right\| \leq&\ \left\|\vt_{\xx y}(t)\right\|_{\infty}\left\|\psi(\xx,y,t)\right\| + \left(\left\|u_\infty'\right\|_{\infty} + \left\|\vt_{\xx}(t)\right\|_{\infty}\right)\left\|\psi_y(\xx,y,t)\right\|,\\
\left\|\left(v_{\xx y} - \vt_{\xx y}\right)(\xx,y,t)\right\| \leq&\ C\left(\left\|\vt_{\xx \xx y}(t)\right\|_{\infty}\left\|\psi(\xx,y,t)\right\| + \left\|\vt_{\xx y}(t)\right\|_{\infty} \left\|\psi_\xx(\xx,y,t)\right\|\right.\\
&\qquad + \left(\left\|u_\infty''\right\|_{\infty} + \left\|\vt_{\xx\xx}(t)\right\|_{\infty} \right) \left\|\psi_y(\xx,y,t)\right\|\\ 
&\qquad \left.+ \left(\left\|u_\infty'\right\|_{\infty} + \left\|\vt_\xx(t)\right\|_{\infty} \right) \left\|\psi_{\xx y}(\xx,y,t)\right\|\right),
\end{split}
\label{conversion}
\end{align}
for $\xx,y \in \R$, $t \geq 0$ and $i = 0,1,2$. Our next step is to derive an equation for the perturbation $v$, the phase $\psi$ and their derivatives. We introduce
\begin{align*} w(\xx,y,t) := \ut(\xx + \psi(\xx,y,t),y,t) = u_\infty(\xx) + v(\xx,y,t).\end{align*}
Substituting $w(\xx,y,t)$ into~\eqref{comove}, while using that $\ut(\xx,y,t)$ solves~\eqref{comove}, leads to the residue induced by the phase
\begin{align*} w_t - D(k^2 w_{\xx\xx} + w_{yy})& - \omega w_\xx - f(w) =\\
&-\ut_3\psi_\xx + \ut_1\psi_t + D\left(\ut_{22}\psi_\xx - \ut_{1 2}\psi_y - (\ut_1 \psi_y)_y - k^2 (\ut_1 \psi_\xx)_\xx\right) + f(\ut)\psi_\xx,\end{align*}
which yields
\begin{align} \begin{split}
\left(\partial_t - \El\right)v =& \ f(u_\infty + v) - f(u_\infty) - f'(u_\infty)v\\
 &- \ut_3\psi_\xx + \ut_1\psi_t + D\left(\ut_{22}\psi_\xx - \ut_{1 2}\psi_y - (\ut_1 \psi_y)_y - k^2 (\ut_1 \psi_\xx)_\xx\right) + f(\ut)\psi_\xx,
\end{split} \label{veq}
\end{align}
where we suppress the arguments $(\xx,y,t)$ of $v,w,\psi$ and their derivatives, the arguments $(\xx+\psi(\xx,y,t),y,t)$ of $\ut$ and its derivatives and the arguments $\xx$ of $u_\infty$ and its derivatives. To avoid confusion by suppressing the arguments of $\ut$, we denote the partial derivative of $\ut$ with respect to its $j$-th argument by $\ut_j$. In order to obtain a closed system in $v$ and $\psi$, we need to eliminate the $\ut$-terms from the right-hand side of~\eqref{veq}. Therefore, we express $\ut_1$, $\ut_2$ and $\ut_3$ in terms of $v$, $\psi$ and their derivatives as follows. First, we calculate
\begin{align*}
u_\infty' + v_\xx = w_\xx = \ut_1(1+\psi_\xx), \quad v_t = w_t = \ut_3 + \ut_1\psi_t, \quad v_y = w_y = \ut_2 + \ut_1\psi_y,
\end{align*}
yielding the identities
\begin{align}
\ut_1 &= \frac{1}{1+\psi_\xx}\left(u_\infty' + v_\xx\right) \quad \ut_2 = v_t - \frac{\psi_t}{1+\psi_\xx}\left(u_\infty' + v_\xx\right), \quad \ut_3 = v_y - \frac{\psi_y}{1+\psi_\xx}\left(u_\infty' + v_\xx\right),
\label{exprveq}
\end{align}
where we suppress the arguments again. By employing the identities
\begin{align*}
\left(\partial_t - \El\right)\left[u_\infty' \psi\right] =&\ \El[u_\infty'] \psi - D\left(k^2 (u_\infty' \psi_\xx)_\xx + u_\infty' \psi_{yy}\right) - \left(D k^2 u_\infty'' + \omega u_\infty'\right) \psi_\xx + u_\infty' \psi_t\\
=&\ - D\left(k^2 (u_\infty' \psi_\xx)_\xx + u_\infty' \psi_{yy}\right) + f(u_\infty)\psi_\xx + u_\infty' \psi_t,\\
\left(\partial_t - \El\right)\left[v\psi_\xx\right] =&\ v_t \psi_\xx + v\psi_{\xx t} - D\left(k^2\left(v_{\xx\xx} \psi_\xx + 2v_\xx\psi_{\xx\xx} + v \psi_{\xx\xx\xx}\right) + v_{yy} \psi_\xx + 2v_y\psi_{\xx y} + v \psi_{\xx yy}\right)\\
& \qquad - \omega\left(v_\xx \psi_\xx + v\psi_{\xx\xx}\right) - f'(u_\infty) v \psi_\xx,
\end{align*}
and substituting~\eqref{exprveq} into the right hand side of~\eqref{veq}, we obtain the perturbation equation
\begin{align}
\left(\partial_t - \El\right)\left[v - u_\infty' \psi\right] = \Non - \left(\partial_t - \El\right)\left[v\psi_\xx\right], \label{pertbeq}
\end{align}
where the nonlinearity $\Non$ is given by
\begin{align}
\begin{split}
\Non :=& \left(f(u_\infty+v) - f(u_\infty) - f'(u_\infty) v\right)\left(1+\psi_\xx\right) + v_\xx\psi_t + v\psi_{\xx t} - \omega\left(v_\xx \psi_\xx + v \psi_{\xx\xx}\right)\\
    &\qquad - D\left(v\left(k^2\psi_{\xx\xx\xx} + \psi_{\xx yy}\right) + v_\xx\left(3k^2 \psi_{\xx\xx} + \psi_{yy}\right) + 2v_y\psi_{\xx y} + 2k^2 v_{\xx\xx} \psi_\xx + 2v_{\xx y}\psi_y\right)\\
    &\qquad + \frac{D}{1+\psi_\xx} \left(2\left(u_\infty' + v_\xx\right)\left(\psi_y\psi_{\xx y} + k^2 \psi_\xx \psi_{\xx\xx}\right) + \left(u_\infty'' + v_{\xx\xx}\right)\left(\psi_y^2 + k^2 \psi_\xx^2\right)\right)\\
    &\qquad - \frac{D}{\left(1+\psi_\xx\right)^2} \left(u_\infty' + v_\xx\right)\left(\psi_y^2 + k^2 \psi_\xx^2\right)\psi_{\xx\xx}.
\end{split} \label{nonlinearity}
\end{align}
Using Taylor's Theorem, it is relatively straightforward to observe that the nonlinearity~\eqref{nonlinearity} is quadratic in $v$, $\psi$ and their derivatives. More precisely, there exist constants $B,C > 1$ such that as long as it holds
\begin{align*}
\|v(t)\|_\infty + \sum_{|\mathfrak{a}| = 1} \left\|D_{\xx,y}^{\mathfrak{a}} \psi(t)\right\|_\infty \leq B,
\end{align*}
we have the bound
\begin{align}
\begin{split}
\left\|\Non(\xx,y,t)\right\| \leq \ &C\left[\left(\left\|v(\xx,y,t)\right\| + \sum_{1 \leq |\mathfrak{a}| \leq 3} \left\|D_{\xx,y,t}^{\mathfrak{a}} \psi(\xx,y,t)\right\|\right)^2\right.\\
&\left.\qquad + \left(\left\|v_y(\xx,y,t)\right\| + \sum_{0 \leq |\mathfrak{b}| \leq 1} \left\|D_{\xx,y}^{\mathfrak{b}} v_\xx(\xx,y,t)\right\|\right) \sum_{1 \leq |\mathfrak{c}| \leq 2} \left\|D_{\xx,y,t}^{\mathfrak{c}} \psi(\xx,y,t)\right\|\right],\\
\end{split}
\label{nonlest1}
\end{align}
for $\xx,y \in \R$ and $t \geq 0$.

\begin{rem} \label{cancel}
In the perturbation equation~\eqref{pertbeq} for $v$, we grouped terms that are nonlinear in $(v,\psi)$ and their derivatives on the right-hand side, whereas the left-hand side contains all contributions that are linear in $(v,\psi)$ and their derivatives. This decomposition suggests new coordinates $(w,\psi)$ with $w = v + u_\infty'\psi$. The advantage of working in these coordinates is that, by choosing $\psi$ appropriately, the contribution $u_\infty' \psi$ accounts for the translational mode $u_\infty'(\xx)e(\xx,\xt,y,t)$ of the Green's function in the nonlinear iteration -- see Remark~\ref{remtranslational}. The principle of canceling the slowly decaying translational mode via these type of coordinatizations has been developed independently in~\cite{JONZV,JONZ} and~\cite{DSSS,SAN3} using different methods -- see~\cite[Remark~5.1]{JONZNL} for a review.

In the nonlinear right-hand side of~\eqref{pertbeq}, we introduced the term $\left(\partial_t - \El\right)\left[v\psi_\xx\right]$ in order to eliminate any temporal derivatives of $v$ in the nonlinear iteration. Indeed, one observes from~\eqref{nonlinearity} that the residual nonlinearity $\Non$ contains only $\xx$- and $y$-derivatives of $v$. In Remark~\ref{motvt} below, we will explain how we control spatial derivatives of $v$ in the nonlinear iteration.
\end{rem}

\subsection{Duhamel's integral formulation}
Our goal is obtain a closed nonlinear iteration scheme by integrating~\eqref{pertbeq}, while exploiting the decomposition~\eqref{decomp} of the Green's function. Applying Duhamel's formula to~\eqref{pertbeq} yields the equivalent integral equation
\begin{align}
\begin{split}
v(\xx,y,t) =& \ u_\infty'(\xx)\psi(\xx,y,t) - v(\xx,y,t)\psi_\xx(\xx,y,t) + \int_{\R^2} G(\xx,\xt,y-\yt,t) v_0(\xt,\yt) d\xt d\yt\\
&\qquad + \int_0^t \int_{\R^2} G(\xx,\xt,y-\yt,t-s) \Non(\xt,\yt,s)d\xt d\yt ds,\\
\end{split} \label{vintegral}
\end{align}
with $\xx,y \in \R, t \geq 0$ and $v_0(\xx,y) = v(\xx,y,0)$. Now we define the phase function $\psi$ as the solution to the integral equation
\begin{align}
\begin{split}
\psi(\xx,y,t) =&  -\int_{\R^2} e(\xx,\xt,y-\yt,t) v_0(\xt,\yt) d\xt d\yt\\
&\qquad - \int_0^t \int_{\R^2} e(\xx,\xt,y-\yt,t-s) \Non(\xt,\yt,s)d\xt d\yt ds.
\end{split}
\label{defpsi}
\end{align}
Since it holds $e(\xx,\xt,y,0) = 0$ for all $\xx,\xt,y \in \R$ by construction, we have thus taken $\psi(\xx,y,0) = 0$ as initial condition for $\psi$. This choice of $\psi$ leads via~\eqref{decomp} to the system of integral equations
\begin{align}
\begin{split}
D_{\xx,y}^{\mathfrak{a}} v(\xx,y,t) =& \ -D_{\xx,y}^{\mathfrak{a}} v(\xx,y,t) \psi_\xx(\xx,y,t) + \int_{\R^2} D_{\xx,y}^{\mathfrak{a}} \GT(\xx,\xt,y-\yt,t) v_0(\xt,\yt) d\xt d\yt\\
&\qquad + \int_0^t \int_{\R^2} D_{\xx,y}^{\mathfrak{a}} \GT(\xx,\xt,y-\yt,t-s) \Non(\xt,\yt,s)d\xt d\yt ds,\\
D_{\xx,y,t}^{\mathfrak{b}} \psi(\xx,y,t) =& -\int_{\R^2} D_{\xx,y,t}^{\mathfrak{b}} e(\xx,\xt,y-\yt,t) v_0(\xt,\yt) d\xt d\yt\\
&\qquad- \int_0^t \int_{\R^2} D_{\xx,y,t}^{\mathfrak{b}} e(\xx,\xt,y-\yt,t-s) \Non(\xt,\yt,s)d\xt d\yt ds,
\end{split}
\label{integralscheme}
\end{align}
with $\xx,y \in \R, t \geq 0$, and multi-indices $\mathfrak{a} \in \Z_{\geq 0}^2$ and $\mathfrak{b} \in \Z_{\geq 0}^3$ satisfying $0 \leq |\mathfrak{a}|\leq 1$ and $0 \leq |\mathfrak{b}| \leq 3$, where we use that $e(\xx,\xt,y,0) = 0$ for all $\xx,\xt,y \in \R$ to determine the $t$-derivative of the second integral in~\eqref{defpsi}.

To obtain a closed nonlinear iteration scheme from~\eqref{integralscheme}, we still need to control the second derivatives $v_{\xx\xx}$ and $v_{\xx y}$ in the nonlinearity~\eqref{nonlinearity}. We emphasize that we cannot obtain such control by differentiating the $v_\xx$- and $v_y$-equation in~\eqref{integralscheme} with respect to $\xx$, since, as explained in the subsequent Remark~\ref{motvt}, the second spatial derivatives of the Green's function $\GT(\xx,\xt,y,t)$ do not permit $t$-integrable bounds. Instead, we proceed as follows. First, we eliminate the second derivatives $v_{\xx\xx}$ and $v_{\xx y}$ in the $v$-equation in~\eqref{integralscheme} using integration by parts at the expense of taking more derivatives of the $\psi$-dependent terms. We note that this elimination is only possible due to the fact that all derivatives of $v$ in~\eqref{nonlinearity} are paired with $\psi$-terms and not with other $v$-terms, and due to the fact that the pointwise bounds on the Green's functions $\partial_{\xt} \GT(\xx,\xt,y,t)$ and $\partial_{y} \GT(\xx,\xt,y,t)$, obtained in Theorem~\ref{theope}, are integrable over $t$. Since we control also higher order $\xt$- and $y$-derivatives of the Green's function $e(\xx,\xt,y,t)$ via Theorem~\ref{theope}, we can apply integration by parts multiple times in the $\psi$-equation to eliminate \emph{all} derivatives of $v$ from the nonlinearity. The elimination of (second) derivatives of $v$ from the nonlinearities in the $v$- and $\psi$-equations in~\eqref{integralscheme} lead to sharp estimates, which are necessary to close the nonlinear iteration. However, the second derivatives of $v$ cannot be eliminated from the nonlinearity in the $v_\xx$- and $v_y$-equations in~\eqref{integralscheme} using integration by parts, since this would again lead to second spatial derivatives of the Green's function $\GT(\xx,\xt,y,t)$, which do not permit $t$-integrable bounds. To control the second derivatives $v_{\xx\xx}$ and $v_{\xx y}$ of $v$, we will append integral equations for the derivatives $\vt_{\xx\xx},\vt_{\xx y}, \vt_{\xx \xx \xx}$ and $\vt_{\xx \xx y}$ of the `unshifted' perturbation $\vt$ to the scheme.

Thus, exploiting that $\GT(\xx,\xt,y,t)$, $e(\xx,\xt,y,t)$ and its derivatives are exponentially localized in space and assuming that there exists a bound $B > 1$ such that
\begin{align*}
\sup_{0 \leq s \leq t} \left(\|v(s)\|_\infty + \|v_\xx(s)\|_\infty + \sum_{1 \leq |\mathfrak{b}| \leq 3} \left\|D_{\xx,y}^{\mathfrak{b}} \psi(s)\right\|_\infty\right) \leq B,
\end{align*}
we integrate by parts in~\eqref{integralscheme} to obtain the following equivalent integral system
\begin{align}
\begin{split}
v(\xx,y,t) =& \ -v(\xx,y,t) \psi_\xx(\xx,y,t) + \int_{\R^2} \GT(\xx,\xt,y-\yt,t) v_0(\xt,\yt) d\xt d\yt\\
            & \qquad + \int_0^t \int_{\R^2} \GT(\xx,\xt,y-\yt,t-s) \Non_1(\xt,\yt,s)d\xt d\yt ds \\
            & \qquad + \int_0^t \int_{\R^2} \partial_{\xt} \GT(\xx,\xt,y-\yt,t-s) \Non_2(\xt,\yt,s)d\xt d\yt ds\\
            & \qquad + \int_0^t \int_{\R^2} \partial_y \GT(\xx,\xt,y-\yt,t-s) \Non_3(\xt,\yt,s)d\xt d\yt ds\\
D_{\xx,y}^{\mathfrak{c}} v(\xx,y,t) =& \ -D_{\xx,y}^{\mathfrak{c}} \left[v(\xx,y,t) \psi_\xx(\xx,y,t)\right] + \int_{\R^2} D_{\xx,y}^{\mathfrak{c}} \GT(\xx,\xt,y-\yt,t) v_0(\xt,\yt) d\xt d\yt\\
&\qquad + \int_0^t \int_{\R^2} D_{\xx,y}^{\mathfrak{c}} \GT(\xx,\xt,y-\yt,t-s) \Non(\xt,\yt,s)d\xt d\yt ds,\\
D_{\xx,y,t}^{\mathfrak{b}} \psi(\xx,y,t) =& -\int_{\R^2} D_{\xx,y,t}^{\mathfrak{b}} e(\xx,\xt,y-\yt,t) v_0(\xt,\yt) d\xt d\yt \\
&\qquad + \sum_{0 \leq |\mathfrak{a}| \leq 2} \int_0^t \int_{\R^2} D_{\xx,y,t}^{\mathfrak{b}} D_{\xt,y}^{\mathfrak{a}} e(\xx,\xt,y-\yt,t-s) \NTT_{\mathfrak{a}}(\xt,\yt,s)d\xt d\yt ds,
\end{split} \label{integralscheme2}
\end{align}
with $\xx,y \in \R, t \geq 0$ and multi-indices $\mathfrak{c} \in \Z_{\geq 0}^2$ and $\mathfrak{b} \in \Z_{\geq 0}^3$ satisfying $|\mathfrak{c}|=1$ and $0 \leq |\mathfrak{b}| \leq 3$, where the nonlinearity $\Non_1$ contains the term
\begin{align*} Dv_\xx \left(2\psi_{y y} + \left(2k^2\psi_\xx - \frac{\psi_y^2 + k^2\psi_\xx^2}{1+\psi_\xx}\right)_\xx\right),\end{align*}
with a derivative of $v$, whereas all other terms present in $\Non_1$ do not depend on derivatives of $v$. Similarly, the nonlinearity $\Non_2$ only contains the term
\begin{align*} D v_\xx \left(2k^2\psi_\xx - \frac{\psi_y^2 + k^2\psi_\xx^2}{1+\psi_\xx}\right),\end{align*}
with a derivative of $v$, the nonlinearity $\Non_3$ only contains the term
\begin{align*} -2D v_\xx \psi_y,\end{align*}
with a derivative of $v$ and the nonlinearities $\NTT_{\mathfrak{a}}$ contain no derivatives of $v$ for all multi-indices $\mathfrak{a} \in \Z_{\geq 0}^2$ with $0 \leq |\mathfrak{a}| \leq 2$. Using Taylor's Theorem, one readily observes that the obtained nonlinearities $\Non_j,\NTT_{\mathfrak{a}}$ are quadratic in $v$, $\psi$ and their derivatives. More precisely, there exist constants $B,C > 1$ such that as long as
\begin{align*}
\|v(t)\|_\infty + \sum_{1 \leq |\mathfrak{b}| \leq 2} \left\|D_{\xx,y}^{\mathfrak{b}} \psi(t)\right\|_\infty \leq B,
\end{align*}
we have the bounds
\begin{align}
\begin{split}
\left\|\Non_j(\xx,y,t)\right\| \leq&\ C\left[\left(\|v(\xx,y,t)\| + \sum_{1 \leq |\mathfrak{b}| \leq 3} \left\|D_{\xx,y,t}^{\mathfrak{b}} \psi(\xx,y,t)\right\|\right)^2\right. \\
&\left.\qquad \qquad \qquad \qquad \qquad \qquad + \left\|v_\xx(\xx,y,t)\right\| \sum_{1 \leq |\mathfrak{c}| \leq 2} \left\|D_{\xx,y}^{\mathfrak{c}} \psi(\xx,y,t)\right\|\right],\\
\left\|\NTT_{\mathfrak{a}}(\xx,y,t)\right\| \leq&\ C\left(\|v(\xx,y,t)\| + \sum_{1 \leq |\mathfrak{b}| \leq 3} \left\|D_{\xx,y,t}^{\mathfrak{b}} \psi(\xx,y,t)\right\|\right)^2,
\end{split} \label{nonlest2}
\end{align}
for $\xx,y \in \R$, $t \geq 0$, $j = 1,2,3$ and $\mathfrak{a} \in \Z_{\geq 0}^2$ with $0 \leq |\mathfrak{a}| \leq 2$. Comparing the estimates on $\Non_j, j = 1,2,3$ to the estimate~\eqref{nonlest1} on $\Non$, we observe that we have indeed gained one derivative of $v$. Since the bounds on the first spatial derivatives of $v$, obtained in the subsequent analysis in~\S\ref{sec:nonstab}, exhibit stronger decay than those on the second spatial derivatives of $v$, this will lead to sharp estimates on $v$, and on $\psi$ and its derivatives.

However, as mentioned before, the integral equations~\eqref{integralscheme2} do not give rise to a closed nonlinear iteration scheme yet, because we still need to control the second derivatives $v_{\xx\xx}$ and $v_{\xx y}$ occurring in the nonlinearity $\Non$ in the $v_\xx$- and $v_y$-equations in~\eqref{integralscheme2}. This can be achieved by appending integral equations for the derivatives $\vt_{\xx\xx},\vt_{\xx y}, \vt_{\xx \xx \xx}$ and $\vt_{\xx \xx y}$ of the `unshifted' perturbation $\vt$ to the scheme~\eqref{integralscheme2} and employing the estimates~\eqref{conversion} -- see also Remark~\ref{motvt}. Applying Duhamel's formula to~\eqref{pertbeq2} yields
\begin{align}
\begin{split}
D_{\xx,y}^{\mathfrak{a}} \partial_{\xx}^i \vt(\xx,y,t) =&\ \int_{\R^2} D_{\xx,y}^{\mathfrak{a}} G(\xx,\xt,y-\yt,t) \partial_{\xt}^i v_0(\xt,\yt) d\xt d\yt\\
&\qquad + \int_0^t \int_{\R^2} D_{\xx,y}^{\mathfrak{a}} G(\xx,\xt,y-\yt,t-s) \NT_i(\xt,\yt,s)d\xt d\yt ds,
\end{split} \label{integralscheme3}
\end{align}
with $\xx,y \in \R$, $t \geq 0$, $i = 0,1,2$ and $\mathfrak{a} \in \Z_{\geq 0}^2$ satisfying $0 \leq |\mathfrak{a}| \leq 1$, where we used that $\psi(\xx,y,0) = 0$ implies $v_0(\xx,y) = v(\xx,y,0) = \vt(\xx,y,0)$ for all $\xx, y \in \R$. Observe that by~\eqref{nonlest3} and~\eqref{nonlest33} the nonlinearities $\NT_i, i = 0,1,2,$ can be estimated in terms of $\vt$, $\vt_\xx$ and $\vt_{\xx \xx}$. Thus, combining the integral equations~\eqref{integralscheme2} and~\eqref{integralscheme3} and the estimates~\eqref{nonlest3},~\eqref{nonlest33},~\eqref{conversion},~\eqref{nonlest1} and~\eqref{nonlest2} yields a closed nonlinear iteration scheme.

\begin{rem}\label{remtranslational}
Applying Duhamel's formula to~\eqref{pertbeq2} gives the integral equation
\begin{align}
\begin{split}
\vt(\xx,y,t) =&\ \int_{\R^2} G(\xx,\xt,y-\yt,t) v_0(\xt,\yt) d\xt d\yt\\
&\qquad + \int_0^t \int_{\R^2} G(\xx,\xt,y-\yt,t-s) \NT_0(\xt,\yt,s)d\xt d\yt ds, \end{split} \label{integralscheme4}
\end{align}
with $\xx,y \in \R$ and $t \geq 0$. Observe that~\eqref{integralscheme4} yields a closed nonlinear iteration scheme, since the nonlinearity $\NT_0$ depends only on $\vt$ itself. However, because $\NT_0$ is quadratic in $\vt$ in general, the Green's function estimate
\begin{align*} \left\|G(\xx,\xt,y,t)\right\| \leq Ct^{-1}e^{-\frac{|\xx-\xt + \alpha t|^2 + |y|^2}{Mt}}, \qquad \xx,\xt,y \in \R, t > 0,\end{align*}
obtained in Theorem~\ref{theope} is not strong enough to close a nonlinear iteration argument. The decomposition in Theorem~\ref{theope} factors out the translational mode $u_\infty'(\xx)e(\xx,\xt,y,t)$ of the Green's function, which yields an additional decay factor $t^{-\frac{1}{2}}$ in the estimate of the residual. Moreover, the introduction of the phase function $\psi(\xx,y,t)$ in the nonlinear iteration scheme leads to a contribution $u_\infty'\psi$ in~\eqref{vintegral}, which cancels the translational mode of the Green's function by a judicious choice~\eqref{defpsi} of $\psi$. The additional decay of the residual mode of the Green's function can then be exploited to close the nonlinear iteration argument. The trade-off is that the introduction of the phase function $\psi(\xx,y,t)$ yields derivatives of $v$ in the nonlinearity, which needs to be controlled -- see also Remark~\ref{motvt}.
\end{rem}

\begin{rem} \label{motvt}
In order to obtain a closed nonlinear iteration scheme from~\eqref{integralscheme}, one needs to control the second derivatives $v_{\xx y}$ and $v_{\xx\xx}$ in the nonlinearity~\eqref{nonlinearity}. Naively, one might introduce integral equations for these second derivatives of $v$ by differentiating~\eqref{integralscheme}. However, the second order derivatives of the Green's function satisfy the bounds
\begin{align*} \left\|D_{\xx,\xt,y}^{\mathfrak{b}} \GT(\xx,\xt,y,t)\right\| \leq Ct^{-\frac{3}{2}}\left(1+t^{-\frac{1}{2}}\right)e^{-\frac{|\xx-\xt + \alpha t|^2 + |y|^2}{Mt}}, \qquad \xx,\xt,y \in \R, t > 0, \mathfrak{b} \in \Z_{\geq 0}^3, |\mathfrak{b}| = 2,\end{align*}
for some constants $C,M > 1$, which leads, after integration over $\xt$ and $y$, to the nonintegrable factor $(t-s)^{-1}$ in the estimation of the integral equations for $v_{\xx y}$ and $v_{\xx\xx}$. Integrating by parts to move a derivative from the Green's function to the nonlinearity, introduces third derivatives of $v$ in the nonlinearity and thus transfers the problem of controlling second derivatives of $v$ to controlling third derivatives of $v$.

The same problem occurs in the nonlinear stability analyses~\cite{JONZNL,JONZ,JUN,JUNNL} of wave-train solutions to reaction-diffusion systems in one spatial dimension. There one establishes a nonlinear damping estimate that controls the $H^k$-norm of the perturbation $v$ for $k \geq 0$ by its $L^2$-norm and by the $H^k$-norm of $(\psi_\xx,\psi_t)$. This yields nonlinear $L^1 \cap H^k \to H^k$-stability in~\cite{JONZ} and nonlinear $L^1 \cap H^k \to L^\infty$-stability with nonlocal phase in~\cite{JONZNL}. The damping estimate from~\cite{JONZNL,JONZ} is also employed in the pointwise nonlinear stability analyses in~\cite{JUN,JUNNL}. Since the derivatives of $v$ are paired with $\psi$-terms in the nonlinearity, one only needs $L^\infty$-bounds on the derivatives of $v$ in~\cite{JUN,JUNNL}. These $L^\infty$-bounds follow via Sobolev interpolation from  $H^k$-bounds on $v$ that are obtained using the damping estimate from~\cite{JONZNL,JONZ}.

One can establish a $2$-dimensional equivalent of the damping estimate in~\cite{JONZNL,JONZ}, which bounds the $H^k(\R^2,\R^n)$-norm of $v$ in terms of its $L^2(\R^2,\R^n)$-norm and the $H^k(\R^2,\R^n)$-norm of $(\psi_\xx,\psi_t)$. However, such a damping estimate cannot be employed in our analysis, since we allow for nonlocal perturbations. More precisely, we consider planar perturbations that are not in $L^2(\R^2,\R^n)$, whereas in~\cite{JONZNL,JONZ,JUN,JUNNL} the perturbations are $L^2$-localized on the real line (allowing possibly for a nonlocal phase modulation). Since we consider perturbations that are only nonlocalized in one spatial direction (say the $y$-direction), it may be possible to adapt the damping estimate in~\cite{JONZNL,JONZ} to work in the mixed $L^\mathfrak{p}$-space $L^{\mathfrak{p}}(\R^2,\R^n)$, where $\mathfrak{p}$ is the tuple $(2,\infty)$, endowed with the norm
\begin{align*} \|u\|_{\mathfrak{p}} = \sup_{y \in \R} \left(\int_\R \left\|u(\xx,y)\right\|^2 d\xx\right)^{\frac{1}{2}}. \end{align*}
However, we expect that one needs to append a damping estimate of $v_y$ in $L^2(\R^2,\R^n)$ in order to control the $y$-derivatives in the damping estimate of $v$ in $L^{\mathfrak{p}}(\R^2,\R^n)$, thus requiring the derivatives of $v$ to be localized in $\R^2$.

In this paper, we choose an alternative approach by appending the integral equations~\eqref{integralscheme3} for the derivatives of the `unshifted' perturbation $\vt$ to the nonlinear iteration scheme~\eqref{integralscheme2}. All derivatives in the nonlinearity~\eqref{nonlinearity} vanish, when the phase function $\psi$ is set to $0$. Therefore, the spatial derivatives of $\vt$ can be controlled without running into problems due to nonintegrable Green's function bounds. Then, estimates like~\eqref{conversion} can be used to control the derivatives of $v$. The advantage over a possible $L^{\mathfrak{p}}(\R^2,\R^n)$-damping estimate is that we do not require localization of any derivative of $v$ in $\R^2$. The trade-off is that the obtained estimates on $v_{\xx\xx}$ and $v_{\xx y}$ are not as strong as one might expect due to the fact that we do not correct for a phase shift relative to the original wave train while estimating $\vt$ and its derivatives. However, by \emph{combining} these estimates with the ones on the integral scheme~\eqref{integralscheme2} which does track the phase shift, they are strong enough to close the nonlinear iteration argument -- see Remark~\ref{remtranslational}. Moreover, for fully exponentially localized perturbations, our method yields the same optimal decay bounds as the ones one obtains with a damping estimate in $H^k(\R^2,\R^n)$ up to a possibly artificial $\log(2+t)$-factor -- see Theorem~\ref{maintheorem2} and Remark~\ref{optimality}.
\end{rem}

\section{Nonlinear stability analysis} \label{sec:nonstab}

We prove the nonlinear stability results stated in~\S\ref{sec:statres} by applying the pointwise Green's function estimates from Theorem~\ref{theope} to the nonlinear iteration scheme consisting of~\eqref{conversion},~\eqref{integralscheme2} and~\eqref{integralscheme3}. We start with the proof of Theorem~\ref{maintheorem} concerning nonlocalized perturbations.

\begin{proof}[Proof of Theorem~\ref{maintheorem}] In this proof, the constant $M > 1$ is as in Theorem~\ref{theope}, whereas $C>1$ denotes a constant, which is independent of $E_0,\xx,\xt,y$ and $t$, that will be taken larger if necessary.

Short-time existence theory for semilinear parabolic equations -- see~\cite{HEN,LUN} -- yields via a standard contraction-mapping argument that there exists a maximal $T_0 \in (0,\infty]$ such that~\eqref{pertbeq2} has a mild solution $\vt(t)$ in $W^{2,\infty}(\R^2,\R^n)$ on $[0,T_0)$ with $\vt(0) = v_0$. The map $t \mapsto \|\vt(t)\|_{W^{2,\infty}(\R^2,\R^n)}$ is continuous on $[0,T_0)$ and, if $T_0$ is finite, it must blow up as $t \uparrow T_0$. By a similar contraction-mapping argument, the semilinear parabolic system given by~\eqref{pertbeq2},~\eqref{pertbeq} and~\eqref{defpsi} -- and hence the integral system given by~\eqref{integralscheme2} and~\eqref{integralscheme3} -- has a solution $(v(t),\psi(t))$ in $X_\infty := W^{2,\infty}(\R^2,\R^n) \times W^{3,\infty}(\R^2,\R)$ on some maximal time interval $[0,T_1), T_1 \in (0,\infty]$ with initial condition $(v(0),\psi(0)) = (v_0,0)$ such that the template function
\begin{align*}
\eta(t) &:= \sup_{\begin{smallmatrix} 0 \leq s \leq t\\ \xx,y \in \R\end{smallmatrix}} \left(e^{\frac{|\beta \xx + \gamma y + \alpha \beta s|^2}{M(1 + s)}}  \left[(1+s)\left(\frac{\left\|v(\xx,y,s)\right\|}{\log(2+s)} + \sum_{1 \leq |\mathfrak{b}| \leq 3} \left\|D_{\xx,y,s}^\mathfrak{b} \psi(\xx,y,s)\right\|\right)\right.\right.\\
&\qquad\qquad\qquad \left.\left.\phantom{\sum_{1 \leq |\mathfrak{b}| \leq 3}} + \sqrt{s}\left(\left\|v_\xx(\xx,y,s)\right\|+ \left\|v_y(\xx,y,s)\right\| + \left\|\vt_y(\xx,y,s)\right\|\right)\right.\right.\\
&\qquad\qquad\qquad \left.\left.\phantom{\sum_{1 \leq |\mathfrak{b}| \leq 3}} + \sqrt{1+s}\left(\left\|\vt(\xx,y,s)\right\| + \left\|\vt_\xx(\xx,y,s)\right\| +\left\|\psi(\xx,y,s)\right\|\right)\right]\right.\\
&\qquad \left.\phantom{\sum_{1 \leq |\mathfrak{b}| \leq 3}} +
\frac{\sqrt{s}}{1+\sqrt{s}}\left(\left\|v_{\xx\xx}(\xx,y,s)\right\| + \left\|v_{\xx y}(\xx,y,s)\right\| + \left\|\vt_{\xx\xx}(\xx,y,s)\right\| + \left\|\vt_{\xx y}(\xx,y,s)\right\|\right)\right),
\end{align*}
is well-defined and continuous on $[0,T_1)$ and, if $T_1$ is finite, then $\eta(t)$ must blow up as $t \uparrow T_1$. Our goal is to prove that there exist constants $B > 0$ and $C > 1$ such that for all $t \geq 0$ with $\eta(t) \leq B$ we have
\begin{align} \eta(t) \leq C\left(E_0 + \eta(t)^2\right). \label{etaest}\end{align}
Since $\eta$ must be continuous as long as it remains small, we can apply continuous induction using estimate~\eqref{etaest}. Thus, provided that $E_0 < \frac{1}{4C}$, it follows $\eta(t) \leq 2CE_0$ for \emph{all} $t \geq 0$, which readily yields the result.

We prove the key estimate~\eqref{etaest}. We employ two integral identities
\begin{align}
\begin{split}
\int_{\R^2} e^{-\frac{|\xx-\xt + \alpha t|^2 + |y-\yt|^2}{Mt} - \frac{|\beta \xt + \gamma \yt|^2}{M}}d\xt d\yt &= \frac{M\pi t e^{-\frac{|\beta \xx + \gamma y + \alpha \beta t|^2}{M(1 + t)}}}{\sqrt{1+t}},\\
\int_{\R^2} e^{-\frac{|\xx-\xt + \alpha (t-s)|^2 + |y-\yt|^2}{M(t-s)} - \frac{|\beta \xt + \gamma \yt + \alpha \beta s|^2}{M(1+s)}}d\xt d\yt &= \frac{M\pi (t-s) \sqrt{1+s} e^{-\frac{|\beta \xx + \gamma y + \alpha \beta t|^2}{M(1 + t)}}}{\sqrt{1+t}},
\end{split} \label{intid4}
\end{align}
with $\xx,y,s,t \in \R$ satisfying $0 \leq s < t$. These identities are obtained by adding the fractions in the exponent of the integrand, while recalling that $(\beta,\gamma) \in \R^2$ is a unit vector, and applying the (standard) formula
\begin{align} \int_{\R} e^{az(z-2b)}dz &= \frac{\sqrt{\pi}}{\sqrt{|a|}}e^{-ab^2}, &\quad a < 0, b \in \C, \label{intid} \end{align}
twice to evaluate the double integral.

Let $B > 1$ be as in~\S\ref{secitscheme} and assume $t > 0$ is such that $\eta(t) \leq B$. We start by bounding the solutions to the integral system~\eqref{integralscheme2} in terms of $\eta(t)$. By estimate~\eqref{nonlest2} the nonlinearities in~\eqref{integralscheme2} can be bounded as
\begin{align}
\begin{split}
\left\|\Non_j(\xt,\yt,s)\right\| &\leq C\frac{\eta(t)^2 e^{-\frac{|\beta \xt + \gamma \yt + \alpha \beta s|^2}{M(1+s)}}}{(1+s)\sqrt{s}},
\qquad \qquad \left\|\Non(\xt,\yt,s)\right\| \leq C\frac{\eta(t)^2 e^{-\frac{|\beta \xt + \gamma \yt + \alpha \beta s|^2}{M(1+s)}}}{\sqrt{1+s}\sqrt{s}},\\
&\left\|\NTT_{\mathfrak{a}}(\xt,\yt,s)\right\| \leq C\frac{\eta(t)^2 \left(\log(2+s)\right)^2 e^{-\frac{|\beta \xt + \gamma \yt + \alpha \beta s|^2}{M(1+s)}}}{(1+s)^2},
\end{split} \label{nonlinest}
\end{align}
with $\xt,\yt \in \R$, $0 < s \leq t$, $j = 1,2,3$ and $\mathfrak{a} \in \Z_{\geq 0}^3$ satisfying $0 \leq |\mathfrak{a}| \leq 2$ . Thus, applying the pointwise Green's function estimates in Theorem~\ref{theope} to the integral system~\eqref{integralscheme2} and employing~\eqref{intid4} and~\eqref{nonlinest}, yields
\begin{align}
\begin{split}
\left\|v(\xx,y,t)\right\| &\leq C \frac{e^{-\frac{|\beta \xx + \gamma y + \alpha \beta t|^2}{M(1 + t)}}}{\sqrt{1+t}}\left(\frac{\eta(t)^2\log(2+t)}{(1+t)\sqrt{1+t}} + \frac{E_0}{\sqrt{1+t}} +  \int_0^t \frac{\eta(t)^2}{\sqrt{1+s} \sqrt{s} \sqrt{1+t-s}} ds\right)\\ &\leq C\left(E_0+\eta(t)^2\right)\frac{\log(2+t)}{1+t} e^{-\frac{|\beta \xx + \gamma y + \alpha \beta t|^2}{M(1 + t)}},\\
\left\|D_{\xx,y}^{\mathfrak{c}} v(\xx,y,t)\right\| &\leq C \frac{e^{-\frac{|\beta \xx + \gamma y + \alpha \beta t|^2}{M(1 + t)}}}{\sqrt{t}}\left(\frac{\eta(t)^2}{1+t} + \frac{E_0}{\sqrt{1+t}} + \eta(t)^2\! \! \int_0^t \!\!\frac{1}{\sqrt{s} \sqrt{t-s}} ds\right)\\& \leq C\left(E_0+\eta(t)^2\right)\frac{e^{-\frac{|\beta \xx + \gamma y + \alpha \beta t|^2}{M(1 + t)}}}{\sqrt{t}},\\
\left\|\psi(\xx,y,t)\right\| &\leq C \frac{e^{-\frac{|\beta \xx + \gamma y + \alpha \beta t|^2}{M(1 + t)}}}{\sqrt{1+t}}\left(E_0 + \eta(t)^2 \int_0^t \frac{\left(\log(2+s)\right)^2}{(1+s)\sqrt{1+s}} ds\right)\\ &\leq C\left(E_0 + \eta(t)^2\right)\frac{e^{-\frac{|\beta \xx + \gamma y + \alpha \beta t|^2}{M(1 + t)}}}{\sqrt{1+t}},\\
\left\|D_{\xx,y,t}^{\mathfrak{b}} \psi(\xx,y,t)\right\| &\leq C \frac{e^{-\frac{|\beta \xx + \gamma y + \alpha \beta t|^2}{M(1 + t)}}}{\sqrt{1+t}}\left(\frac{E_0}{\sqrt{1+t}} + \eta(t)^2 \int_0^t \frac{\left(\log(2+s)\right)^2}{(1+s)\sqrt{1+s}\sqrt{1 + t - s}} ds\right) \\ &\leq C\left(E_0 + \eta(t)^2\right)\frac{e^{-\frac{|\beta \xx + \gamma y + \alpha \beta t|^2}{M(1 + t)}}}{1+t},
\end{split} \label{iteration1}
\end{align}
with $\xx,y \in \R$, and multi-indices $\mathfrak{c} \in \Z_{\geq 0}^2$ and $\mathfrak{b} \in \Z_{\geq 0}^3$ satisfying $|\mathfrak{c}| = 1$ and $0 \leq |\mathfrak{b}| \leq 3$.

Next, we use~\eqref{conversion} to bound $\vt(\xx,y,t)$, $\vt_\xx(\xx,y,t)$ and $\vt_y(\xx,y,t)$ in terms of $\eta(t)$. 
Estimates~\eqref{conversion} and~\eqref{iteration1}, and $\eta(t)\leq B$ yield
\begin{align}
\begin{split}
\left\|\vt(\xx,y,t)\right\| \leq &\ \left\|v(\xx,y,t)\right\| + \left(\left\|u_\infty'\right\|_{\infty} + \left\|\vt_\xx(t)\right\|_{\infty} \right) \left\|\psi(\xx,y,t)\right\| \leq C\left(E_0 + \eta(t)^2\right)\frac{e^{-\frac{|\beta \xx + \gamma y + \alpha \beta t|^2}{M(1 + t)}}}{\sqrt{1+t}},\\
\left\|\vt_\xx(\xx,y,t)\right\| \leq &\ \left\|v_\xx(\xx,y,t)\right\| + \left(\left\|u_\infty''\right\|_{\infty} + \left\|\vt_{\xx \xx}(t)\right\|_{\infty}\right)\left\|\psi(\xx,y,t)\right\|\\ 
&\qquad + \left(\left\|u_\infty'\right\|_{\infty} + \left\|\vt_\xx(t)\right\|_{\infty} \right) \left\|\psi_\xx(\xx,y,t)\right\| \leq C\left(E_0+\eta(t)^2\right)\frac{e^{-\frac{|\beta \xx + \gamma y + \alpha \beta t|^2}{M(1 + t)}}}{\sqrt{t}},\\
\left\|\vt_y(\xx,y,t)\right\| \leq &\ \left\|v_y(\xx,y,t)\right\| + \left\|\vt_{\xx y}(t)\right\|_{\infty}\left\|\psi(\xx,y,t)\right\| +  \left(\left\|u_\infty'\right\|_{\infty} + \left\|\vt_\xx(t)\right\|_{\infty} \right) \left\|\psi_y(\xx,y,t)\right\| \\ \leq &\ C\left(E_0+\eta(t)^2\right)\frac{e^{-\frac{|\beta \xx + \gamma y + \alpha \beta t|^2}{M(1 + t)}}}{\sqrt{t}},
\end{split} \label{iteration3}
\end{align}
with $\xx,y \in \R$.

Subsequently, we bound solutions to the integral system~\eqref{integralscheme3}. First, we establish a short-time bound on $\vt_\xx$. As $\eta(t) \leq B$, estimates~\eqref{nonlest3} and~\eqref{iteration3} yield
\begin{align}
\begin{split}
\left\|\NT_1(\xt,\yt,s)\right\| &\leq C\left(E_0 + \eta(s)^2\right) \frac{e^{-\frac{|\beta \xt + \gamma \yt + \alpha \beta s|^2}{M(1+s)}}}{\sqrt{1+s}},
\end{split} \label{nonlinest21}
\end{align}
with $\xt,\yt \in \R$ and $0 \leq s \leq t$. On the other hand, the pointwise Green's function estimates in Theorem~\ref{theope} imply
\begin{align}
\begin{split}
\left\|G(\xx,\xt,y,s)\right\| &\leq C\frac{e^{-\frac{|\xx-\xt + \alpha s|^2 + |y|^2}{Ms}}}{s},
\end{split} \qquad \qquad \xx,\xt,y \in \R, s > 0. \label{pointwiseestimates21}
\end{align}
Therefore, applying~\eqref{nonlinest21} and~\eqref{pointwiseestimates21} to the integral equation for $\vt_\xx$ in~\eqref{integralscheme3} and employing~\eqref{intid4}, we obtain
\begin{align}
\begin{split}
\left\|\vt_\xx(\xx,y,t)\right\| &\leq C e^{-\frac{|\beta \xx + \gamma y + \alpha \beta t|^2}{M(1 + t)}} \left(\frac{E_0}{\sqrt{1+t}} + \left(E_0 + \eta(t)^2\right) \int_0^t \frac{1}{\sqrt{1+t}} ds\right)\\ & \leq C\left(E_0+\eta(t)^2\right)\left(1+\sqrt{t}\right)e^{-\frac{|\beta \xx + \gamma y + \alpha \beta t|^2}{M(1 + t)}},
\end{split} \label{iteration21}
\end{align}
with $\xx,y \in \R$. Thus, combining~\eqref{iteration21} (for short time) with~\eqref{iteration3} (for large time), we arrive at
\begin{align}
\begin{split}
\left\|\vt_\xx(\xx,y,t)\right\| & \leq C\left(E_0+\eta(t)^2\right)\frac{e^{-\frac{|\beta \xx + \gamma y + \alpha \beta t|^2}{M(1 + t)}}}{\sqrt{1+t}}, \qquad \xx,y\in\R.
\end{split} \label{iteration2}
\end{align}
Second, we bound the third derivatives $\vt_{\xx\xx\xx}$ and $\vt_{\xx \xx y}$. As $\eta(t) \leq B$, estimates~\eqref{nonlest33},~\eqref{iteration3} and~\eqref{iteration2} imply
\begin{align}
\begin{split}
\left\|\NT_2(\xt,\yt,s)\right\| \leq C\left(E_0 + \eta(s)^2\right)\left(1+\frac{1}{\sqrt{s}}\right) \frac{e^{-\frac{|\beta \xt + \gamma \yt + \alpha \beta s|^2}{M(1+s)}}}{\sqrt{1+s}},
\end{split} \label{nonlinest22}
\end{align}
with $\xt,\yt \in \R$ and $0 < s \leq t$. Moreover, Theorem~\ref{theope} yields
\begin{align}
\begin{split}
\left\|\partial_\xx G(\xx,\xt,y,s)\right\| &\leq C\left(1+\frac{1}{\sqrt{s}}\right)\frac{e^{-\frac{|\xx-\xt + \alpha s|^2 + |y|^2}{Ms}}}{s},\\
\left\|\partial_y G(\xx,\xt,y,s)\right\| &\leq C\frac{e^{-\frac{|\xx-\xt + \alpha s|^2 + |y|^2}{Ms}}}{s\sqrt{s}},
\end{split} \qquad \qquad \xx,\xt,y \in \R, s > 0. \label{pointwiseestimates22}
\end{align}
Therefore, applying~\eqref{nonlinest22} and~\eqref{pointwiseestimates22} to the integral system~\eqref{integralscheme3} and employing~\eqref{intid4}, we obtain
\begin{align}
\begin{split}
\left\|D_{\xx,y}^{\mathfrak{a}} \vt_{\xx \xx}(\xx,y,t)\right\| &\!\leq\! C \frac{e^{-\frac{|\beta \xx + \gamma y + \alpha \beta t|^2}{M(1 + t)}}}{\sqrt{1+t}}\left(E_0\left(1\!+\!\frac{1}{\sqrt{t}}\right) + \left(E_0 + \eta(t)^2\right)\! \int_0^t \!\left(1\! +\! \frac{1}{\sqrt{s}}\right) \left(1\!+\!\frac{1}{\sqrt{t-s}}\right)ds\right) \\ &\!\leq\! C\left(\sqrt{t} + \frac{1}{\sqrt{t}}\right)\left(E_0+\eta(t)^2\right),
\end{split} \label{iteration22}
\end{align}
with $\xx,y \in \R$ and $\mathfrak{a} \in \Z_{\geq 0}^2$ satisfying $|\mathfrak{a}| = 1$. Combining the interpolation inequality
\begin{align*} \|u'\|_\infty^2 \leq C\|u\|_\infty \|u''\|_\infty, \qquad u \in W^{2,\infty}(\R,\R),\end{align*}
with the estimates on $\vt_\xx, \vt_{\xx\xx\xx}, \vt_y$ and $\vt_{\xx\xx y}$, established in~\eqref{iteration3} and~\eqref{iteration22}, we arrive at
\begin{align} \|\vt_{\xx\xx}(t)\|_\infty, \|\vt_{\xx y}(t)\|_\infty \leq C\left(E_0+\eta(t)^2\right)\frac{1 + \sqrt{t}}{\sqrt{t}}.\label{iteration31}\end{align}

Now, combining estimates~\eqref{conversion},~\eqref{iteration1},~\eqref{iteration2},~\eqref{iteration22} and~\eqref{iteration31} yields
\begin{align}
\begin{split}
\left\|v_{\xx\xx}(t)\right\|_\infty \leq &\ \left\|\vt_{\xx\xx}(t)\right\|_\infty + C\sum_{j = 0}^2 \left(\left\|\partial_\xx^{j+1} u_\infty(t)\right\|_\infty + \left\|\partial_\xx^{j+1} \vt(t)\right\|_{\infty}\right)\left\|\partial_\xx^{2-j} \psi(t)\right\|_\infty\\
\leq &\ C\left(E_0+\eta(t)^2\right)\frac{1+\sqrt{t}}{\sqrt{t}},\\
\left\|v_{\xx y}(t)\right\|_\infty \leq &\ \left\|\vt_{\xx y}(t)\right\|_\infty + C\left(\left\|\vt_{\xx \xx y}(t)\right\|_{\infty}\left\|\psi(t)\right\|_\infty + \left\|\vt_{\xx y}(t)\right\|_{\infty} \left\|\psi_\xx(t)\right\|_\infty\right.\\
&\qquad \left.+ \left(\left\|\partial_{\xx\xx} u_\infty\right\|_{\infty} + \left\|\vt_{\xx\xx}(t)\right\|_{\infty} \right) \left\|\psi_y(t)\right\|_\infty + \left(\left\|\partial_\xx u_\infty\right\|_{\infty} + \left\|\vt_\xx(t)\right\|_{\infty} \right) \left\|\psi_{\xx y}(t)\right\|_\infty\right)
\\ \leq &\ C\left(E_0+\eta(t)^2\right)\frac{1+\sqrt{t}}{\sqrt{t}}.
\end{split} \label{iteration33}
\end{align}
Finally, estimate~\eqref{etaest} follows from~\eqref{iteration1},~\eqref{iteration3},~\eqref{iteration2},~\eqref{iteration31} and~\eqref{iteration33}, which concludes the proof.
\end{proof}

The proof of Theorem~\ref{maintheorem2} concerning exponentially localized perturbations has a similar set-up as the proof of Theorem~\ref{maintheorem}. However, the obtained decay estimates differ.

\begin{proof}[Proof of Theorem~\ref{maintheorem2}] In this proof, the constant $M > 1$ is as in Theorem~\ref{theope}, whereas $C>1$ denotes a constant, which is independent of $E_0,\xx,\xt,y$ and $t$, that will be taken larger if necessary.

Similar to the proof Theorem~\ref{maintheorem}, we conclude that the template function
\begin{align*}
\eta(t) :=& \sup_{\begin{smallmatrix} 0 \leq s \leq t\\ \xx,y \in \R\end{smallmatrix}} e^{\frac{|\xx + \alpha s|^2 + y^2}{M(1 + s)}} \left[(1+s)\sqrt{1+s}\left(\left\|v(\xx,y,s)\right\| + \sum_{1 \leq |\mathfrak{b}| \leq 3} \left\|D_{\xx,y,s}^\mathfrak{b} \psi(\xx,y,s)\right\|\right) \right.\\
&\quad \left.\phantom{\sum_{1 \leq |\mathfrak{b}| \leq 3}} + (1+s)\left(\left\|\vt(\xx,y,s)\right\| + \left\|\psi(\xx,y,s)\right\|\right) + \frac{\sqrt{s}\sqrt{1+s}}{\log(2+s)}\left(\left\|\vt_\xx(\xx,y,s)\right\| + \left\|v_\xx(\xx,y,s)\right\|\right)\right],
\end{align*}
is well-defined and continuous on some maximal interval $[0,T)$ with $T \in (0,\infty]$ and, if $T$ is finite, it must blow up as $t \uparrow T$. Again we prove that there exists constants $B > 0$ and $C > 1$ such that for all $t \geq 0$ with $\eta(t) \leq B$ we have
\begin{align} \eta(t) \leq C\left(E_0 + \eta(t)^2\right). \label{etaest2}\end{align}
Then, provided $E_0 < \frac{1}{4C}$, we have $\eta(t) \leq 2CE_0$ for \emph{all} $t \geq 0$ by continuous induction, which yields the result.

Thus, we establish the key estimate~\eqref{etaest2} proceeding as in the proof Theorem~\ref{maintheorem}. This time, we consider the closed integral scheme, which consists of the $v$- and $\psi$-equations in~\eqref{integralscheme2}, the $\vt_\xx$- and $\vt_{\xx\xx}$-equations in~\eqref{integralscheme3} and the estimates~\eqref{nonlest3},~\eqref{conversion},~\eqref{nonlest1} and~\eqref{nonlest2}. Moreover, we employ two simpler integral identities
\begin{align}
\begin{split}
\int_{\R^2} e^{-\frac{|\xx-\xt + \alpha t|^2 + |y-\yt|^2}{Mt} - \frac{\xt^2 + \yt^2}{M}}d\xt d\yt &= \frac{M\pi t e^{-\frac{|\xx + \alpha t|^2 + y^2}{M(1 + t)}}}{1+t},\\
\int_{\R^2} e^{-\frac{|\xx-\xt + \alpha (t-s)|^2 + |y-\yt|^2}{M(t-s)} - \frac{|\xt + \alpha s|^2 + \yt^2}{M(1+s)}}d\xt d\yt &= \frac{M\pi (t-s) (1+s) e^{-\frac{|\xx + \alpha t|^2 + y^2}{M(1 + t)}}}{1+t},
\end{split} \label{intid42}
\end{align}
with $\xx,y,s,t \in \R$ satisfying $0 \leq s < t$. These identities are obtained by factorizing the double integral into a product of an integral over $\xt$ and one over $\yt$, each of which can be evaluated using~\eqref{intid}.

Let $B > 1$ be as in~\S\ref{secitscheme} and assume $t > 0$ is such that $\eta(t) \leq B$. As in the proof of Theorem~\ref{maintheorem}, we bound the solutions to the integral systems~\eqref{integralscheme2} and~\eqref{integralscheme3} in terms of $\eta(t)$.  Thus, by applying Theorem~\ref{theope},~\eqref{nonlest2} and~\eqref{intid42} to the integral system~\eqref{integralscheme2}, we obtain
\begin{align}
\begin{split}
\left\|v(\xx,y,t)\right\| &\leq C \frac{e^{-\frac{|\xx + \alpha t|^2 + y^2}{M(1 + t)}}}{1+t}\left(\frac{\eta(t)^2}{(1+t)^2} + \frac{E_0}{\sqrt{1+t}} + \eta(t)^2 \int_0^t \frac{\log(2+s)}{(1+s) \sqrt{s} \sqrt{t-s}} ds\right)\\ &\leq C\left(E_0+\eta(t)^2\right)\frac{e^{-\frac{|\xx + \alpha t|^2 + y^2}{M(1 + t)}}}{(1+t)\sqrt{1+t}},\\
\left\|\psi(\xx,y,t)\right\| &\leq C \frac{e^{-\frac{|\xx + \alpha t|^2 + y^2}{M(1 + t)}}}{1+t}\left(E_0 + \eta(t)^2 \int_0^t \frac{1}{(1+s)^2} ds\right) \leq C\left(E_0 + \eta(t)^2\right)\frac{e^{-\frac{|\xx + \alpha t|^2 + y^2}{M(1 + t)}}}{1+t},\\
\left\|D_{\xx,y,t}^{\mathfrak{b}} \psi(\xx,y,t)\right\| &\leq C \frac{e^{-\frac{|\xx + \alpha t|^2 + y^2}{M(1 + t)}}}{1+t}\left(\frac{E_0}{\sqrt{1+t}} + \eta(t)^2 \int_0^t \frac{1}{(1+s)^2\sqrt{1 + t - s}} ds\right) \\ &\leq C\left(E_0 + \eta(t)^2\right)\frac{e^{-\frac{|\xx + \alpha t|^2 + y^2}{M(1 + t)}}}{(1+t)\sqrt{1+t}},
\end{split} \label{iteration12}
\end{align}
with $\xx,y \in \R$ and $\mathfrak{b} \in \Z_{\geq 0}^3$ with $1 \leq |\mathfrak{b}| \leq 3$. Similarly, by applying~\eqref{nonlest3},~\eqref{pointwiseestimates22} and~\eqref{intid42} to the integral system~\eqref{integralscheme3}, we bound
\begin{align}
\begin{split}
\left\|\vt_\xx(\xx,y,t)\right\| &\leq C \frac{e^{-\frac{|\xx + \alpha t|^2 + y^2}{M(1 + t)}}}{1+t} \left(E_0 \left(1+\frac{1}{\sqrt{t}}\right) + \eta(t)^2 \int_0^t \frac{1}{(1+s)} \left(1+\frac{1}{\sqrt{t-s}}\right) ds\right)\\ & \leq C\left(E_0+\eta(t)^2\right)e^{-\frac{|\xx + \alpha t|^2 + y^2}{M(1 + t)}}\frac{\log(2+t)}{\sqrt{1+t}\sqrt{t}},\\
\left\|\vt_y(\xx,y,t)\right\| &\leq C \frac{e^{-\frac{|\xx + \alpha t|^2 + y^2}{M(1 + t)}}}{1+t} \left(\frac{E_0}{\sqrt{t}} + \eta(t)^2 \int_0^t \frac{1}{(1+s)\sqrt{t-s}} ds\right)\\ & \leq C\left(E_0+\eta(t)^2\right)e^{-\frac{|\xx + \alpha t|^2 + y^2}{M(1 + t)}}\frac{\log(2+t)}{(1+t)\sqrt{t}},\\
\left\|\vt_{\xx\xx}(\xx,y,t)\right\| &\leq C \frac{e^{-\frac{|\xx + \alpha t|^2 + y^2}{M(1 + t)}}}{1+t} \left(E_0 \left(1+\frac{1}{\sqrt{t}}\right) + \int_0^t \left(1+\frac{\log(2+s)}{\sqrt{1+s}\sqrt{s}}\right) \left(1+\frac{1}{\sqrt{t-s}}\right) ds\right) \leq C,
\end{split} \label{iteration222}
\end{align}
with $\xx,y \in \R$. Finally, we use $\psi(s) = 0$ for $0 \leq s \leq 1$, $\eta(t)\leq B$,~\eqref{conversion},~\eqref{iteration12} and~\eqref{iteration222} to bound
\begin{align}
\begin{split}
\left\|\vt(\xx,y,t)\right\| \leq &\ \left\|v(\xx,y,t)\right\| + \left(\left\|u_\infty'\right\|_{\infty} + \left\|\vt_\xx(t)\right\|_{\infty} \right) \left\|\psi(\xx,y,t)\right\| \leq C\left(E_0 + \eta(t)^2\right)\frac{e^{-\frac{|\xx + \alpha t|^2 + y^2}{M(1 + t)}}}{1+t},\\
\left\|v_\xx(\xx,y,t)\right\| \leq &\ \left\|\vt_\xx(\xx,y,t)\right\| + \left(\left\|u_\infty''\right\|_{\infty} + \left\|\vt_{\xx \xx}(t)\right\|_{\infty}\right)\left\|\psi(\xx,y,t)\right\|\\ 
&\quad +  \left(\left\|u_\infty'\right\|_{\infty} + \left\|\vt_\xx(t)\right\|_{\infty} \right) \left\|\psi_\xx(\xx,y,t)\right\|
\leq C\left(E_0 + \eta(t)^2\right)e^{-\frac{|\xx + \alpha t|^2 + y^2}{M(1 + t)}}\frac{\log(2+t)}{\sqrt{t}\sqrt{1+t}},
\end{split} \label{iteration32}
\end{align}
with $\xx,y \in \R$. Estimate~\eqref{etaest2} follows from~\eqref{iteration12},~\eqref{iteration222} and~\eqref{iteration32}, which yields the result.
\end{proof}

\begin{rem} \label{optimality}
It follows from the decomposition of the Green's function $G(\xx,\xt,y,t)$ in Theorem~\ref{theope} that the leading-order linear contribution in the integral equation~\eqref{integralscheme4} for $\vt(\xx,y,t)$ comes from the term
\begin{align}
u_\infty(\xx) \int_{\R^2}  e(\xx,\xt,y-\yt,t) v_0(\xt,\yt) d\xt d\yt. \label{linearest}
\end{align}
Using the explicit formula for the Green's function $e(\xx,\xt,y,t)$ given in Theorem~\ref{theope}, one readily observes from~\eqref{intid4} and~\eqref{intid42} that for $v_0 \in W^{2,\infty}(\R^2,\R^n)$ satisfying~\eqref{v01} the optimal decay rate of~\eqref{linearest} is $t^{-1/2}$, whereas for $v_0 \in W^{1,\infty}(\R^2,\R^n)$ satisfying~\eqref{v02} the optimal decay rate of~\eqref{linearest} is $t^{-1}$. Thus, the decay rates of $\vt(\xx,y,t) = \ut(\xx,y,t) - u_\infty(\xx)$ in Theorems~\ref{maintheorem} and~\ref{maintheorem2} are optimal. Similarly, one can verify the optimality of the other decay rates in Theorems~\ref{maintheorem} and~\ref{maintheorem2}. One finds that we have obtained a (possibly) suboptimal decay rate \emph{only} in estimating $v(\xx,y,t) = \ut(\xx + \psi(\xx,y,t),y,t) - u_\infty(\xx)$ in Theorem~\ref{maintheorem}, in estimating $\vt_\xx(\xx,y,t) = \ut_\xx(\xx,y,t) - u_\infty'(\xx)$ in Theorem~\ref{maintheorem2} and in estimating $\vt_y(\xx,y,t) = \ut_y(\xx,y,t)$ in Theorems~\ref{maintheorem} and~\ref{maintheorem2}. For the first, we establish a decay rate of $\log(2+t)t^{-1}$ in Theorem~\ref{maintheorem}, whereas one would expect that the optimal decay rate is $t^{-1}$ from the estimation of the leading-order linear contribution. For the second, we establish a decay rate of $\log(2+t)t^{-1}$ in Theorem~\ref{maintheorem2}, whereas one would expect $t^{-1}$. Finally, for the third,  we establish decay rates of $t^{-1/2}$ in Theorem~\ref{maintheorem} and of $\log(2+t)t^{-3/2}$ in Theorem~\ref{maintheorem2}, whereas one would expect that the optimal decay rates are $t^{-1}$ and $t^{-3/2}$, respectively. This difference of a factor $t^{-1/2}$, as well as the (possibly) artificial logarithm, might come from our approach to dealing with derivatives of the perturbation in the nonlinearity -- see Remark~\ref{motvt}.
\end{rem}

\appendix
\renewcommand*{\thesection}{\Alph{section}}

\section{Proof of pointwise estimates} \label{proofpointwise}

\subsection{Approach} \label{sec:papproach}

We prove Theorem~\ref{theope} by employing similar methods as in~\cite{JUN}, where pointwise Green's function bounds are obtained for wave trains in reaction-diffusion systems in one spatial dimension. In the proof, we distinguish between two different regimes, that is we obtain pointwise Green's function estimates for $(|\xx - \xt + \alpha t|+|y|)/t$ large and for bounded $(|\xx - \xt + \alpha t|+|y|)/t$, separately.

In the first regime, we proceed as in~\cite[Proposition 2.7]{HOF}. We apply the Laplace transform to~\eqref{Fourier} and write the solution operator $e^{\El_{\nu_y} t}$ as an integral
\begin{align}
 e^{\El_{\nu_y} t} = \int_{\Gamma_\nu} e^{\lambda t} \left(\El_{\nu_y} - \lambda\right)^{-1} d\lambda, \label{LaplaceSol}
\end{align}
for $\nu_y \in \C$, where $\Gamma_{\nu_y}$ is a contour in the resolvent set $\rho(\El_{\nu_y})$ containing the spectrum $\sigma(\El_{\nu_y})$. Consequently, the temporal Green's function $G(\xx,\xt,y,t)$ can be related via the Fourier and Laplace transforms to the resolvent kernel $G_{\nu_y,\lambda}(\xx,\xt)$ associated with the elliptic operator $\El_{\nu_y} - \lambda$. Thus, using~\eqref{interm} and~\eqref{LaplaceSol} we obtain
\begin{align} G(\xx,\xt,y,t) = \frac{1}{2\pi i \sqrt{2\pi}} \int_{\R} \int_{\Gamma_{\nu_y}} e^{i\nu_y y + \lambda t} G_{\nu_y,\lambda}(\xx,\xt)d\lambda d\nu_y. \label{Duhamel}\end{align}
We regard the eigenvalue problem $\El_{\nu_y}u = \lambda u$, associated with the resolvent kernel $G_{\nu_y,\lambda}(\xx,\xt)$, as an ODE in the \emph{complex} parameters $\nu_y$ and $\lambda$. The behavior of this ODE is well-understood when $\nu_y$ and $\lambda$ are contained in a certain set $\Es \subset \C^2$, sufficiently far away from the origin. Thus, for $(\nu_y,\lambda) \in \Es$ pointwise estimates on the resolvent kernel $G_{\nu_y,\lambda}(\xx,\xt)$ can be obtained. These estimates can then be employed to bound the temporal Green's function $G(\xx,\xt,y,t)$ by deforming the contour integrals in $\lambda$ and $\nu_y$ in~\eqref{Duhamel} to lie in $\Es$. We emphasize that, in order to obtain this bound on $G(\xx,\xt,y,t)$, it is crucial that $(|\xx - \xt + \alpha t|+|y|)/t$ is sufficiently large to compensate for the fact that the resulting $\lambda$-contour lies partly in the right half-plane, far away from the origin, causing the factor $e^{\lambda t}$ in~\eqref{Duhamel} to blow-up as $t \to \infty$.

In order to obtain sharp pointwise Green's function estimates for $(|\xx - \xt + \alpha t|+|y|)/t$ bounded, it is necessary to minimize the factor $e^{\lambda t}$ in~\eqref{Duhamel}. This can be established by choosing the contour $\Gamma_{\nu_y} \subset \rho(\El_{\nu_y})$ to lie as much into the left half plane as possible. However, since the spectrum of $\El_0$ touches the imaginary axis at the origin, the obtained decay is not sufficient to close an eventual nonlinear iteration argument -- see Remark~\ref{remtranslational}. Instead, we proceed as in~\cite{JONZ,JUN,OHZUM} and split off the slow-decaying translational mode first using the Fourier-Bloch decomposition~\eqref{Bloch} of the solution operator. More precisely, we split~\eqref{Bloch} into an integral over a small neighborhood $U_0 \subset \Omega$ of $0$, corresponding to the translational part, and a residual integral over $\Omega \setminus U_0$ to which we apply the Laplace transform. Since the resolvent set $\rho(L_{\pmb \nu})$ is contained in $\Re(\lambda) \leq -\eta$ for ${\pmb \nu} \in \Omega \setminus U_0$ by (D3), we can deform the contour to obtain exponential decay from the factor $e^{\lambda t}$ in the Laplace transform formula. Finally, we approximate the translational part by $u_\infty(\xx)e(\xx,\xt,y,t)$ using the expansion in Lemma~\ref{Lem1}.

This section is structured as follows. First, we establish pointwise estimates on the resolvent kernels associated with the operators $\El_{\nu_y} - \lambda,\nu_y \in \R$ and $L_{\pmb \nu} - \lambda,{\pmb \nu} \in \Omega$. Then, we recall some standard, but nontrivial, integrals that we will encounter in our estimation of the Green's function. Finally, we obtain pointwise estimates on the temporal Green's function for $(|\xx - \xt + \alpha t|+|y|)/t$ large and $(|\xx - \xt + \alpha t|+|y|)/t$ bounded, separately.

\subsection{Bounds on the resolvent kernel}

We obtain bounds on the Green's function $G_{\nu_y,\lambda}(\xx,\xt)$ associated with the elliptic operator $\El_{\nu_y} - \lambda$ for $(\lambda,\nu_y)$ in a certain subset of $\C^2$, sufficiently far away from the origin. We proceed as in~\cite[Section 3.2]{BEC} and obtain these bounds from the behavior of the spatial eigenvalues associated with the temporal eigenvalue problem $\El_{\nu_y} u =  \lambda u$.

\begin{lem} \label{high}
Complexify $\nu_y$ and consider the family $\El_{\nu_y}, \nu_y \in \C$ of operators on $L^2(\R,\C^n)$ given by~\eqref{Fourier}. Denote by $G_{\nu_y,\lambda}(\xx,\xt)$ the Green's function associated with $\El_{\nu_y} - \lambda$. There exists $\kappa_1 \in (0,1)$ and $C,R,\kappa_{2} > 1$, such that for all $(\nu_y,\lambda) \in \C^2$ satisfying
\begin{align}
|\nu_y| + |\lambda|^{1/2} > R, \quad \Re(\lambda) \geq - \kappa_1\left(|\Re(\nu_y)|^2 + |\Im(\lambda)|\right) + \kappa_2 |\Im(\nu_y)|^2, \label{sector2}
\end{align}
$\lambda$ is in the resolvent set $\rho(L_{\nu_y})$ and the Green's function may be bounded as follows
\begin{align}
\begin{split}
\left\|D_{\xx,\xt}^\mathfrak{a} G_{\nu_y,\lambda}(\xx,\xt)\right\| &\leq C\frac{e^{-\left(|\lambda|^{1/2} + |\nu_y|\right)|\xx-\xt|/C}}{\left(|\lambda|^{1/2} + |\nu_y|\right)^{1-|\mathfrak{a}|}},
\end{split} \qquad \xx,\xt \in \R, \label{high2}\end{align}
where $\mathfrak{a} \in \Z_{\geq 0}^2$ is a multi-index with $0 \leq |\mathfrak{a}| \leq 1$.
\end{lem}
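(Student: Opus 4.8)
The plan is to treat the temporal eigenvalue problem $\El_{\nu_y} u = \lambda u$ as a first-order ODE system in $\xx$ with coefficients depending holomorphically on the parameters $(\nu_y,\lambda)$, and to obtain the resolvent kernel as a Green's function built from the stable and unstable subspaces of this system. Writing $\El_{\nu_y} u - \lambda u = 0$ explicitly, $D(k^2 u_{\xx\xx} - \nu_y^2 u) + \omega u_\xx + f'(u_\infty(\xx))u - \lambda u = 0$, and setting $\phi = (u, k u_\xx)^\top$, we obtain $\phi_\xx = A(\xx; \nu_y,\lambda)\phi$ with $A$ bounded and $1$-periodic in $\xx$. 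The key structural observation is that for $|\nu_y| + |\lambda|^{1/2}$ large, the leading-order behaviour of $A$ is governed by the constant-coefficient part $D(k^2\partial_{\xx\xx} - \nu_y^2) - \lambda$; rescaling $\xx$ by $\mu := (|\lambda|^{1/2} + |\nu_y|)$ and examining the spatial eigenvalues $\rho$ solving $\det(D k^2 \rho^2 - D\nu_y^2 - \lambda) = 0$ (with the zeroth-order term $f'(u_\infty)$ a relatively bounded perturbation of order $O(1)$), one shows that under the sectorial condition~\eqref{sector2} all $n$ ``stable'' spatial eigenvalues have $\Re(\rho) \leq -c\mu$ and all $n$ ``unstable'' ones have $\Re(\rho) \geq c\mu$ for some $c > 0$, with a uniform spectral gap of width $\gtrsim \mu$ separating the two groups. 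This is exactly the dichotomy exploited in~\cite[Section 3.2]{BEC}, and the role of the $\kappa_2|\Im(\nu_y)|^2$ term in~\eqref{sector2} is to keep $-D\nu_y^2 - \lambda$ out of a neighbourhood of $D$'s spectral cone so that the square roots $\rho$ stay off the imaginary axis.

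The steps, in order, are as follows. First, I would diagonalize the symmetric positive-definite $D$ and reduce to the diagonal case, so that the spatial-eigenvalue analysis decouples into $n$ scalar problems $d_j k^2 \rho^2 = d_j \nu_y^2 + \lambda + O(1)$; a direct computation then verifies, using~\eqref{sector2}, the uniform consistent splitting (exponential dichotomy) with rate $\gtrsim \mu$ for the rescaled system on $\R$, treating the $\xx$-dependent term $f'(u_\infty(\xx)) - f'(u_\infty(\infty))$ — or rather the full periodic zeroth-order coefficient — via a roughness-of-dichotomies argument since it is of size $O(1) = o(\mu^2)$ relative to the leading term. Second, since the system has an exponential dichotomy on all of $\R$ with no bounded solutions, $\lambda \in \rho(\El_{\nu_y})$ and the resolvent kernel $G_{\nu_y,\lambda}(\xx,\xt)$ is given by the standard formula $\Phi^{\mathrm{s}}(\xx)\,\Pi\,\Phi^{\mathrm{s}}(\xt)^{-1}$ for $\xx > \xt$ and $-\Phi^{\mathrm{u}}(\xx)(I-\Pi)\Phi^{\mathrm{u}}(\xt)^{-1}$ for $\xx < \xt$, where $\Phi^{\mathrm{s},\mathrm{u}}$ are the dichotomy projections' ranges' evolutions; the dichotomy bounds give $\|G_{\nu_y,\lambda}(\xx,\xt)\| \leq C\mu^{-1} e^{-c\mu|\xx-\xt|}$, the $\mu^{-1}$ coming from the normalization of the resolvent (inverting a first-order operator whose symbol has size $\mu$). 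Third, the derivative bounds for $|\mathfrak{a}| = 1$ follow because differentiating in $\xx$ or $\xt$ multiplies by a factor of the spatial eigenvalues, hence by $O(\mu)$, turning $\mu^{-1}$ into $\mu^0$; one must be slightly careful at $\xx = \xt$ where $\partial_\xx G$ has a jump, but the bound~\eqref{high2} is stated pointwise away from the diagonal and the jump is itself $O(1)$, consistent with the claimed estimate.

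The main obstacle I anticipate is making the perturbation argument for the dichotomy fully uniform in the two independent large complex parameters $(\nu_y,\lambda)$ over the unbounded sector~\eqref{sector2}: one has to track how the dichotomy rate and constant depend on $\mu$ and on the argument of $-D\nu_y^2 - \lambda$, and show the constants do not degenerate as $(\nu_y,\lambda) \to \infty$ within the sector or as the boundary of the sector is approached. The cleanest way is to rescale $\xx \mapsto \mu\xx$, normalize so the leading spatial eigenvalues are $O(1)$ and bounded away from $i\R$ uniformly (this is precisely where~\eqref{sector2}, in particular the signs of $\kappa_1$ and $\kappa_2$, is used), and then invoke roughness of exponential dichotomies with the $O(1/\mu^2)$ zeroth-order term as the small perturbation — after rescaling, the periodic coefficient $f'(u_\infty(\xx/\mu))$ becomes highly oscillatory but uniformly bounded, so averaging/roughness estimates apply with constants independent of $\mu$. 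Once this uniform dichotomy is in hand, the remaining steps are routine bookkeeping of the Green's-function formula, and I would refer to~\cite{BEC,HOF} for the parts of that bookkeeping that are entirely standard.
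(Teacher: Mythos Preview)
Your proposal is correct and follows essentially the same route as the paper: rescale $\xx$ by $\mu = |\lambda|^{1/2}+|\nu_y|$, show the leading constant-coefficient matrix is uniformly hyperbolic on the sector~\eqref{sector2} (this is where $\kappa_1,\kappa_2$ enter), invoke roughness of exponential dichotomies to absorb the bounded periodic coefficient $f'(u_\infty)$ as an $O(\mu^{-1})$ perturbation, and read off the Green's function and its bounds from the dichotomy evolutions. The only notable difference is that for the $\partial_{\xt}$-bound the paper uses the adjoint identity $G_{\nu_y,\lambda}(\xx,\xt) = H_{\nu_y,\lambda}(\xt,\xx)^*$ rather than differentiating the dichotomy formula directly, which sidesteps any delicacy at $\xx=\xt$; you may find this cleaner than tracking the jump.
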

\begin{proof} Throughout this proof, we denote by $C>1$ a constant, which is independent of $\xx,\xt,\lambda$ and $\nu_y$, that will be taken larger if necessary.

Let $f \in L^2(\R,\C^{n})$. We rescale the inhomogeneous problem $(\El_{\nu_y} - \lambda) u = f$ by setting $z = (|\lambda|^{1/2} + |\nu_y|)\xx$ and $\phi = (u,\partial_ z u)$. This leads to the equivalent problem
\begin{align}
\left(\partial_z - \A(z,\nu_y,\lambda)\right)\phi = g(z,\nu_y,\lambda), \quad g(z,\nu_y,\lambda) := \left(\begin{array}{c} 0 \\ \frac{D^{-1} f\left(\left(|\lambda|^{1/2} + |\nu_y|\right)^{-1} z\right)}{\left(|\lambda|^{1/2} + |\nu_y|\right)^2} \end{array}\right), \label{ODE3}
\end{align}
where
\begin{align*}
\A(z,\nu_y,\lambda) := A(\nu_y,\lambda) + \frac{1}{|\lambda|^{1/2} + |\nu_y|}B(z,\nu_y,\lambda), \quad A(\nu_y,\lambda) := \left(\begin{array}{cc} 0 & \I_n\\ \frac{k^{-2}(\nu_y^2 + D^{-1}\lambda)}{(|\lambda|^{1/2} + |\nu_y|)^2} & 0 \end{array}\right),
\end{align*}
and $B$ is bounded on $\R \times \left\{(\nu_y,\lambda) \in \C^2 : |\nu_y| + |\lambda|^{1/2} > R\right\}$. Since $D^{-1}$ is symmetric and positive-definite, there exists $d_{\mathrm{min}} > 0$ such that $v^* D^{-1} v \geq d_{\mathrm{min}}$ for all $v \in \C^n$ with $\|v\| = 1$. We use the latter to explicitly bound the eigenvalues of the matrix $A(\nu_y,\lambda)$, which are given by the square roots of the eigenvalues of its lower-left block. Thus, take $\kappa_1 \in (0,1)$ and $\kappa_2 > 0$ such that $4\kappa_1\|D^{-1}\| < \min\{2d_{\mathrm{min}},1\}$ and $2\kappa_2 > 5\|D^{-1}\|$. We find that for any $(\nu_y,\lambda) \in \C^2$ satisfying~\eqref{sector2} it holds
\begin{align*}
\left|\Re(\sigma(A(\nu_y,\lambda)))\right| \geq \frac{1}{2k} \sqrt{\min\left\{\tfrac{1}{4}-\left\|D^{-1}\right\|\kappa_1,\tfrac{d_{\mathrm{min}}}{2}\right\}},
\end{align*}
i.e.~$A(\nu_y,\lambda)$ is hyperbolic with $(\nu_y,\lambda)$-independent spectral gap. We apply~\cite[Lemma 1.2]{SAN1993} (see also~\cite{COP}) to the homogeneous problem,
\begin{align}
\left(\partial_z - \A(z,\nu_y,\lambda)\right)\phi = 0. \label{ODE2}
\end{align}
Thus, taking $R > 1$ sufficiently large, system~\eqref{ODE2} has for all $(\nu_y,\lambda) \in \C^2$ satisfying~\eqref{sector2} an exponential dichotomy on $\R$ with constants independent of $\lambda$ and $\nu_y$. Denote by $T^{s,u}_{\nu_y,\lambda}(z,\bar{z})$ the stable and unstable evolution of~\eqref{ODE2} under the exponential dichotomy. Variation of constants yields that the unique solution $\phi \in L^2(\R,\C^n)$ to~\eqref{ODE3} is given by
\begin{align*}
\phi(z,\nu_y,\lambda) = \int_{-\infty}^z T^s_{\nu_y,\lambda}(z,\bar{z}) g(\bar{z},\nu_y,\lambda)d\bar{z} - \int_z^\infty T^u_{\nu_y,\lambda}(z,\bar{z}) g(\bar{z},\nu_y,\lambda)d\bar{z}.
\end{align*}
Thus, rescaling back to the original coordinates, we obtain
\begin{align*}
\left(\begin{array}{c} G_{\nu_y,\lambda}(\xx,\xt)\\
      \partial_\xx G_{\nu_y,\lambda}(\xx,\xt) \end{array}\right)
= \left(\begin{array}{cc} \frac{\I_n}{|\lambda|^{1/2} + |\nu_y|} & 0 \\ 0 & \I_n \end{array}\right) \cdot \begin{cases} T^s_{\nu_y,\lambda}\left(\left(|\lambda|^{1/2} + |\nu_y|\right)\xx,\left(|\lambda|^{1/2} + |\nu_y|\right)\xt\right) \left(\begin{array}{c} 0 \\ D^{-1}\end{array}\right), & \xx \geq \xt,\\
-T^u_{\nu_y,\lambda}\left(\left(|\lambda|^{1/2} + |\nu_y|\right)\xx,\left(|\lambda|^{1/2} + |\nu_y|\right)\xt\right) \left(\begin{array}{c} 0 \\ D^{-1}\end{array}\right), & \xx < \xt,\end{cases}
\end{align*}
which yields the estimates on $G_{\nu_y,\lambda}(\xx,\xt)$ and $\partial_\xx G_{\nu_y,\lambda}(\xx,\xt)$ in~\eqref{high2}. Finally, for the estimate on $\partial_{\xt} G_{\nu_y,\lambda}(\xx,\xt)$ we observe that $G_{\nu_y,\lambda}(\xx,\xt) = H_{\nu_y,\lambda}(\xt,\xx)^*$, where $H_{\nu_y,\lambda}(\xx,\xt)$ is the Green's function associated with the adjoint operator $(\El_{\nu_y} - \lambda)^*$ -- see also~\cite[Lemma 4.3]{ZUH}. Analogous to the above, one obtains
\begin{align*}\left\|\partial_\xx H_{\nu_y,\lambda}(\xx,\xt)\right\| \leq Ce^{-\left(|\lambda|^{1/2} + |\nu_y|\right)|\xx-\xt|/C}, \qquad \xx,\xt \in \R,
\end{align*}
for any $(\nu_y,\lambda) \in \C^2$ satisfying~\eqref{sector2} (by choosing $R > 1$ larger if necessary). This yields the bound on $\partial_\xt G_{\nu_y,\lambda}(\xx,\xt) = \left(\partial_\xt H_{\nu_y,\lambda}(\xt,\xx)\right)^*$ in~\eqref{high2}. \end{proof}

Using the Bloch transform, Lemma~\ref{high} yields pointwise bounds on the resolvent kernel $G_{{\pmb \nu},\lambda}(\xx,\xt)$ associated with the operator $L_{\pmb \nu} - \lambda$ given by~\eqref{Bloch2}.

\begin{cor} \label{high3}
There exists constants $C,R > 1$ and $\mu > 0$ such that for all $({\pmb \nu},\lambda) \in \Omega \times \C$ satisfying
\begin{align*} |\lambda| > R, \quad \Re(\lambda) \geq -\mu \left(\nu_y^2 + |\Im(\lambda)|\right),\end{align*}
$\lambda$ is in the resolvent set $\rho(L_{\pmb \nu})$ and the Green's function $\G_{{\pmb \nu},\lambda}(\xx,\xt)$ associated with the operator $L_{\pmb \nu} - \lambda$ satisfies
\begin{align*}
\begin{split}
\sup_{\nu_x \in [-\pi,\pi]} \left\|D_{\xx,\xt}^\mathfrak{a}\G_{{\pmb \nu},\lambda}(\xx,\xt)\right\| &\leq C|\lambda|^{\frac{|\mathfrak{a}|-1}{2}},
\end{split}
\qquad \xx,\xt \in [0,1],\end{align*}
where $\mathfrak{a} \in \Z_{\geq 0}^2$ is a multi-index with $0 \leq |\mathfrak{a}| \leq 1$.
\end{cor}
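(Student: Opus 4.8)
The plan is to deduce the stated bounds from Lemma~\ref{high} by expressing the periodic resolvent kernel $\G_{{\pmb \nu},\lambda}$ as a periodization of the whole-line resolvent kernel $G_{\nu_y,\lambda}$. The starting observation is that, since $\partial_\xx + i\nu_x = e^{-i\nu_x\xx}\partial_\xx e^{i\nu_x\xx}$, the operator $L_{\pmb \nu}$ in~\eqref{Bloch2} is the conjugate $e^{-i\nu_x\xx}\El_{\nu_y}e^{i\nu_x\xx}$ of the operator $\El_{\nu_y}$ in~\eqref{Fourier} by the (non-$L^2$) multiplier $e^{i\nu_x\xx}$, where $L_{\pmb \nu}$ acts on $L^2_\per([0,1],\C^n)$ and $\El_{\nu_y}$ on $e^{i\nu_x\xx}$ times the $1$-periodic functions. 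As a consequence, for any $\lambda\in\rho(\El_{\nu_y})$ one has $\lambda\in\rho(L_{\pmb \nu})$ --- this also follows directly from the Bloch decomposition $\sigma(\El_{\nu_y}) = \bigcup_{\nu_x\in[-\pi,\pi]}\sigma(L_{(\nu_x,\nu_y)})$, compare~\eqref{specdecomp} --- and a standard computation shows that the resolvent kernel $\G_{{\pmb \nu},\lambda}$ of $L_{\pmb \nu} - \lambda$ is given, for $\xx,\xt\in\R$, by the absolutely convergent series
\begin{align*}
\G_{{\pmb \nu},\lambda}(\xx,\xt) = \sum_{m\in\Z} e^{-i\nu_x(\xx-\xt-m)}\,G_{\nu_y,\lambda}(\xx,\xt+m),
\end{align*}
which is $1$-periodic in $\xx$ and in $\xt$ separately.

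Next I would fix the constants so that the hypotheses of Lemma~\ref{high} are met. Write $\kappa_1,\kappa_2,C_0,R_0$ for the constants $\kappa_1,\kappa_2,C,R$ provided by Lemma~\ref{high}, put $\mu := \kappa_1$, and choose $R > 1$ so large that $R\geq R_0^2$ and $\sqrt{R} \geq C_0\log 2$. Let $({\pmb \nu},\lambda)\in\Omega\times\C$ with $|\lambda| > R$ and $\Re(\lambda)\geq -\mu(\nu_y^2 + |\Im(\lambda)|)$. Because ${\pmb \nu}\in\Omega$ forces $\nu_y\in\R$, we have $\Im(\nu_y) = 0$ and $\Re(\nu_y) = \nu_y$, so the condition~\eqref{sector2} holds: indeed $|\nu_y| + |\lambda|^{1/2}\geq|\lambda|^{1/2} > \sqrt{R}\geq R_0$, and $\Re(\lambda)\geq -\kappa_1(|\Re(\nu_y)|^2 + |\Im(\lambda)|) + \kappa_2|\Im(\nu_y)|^2$. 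Hence $\lambda\in\rho(\El_{\nu_y})$, and~\eqref{high2} provides, with decay rate $\rho := (|\lambda|^{1/2} + |\nu_y|)/C_0 \geq \sqrt{R}/C_0 \geq \log 2$, the bound $\|D^{\mathfrak a}_{\xx,\xt}G_{\nu_y,\lambda}(\xx,\xt)\|\leq C_0(|\lambda|^{1/2}+|\nu_y|)^{|\mathfrak a|-1}e^{-\rho|\xx-\xt|}$ for multi-indices with $|\mathfrak a|\leq 1$.

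Finally I would insert~\eqref{high2} into the periodization formula and differentiate termwise (justified by the locally uniform exponential bounds). A derivative $\partial_\xx$ or $\partial_\xt$ falls either on the phase factor $e^{-i\nu_x(\xx-\xt-m)}$, producing a bounded factor $|\nu_x|\leq\pi$, or on $G_{\nu_y,\lambda}$; using $(|\lambda|^{1/2}+|\nu_y|)^{-1}\leq |\lambda|^{-1/2} < 1$, one finds that each summand is bounded by $C(|\lambda|^{1/2}+|\nu_y|)^{|\mathfrak a|-1}e^{-\rho|\xx-\xt-m|}$. For $\xx,\xt\in[0,1]$ the sum $\sum_{m\in\Z}e^{-\rho|\xx-\xt-m|}$ is bounded by an absolute constant, uniformly in $\xx,\xt$ and in $\rho\geq\log 2$ (the $m=0,\pm1$ terms contribute at most $3$, the geometric tail $2\sum_{k\geq1}e^{-\rho k}$ at most $2$). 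Since $|\mathfrak a|-1\leq 0$ and $|\lambda|^{1/2}+|\nu_y|\geq|\lambda|^{1/2}$, we obtain $\sup_{\nu_x\in[-\pi,\pi]}\|D^{\mathfrak a}_{\xx,\xt}\G_{{\pmb \nu},\lambda}(\xx,\xt)\|\leq C|\lambda|^{(|\mathfrak a|-1)/2}$ for $\xx,\xt\in[0,1]$ and $|\mathfrak a|\leq 1$, as claimed. The only genuinely substantive point is the uniform summability of the series of translates: it forces the decay rate $|\lambda|^{1/2}+|\nu_y|$ to be bounded away from $0$, which is precisely why one must take $|\lambda| > R$ with $R$ large --- over and above what is needed to land inside the region~\eqref{sector2}.
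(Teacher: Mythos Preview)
Your proof is correct and follows the same basic strategy as the paper: relate $\G_{{\pmb \nu},\lambda}$ to the whole-line kernel $G_{\nu_y,\lambda}$ via the conjugation $L_{\pmb \nu} = e^{-i\nu_x\xx}\El_{\nu_y}e^{i\nu_x\xx}$ and then invoke Lemma~\ref{high}.

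The paper's two-line proof simply asserts the identity $\G_{{\pmb \nu},\lambda}(\xx,\xt) = e^{i\nu_x(\xt-\xx)}G_{\nu_y,\lambda}(\xx,\xt)$ for $\xx,\xt\in[0,1]$ and says the bound follows. Taken literally, this is only the $m=0$ term of your periodization series; the right-hand side is not $1$-periodic in $\xx$ (nor in $\xt$), so it cannot be the kernel of $(L_{\pmb \nu}-\lambda)^{-1}$ on $L^2_\per([0,1],\C^n)$. Your version --- writing the full periodization $\sum_{m\in\Z}e^{-i\nu_x(\xx-\xt-m)}G_{\nu_y,\lambda}(\xx,\xt+m)$ and then bounding the geometric tail uniformly once $\rho=(|\lambda|^{1/2}+|\nu_y|)/C_0$ is bounded below --- is the rigorous way to deduce the corollary from Lemma~\ref{high}. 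The omitted terms are exponentially small in $|\lambda|^{1/2}$, so they do not affect the stated power of $|\lambda|$, which is presumably why the paper suppresses them; but your treatment is the more careful one, and it also makes transparent why one needs $|\lambda|>R$ with $R$ large rather than merely $|\lambda|^{1/2}+|\nu_y|>R_0$.
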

\begin{proof}
It holds $\G_{{\pmb \nu},\lambda}(\xx,\xt) = e^{i\nu_x(\xt-\xx)}G_{\nu_y,\lambda}(\xx,\xt)$ for any $({\pmb \nu},\lambda) \in \Omega \times \C$ and $\xx,\xt \in [0,1]$. The result now follows from Lemma~\ref{high}.
\end{proof}
\begin{rem} \label{mid}
Let $K$ be some compact set in $\Omega \times \C$ such that for pairs $({\pmb \nu},\lambda) \in K$ it holds $\lambda \in \rho(L_{\pmb \nu})$. By analyticity of $\G_{{\pmb \nu},\lambda}(\xx,\xt)$ and its $\xx$- and $\xt$-derivatives in $\lambda$ and ${\pmb \nu}$ there exists $C>1$ such that
\begin{align*} \sup_{({\pmb \nu},\lambda) \in K, \xx,\xt \in [0,1]} \left\|D_{\xx,\xt}^\mathfrak{a} \G_{{\pmb \nu},\lambda}(\xx,\xt)\right\| \leq C,\end{align*}
where $\mathfrak{a} \in \Z_{\geq 0}^2$ is a multi-index with $0 \leq |\mathfrak{a}| \leq 1$.
\end{rem}

\subsection{Some useful integrals}
In the pointwise estimation of the Green's function, we need to evaluate some standard, but nontrivial, integrals. First, recall
\begin{align} \int_{\R} e^{az(z-2b)}dz &= \frac{\sqrt{\pi}}{\sqrt{|a|}}e^{-ab^2}, &\quad a < 0, b \in \C, \label{intid1} \end{align}
from~\S\ref{sec:nonstab}. In addition, we use the identities
\begin{align}
\int_{\R} |z|^a e^{-b z^2} dz &= \Gamma\left(\frac{a+1}{2}\right)b^{-(a+1)/2}, &\quad a \geq 0, b > 0, \label{intid2}\\
\int_0^\infty \frac{e^{-bz}}{\sqrt{z}}dz &= \frac{\sqrt{\pi}}{\sqrt{b}}, \ \ \ &b > 0.\label{intid3}
\end{align}

\subsection{Pointwise Green's function estimates for large \texorpdfstring{$(|\xx - \xt + \alpha t|+|y|)/t$}{ }.}

As described in~\S\ref{sec:papproach}, we obtain pointwise bounds on the Green's function and its derivatives for large $(|\xx - \xt + \alpha t|+|y|)/t$ by proceeding as in~\cite[Proposition 2.7]{HOF}.

\begin{lem} \label{Point1}
Assume (D1)-(D3) hold true. There exists constants $m > 0$ and $S,C,M>1$ such that for any $\xx,\xt,y \in \R, t > 0$ satisfying
\begin{align*}
|\xx-\xt+\alpha t|+|y| \geq St,
\end{align*}
the Green's function $G(\xx,\xt,y,t)$ associated with the operator $\partial_t - \El$ enjoys the estimate
\begin{align*}
\begin{split}
\left\|D_{\xx,\xt,y}^\mathfrak{b} G(\xx,\xt,y,t)\right\| &\leq Ct^{-1-\frac{|\mathfrak{b}|}{2}} e^{-mt} e^{-\frac{|\xx-\xt + \alpha t|^2 + |y|^2}{Mt}},
\end{split} \qquad
\end{align*}
where $\mathfrak{b} \in \Z_{\geq 0}^3$ is a multi-index with $0 \leq |\mathfrak{b}| \leq 1$.
\end{lem}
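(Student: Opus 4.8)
The plan is to adapt the large-ray argument of \cite[Proposition~2.7]{HOF} to the present setting, in which there is an additional transverse variable $y$ and the single spatial variable of \cite{JUN} is replaced by the pair $(\xx,\xt)$. The starting point is the Fourier--Laplace representation~\eqref{Duhamel}, in which the contour $\Gamma_{\nu_y}\subset\rho(\El_{\nu_y})$ may be taken to be any curve lying to the right of $\sigma(\El_{\nu_y})$. A $\xx$- or $\xt$-derivative of $G$ falls on $G_{\nu_y,\lambda}$ and is controlled by the $|\mathfrak a|=1$ clause of Lemma~\ref{high}, while a $y$-derivative pulls down a factor $i\nu_y$ in the integrand; in either case the integrand acquires an extra factor of size $|\lambda|^{1/2}+|\nu_y|$, which will account for the improvement from $t^{-1}$ to $t^{-1-\frac{|\mathfrak b|}{2}}$.

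Before estimating I would reduce, using the hypothesis $|\xx-\xt+\alpha t|+|y|\geq St$, to a cleaner statement. This hypothesis forces $t\leq(|\xx-\xt|+|y|)/(S-|\alpha|)$, from which: (i) $|\xx-\xt|+|y|\geq(S-|\alpha|)t$, so for $S$ large this quantity is an arbitrarily large multiple of $t$; (ii) $|\xx-\xt+\alpha t|^2+|y|^2\leq C(|\xx-\xt|^2+|y|^2)$; and (iii) $|\xx-\xt+\alpha t|^2+|y|^2\geq\tfrac12(|\xx-\xt+\alpha t|+|y|)^2\geq\tfrac12 S^2 t^2$, so a Gaussian factor $e^{-(|\xx-\xt+\alpha t|^2+|y|^2)/(2\widetilde M t)}$ carries a free prefactor $e^{-mt}$ with $m\sim S^2/\widetilde M$. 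Combining (ii) and (iii), it therefore suffices to prove
\begin{align*}
\big\|D_{\xx,\xt,y}^{\mathfrak b}G(\xx,\xt,y,t)\big\|\leq Ct^{-1-\frac{|\mathfrak b|}{2}}\,e^{-(|\xx-\xt|^2+|y|^2)/(\widetilde M t)}
\end{align*}
whenever $|\xx-\xt|+|y|\geq\widetilde S t$, where $\widetilde S$ may be chosen as large as convenient.

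Fixing $\xx,\xt,y,t$ in this regime, I would deform both contours in~\eqref{Duhamel} into the region~\eqref{sector2} where Lemma~\ref{high} applies. Concretely, I would choose $\Gamma_{\nu_y}$ to be a leftward-opening parabola-type contour pushed as far left as~\eqref{sector2} allows on its unbounded branches (so that $\Re(\lambda)\lesssim-\kappa_1(|\nu_y|^2+|\Im\lambda|)$ there), modified near its vertex to pass through the saddle of $\lambda t-\tfrac1C|\lambda|^{1/2}|\xx-\xt|$, located where $|\lambda|^{1/2}$ is of order $|\xx-\xt|/t$; the $\nu_y$-contour is kept on $\R$, or shifted into $\C$ by a controlled amount with the penalty $\kappa_2|\Im\nu_y|^2$ in~\eqref{sector2} offset by the corresponding choice of $\Re(\lambda)$. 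The deformation is legitimate because the integrand $e^{i\nu_y y+\lambda t}G_{\nu_y,\lambda}(\xx,\xt)$ is analytic in $(\nu_y,\lambda)$ throughout~\eqref{sector2} by Lemma~\ref{high} and decays at infinity along the parabola (there $e^{\lambda t}$ decays since $\Re(\lambda)\to-\infty$, and~\eqref{high2} supplies the factor $e^{-(|\lambda|^{1/2}+|\nu_y|)|\xx-\xt|/C}$). Crucially, on the chosen contour $|\lambda|^{1/2}+|\nu_y|\gtrsim(|\xx-\xt|+|y|)/t\geq\widetilde S$, so taking $\widetilde S>R$ ensures that the deformed path never enters the disc $\{|\nu_y|+|\lambda|^{1/2}\leq R\}$ on which Lemma~\ref{high} is silent; this is exactly where the largeness of $S$ enters.

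Inserting~\eqref{high2} (together with the factor $i\nu_y$ for a $y$-derivative) into the deformed representation leaves an integral of the schematic form $\int\!\!\int(|\lambda|^{1/2}+|\nu_y|)^{|\mathfrak b|-1}\,e^{\Re(\lambda)t-(|\lambda|^{1/2}+|\nu_y|)|\xx-\xt|/C}\,|d\lambda|\,|d\nu_y|$. On the chosen contour the exponent is bounded above by $-c(|\xx-\xt|^2+|y|^2)/t$: the $|\xx-\xt|$-Gaussian is produced by completing the square in $\lambda t-\tfrac1C|\lambda|^{1/2}|\xx-\xt|$ along the steepest-descent curve through the saddle, while the $y$-Gaussian is produced from the quadratic bound $\Re(\lambda)\lesssim-\kappa_1\nu_y^2$ via $\int e^{i\nu_y y-c\nu_y^2 t}\,d\nu_y\sim t^{-1/2}e^{-y^2/(\widetilde M t)}$. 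Evaluating the remaining one-dimensional integrals with the standard identities~\eqref{intid1}--\eqref{intid3} then yields the prefactor $t^{-1-\frac{|\mathfrak b|}{2}}$ and the Gaussian, which with the reductions above gives the claim. I expect the main obstacle to be the bookkeeping of this joint contour choice: the $\lambda$- and $\nu_y$-contours must satisfy~\eqref{sector2} \emph{simultaneously} along the entire deformed path — in particular one must balance the penalty $+\kappa_2|\Im\nu_y|^2$ against the gain in $\Re(\lambda)t$ — while keeping the total exponent genuinely $\leq -c(|\xx-\xt|^2+|y|^2)/t$, with the complication that $|\nu_y|$ appears both in $e^{i\nu_y y}$ and inside the resolvent bound. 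This two-parameter saddle-point estimate is precisely the computation of \cite[Proposition~2.7]{HOF} (see also~\cite{ZUH,JUN}), which I would carry out with the routine modifications needed to track the transverse variable $y$ and the multi-index $\mathfrak b$.
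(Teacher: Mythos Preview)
Your overall strategy—use the Fourier–Laplace representation~\eqref{Duhamel}, deform both the $\lambda$- and $\nu_y$-contours into the region~\eqref{sector2} where Lemma~\ref{high} applies, and read off the Gaussian from the resulting exponent—is exactly the paper's approach, which follows \cite[Proposition~2.7]{HOF}. The reduction you give from $|\xx-\xt+\alpha t|$ to $|\xx-\xt|$ using the largeness of $S$ is also what the paper does (its inequality~\eqref{large}).

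There is, however, a genuine inconsistency in the specific contour you describe. You place the $\lambda$-vertex at the saddle of $\lambda t-\tfrac1C|\lambda|^{1/2}|\xx-\xt|$, i.e.\ at $|\lambda|^{1/2}\sim|\xx-\xt|/t$, and then propose to obtain the $y$-Gaussian from the real-$\nu_y$ integral $\int e^{i\nu_y y-c\nu_y^2 t}d\nu_y$. But in the regime where $|\xx-\xt|/t$ is small while $|y|/t$ is large (which is allowed by the hypothesis), this vertex sits near the origin, and near $\nu_y=0$ on the real axis one has $|\lambda|^{1/2}+|\nu_y|$ small, so the contour enters the disc $\{|\nu_y|+|\lambda|^{1/2}\leq R\}$ where Lemma~\ref{high} is unavailable. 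Your claimed lower bound $|\lambda|^{1/2}+|\nu_y|\gtrsim(|\xx-\xt|+|y|)/t$ is therefore not justified by this choice.

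The paper resolves this by coupling the two deformations rather than treating them separately: it sets
\[
A=\epsilon_0^2\,\frac{(|\xx-\xt+\alpha t|+|y|)^2}{t^2},\qquad a=\mathrm{sgn}(y)\sqrt{A/\kappa_2},
\]
shifts the $\nu_y$-contour to $\R+ia$, and takes $\Gamma_{\nu_y}=\{\Re(\lambda)=A-\kappa_1(|\Re(\nu_y)|^2+|\Im(\lambda)|)\}$. The choice $A=\kappa_2 a^2=\kappa_2|\Im(\nu_y)|^2$ is precisely what makes~\eqref{sector2} hold as an equality along $\Gamma_{\nu_y}$, and $|\nu_y|^2+|\lambda|\geq A/(1+\kappa_1)$ is then automatic, so $|\nu_y|+|\lambda|^{1/2}>R$ everywhere once $S$ is large. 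The $y$-Gaussian comes not from the real-$\nu_y$ quadratic but from the shift: $e^{i(z+ia)y}$ contributes $e^{-ay}=-|y|\sqrt{A/\kappa_2}$, which together with the resolvent decay $-\sqrt{A/(1+\kappa_1)}|\xx-\xt|/C$ beats $At$ for $\epsilon_0$ small. This is the ``bookkeeping of the joint contour choice'' you anticipated as the main obstacle; the paper's one-line resolution is to tie $A$ and $a$ together via $A=\kappa_2 a^2$.
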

\begin{proof} Throughout this proof, we denote by $C>1$ a constant, which is independent of $\xx,\xt,y$ and $t$, that will be taken larger if necessary.

Complexify $\nu_y$ and consider the family $\El_{\nu_y}, \nu_y \in \C$ of operators analytic in $\nu_y$ given by~\eqref{Fourier}. Let $R,\kappa_{1,2} > 0$ be as in Lemma~\ref{high} and take
\begin{align*} A := \epsilon_0^2 \frac{(|\xx - \xt + \alpha t| + |y|)^2}{t^2}, \quad a := \mathrm{sgn}(y)\sqrt{\frac{A}{\kappa_2}},\end{align*}
where $\epsilon_0 > 0$ is to be defined. We consider~\eqref{Duhamel} as a complex contour integral in $\nu_y$. By Cauchy's integral theorem we have
\begin{align}
\begin{split}
D_{\xx,\xt}^\mathfrak{a} G(\xx,\xt,y,t) &= \frac{1}{2\pi i \sqrt{2\pi}} \int_{\R} \int_{\Gamma_{\nu_y}} e^{i(z+ia)y + \lambda t} D_{\xx,\xt}^\mathfrak{a} G_{z+ia,\lambda}(\xx,\xt) d\lambda dz,\\
\partial_y G(\xx,\xt,y,t) &= \frac{1}{2\pi \sqrt{2\pi}} \int_{\R} \int_{\Gamma_{\nu_y}} (z+ia) e^{i(z+ia)y + \lambda t} G_{z+ia,\lambda}(\xx,\xt) d\lambda dz,
\end{split}
\label{Duhamel2}\end{align}
where $\mathfrak{a} \in \Z_{\geq 0}^2$ is a multi-index with $0 \leq |\mathfrak{a}| \leq 1$. We take the contour
\begin{align*} \Gamma_{\nu_y} := \{\lambda \in \C \colon \Re(\lambda) = A - \kappa_1(|\Re(\nu_y)|^2 + |\Im(\lambda)|)\}.\end{align*}
Note that for every $\lambda \in \Gamma_{\nu_y}$ it holds
\begin{align}|\nu_y|^2 + |\lambda| \geq \max\left\{|\Re(\lambda)|, \frac{A - \Re(\lambda)}{\kappa_1}\right\} \geq \frac{A}{1 + \kappa_1}. \label{normb}\end{align}
By~\eqref{normb} and the fact that $A = \kappa_2 a^2$, every $\nu_y \in \{z + ia : z \in \R\}$ and $\lambda \in \Gamma_{\nu_y}$ satisfy~\eqref{sector2} for $A > 0$ sufficiently large. Therefore, every $\lambda \in \Gamma_{\nu_y}$ is in the resolvent set of $\El_{\nu_y}$ for $\Im(\nu_y) = a$. Taking the lower bound $S > 0$ sufficiently large, we may assume
\begin{align}
\frac{|y| + |\xx-\xt|}{t} \geq \frac{|y| + |\xx - \xt + \alpha t|}{t} - |\alpha| \geq \frac{|y| + |\xx - \xt + \alpha t|}{2t} . \label{large}
\end{align}
Finally, by taking $\epsilon_0 > 0$ sufficiently small, we are able to estimate~\eqref{Duhamel2} using Lemma~\ref{high} and identities~\eqref{intid2},~\eqref{intid3},~\eqref{normb} and~\eqref{large}
\begin{align*} \|D^\mathfrak{a}_{\xx,\xt} G(\xx,\xt,y,t)\| &\leq C \int_{\R} \int_{\Gamma_{\nu_y}} e^{-a y + \Re(\lambda) t}\|D^\mathfrak{a}_{\xx,\xt} G_{z+ia,\lambda}(\xx,\xt)\| d\lambda dz\\
&\leq C \int_{\R} \int_{\R} e^{-a y + At - \sqrt{\frac{A}{1+\kappa_1}}|\xx-\xt|/C - \kappa_1 (|\Im(\lambda)| + z^2)t} |\Im(\lambda)|^{\frac{|\mathfrak{a}|-1}{2}} d|\Im(\lambda)| dz\\
&\leq C \int_{\R} \int_{\R} e^{(\epsilon_0^2 - \epsilon_0/C)\frac{(|y| + |\xx - \xt + \alpha t|)^2}{t} - \kappa_1 (|\Im(\lambda)| + z^2)t} |\Im(\lambda)|^{\frac{|\mathfrak{a}|-1}{2}} d|\Im(\lambda)| dz\\
&\leq Ct^{-1-\frac{|\mathfrak{a}|}{2}} e^{-mt} e^{-\frac{|\xx-\xt + \alpha t|^2 + |y|^2}{Mt}},
 \end{align*}
for some $m > 0$ and $M > 1$, where in the latter inequality we used that the lower bound $S > 0$ can be taken larger if necessary. Analogously, we estimate
\begin{align*} \|\partial_y G(\xx,\xt,y,t)\| &\leq C \int_{\R} \int_{\Gamma_{\nu_y}} \left(|z|+|a|\right)e^{-a y + \Re(\lambda) t}\| G_{z+ia,\lambda}(\xx,\xt)\| d\lambda dz\\
&\leq C \int_{\R} \int_{\R} \left(|z|+\sqrt{A}\right)e^{-a y + At - \sqrt{\frac{A}{1+\kappa_1}}|\xx-\xt|/C - \kappa_1 (|\Im(\lambda)| + z^2)t} |\Im(\lambda)|^{-\frac{1}{2}} d|\Im(\lambda)| dz\\
&\leq C \int_{\R} \int_{\R} \left(|z|+\frac{|y| + |\xx - \xt + \alpha t|}{t}\right)\frac{e^{(\epsilon_0^2 - \epsilon_0/C)\frac{(|y| + |\xx - \xt + \alpha t|)^2}{t} - \kappa_1 (|\Im(\lambda)| + z^2)t}}{\sqrt{|\Im(\lambda)|}} d|\Im(\lambda)| dz\\
&\leq Ct^{-\frac{3}{2}} e^{-m t} e^{-\frac{|\xx-\xt + \alpha t|^2 + |y|^2}{Mt}},
\end{align*}
for some $m > 0$ and $M > 1$.
\end{proof}

\subsection{Pointwise Green's function estimates for bounded \texorpdfstring{$(|\xx-\xt+\alpha t|+|y|)/t$}{ }}
As described in~\S\ref{sec:papproach}, we establish pointwise bounds on the temporal Green's function and its derivatives for bounded $(|\xx - \xt + \alpha t|+|y|)/t$ following~\cite{JUN,OHZUM}.
\begin{lem} \label{Point2}
Assume (D1)-(D3) hold true and let $S > 1$. There exists constants $C,M>1$ such that for any $\xx,\xt,y \in \R, t > 0$ satisfying
\begin{align*}
|\xx-\xt+\alpha t|+|y| \leq St,
\end{align*}
the Green's function $G(\xx,\xt,y,t)$ associated with the operator $\partial_t - \El$ can be decomposed as
\begin{align*}
G(\xx,\xt,y,t) = \frac{1}{4\pi t\sqrt{d_\perp \theta}}e^{-\frac{|\xx-\xt+\alpha t|^2}{4\theta t} - \frac{|y|^2}{4d_\perp t}} u_\infty'(\xx) u_{\ad} (\xt)^* + \G(\xx,\xt,y,t)
\end{align*}
where $\G(\xx,\xt,y,t)$ enjoys the estimates
\begin{align*}
\left\|\G(\xx,\xt,y,t)\right\| &\leq Ct^{-1}(1+t)^{-\frac{1}{2}}e^{-\frac{|\xx-\xt + \alpha t|^2 + |y|^2}{Mt}},\\
\left\|D_{\xx,\xt,y}^\mathfrak{a} \G(\xx,\xt,y,t)\right\| &\leq Ct^{-\frac{3}{2}}e^{-\frac{|\xx-\xt + \alpha t|^2 + |y|^2}{Mt}},
\end{align*}
where $\mathfrak{a} \in \Z_{\geq 0}^3$ is a multi-index with $|\mathfrak{a}| = 1$.
\end{lem}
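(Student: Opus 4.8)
The plan is to follow the Bloch--Laplace scheme of~\cite{JUN,OHZUM}, adapted to the transverse variable $y$ as in~\cite{HOF}, and to treat short and long times separately. First I would observe that for $t\leq 1$ the cut-off $\chi(t)$ vanishes, so $\G=G$, and it suffices to produce the bounds $\|D_{\xx,\xt,y}^{\mathfrak a}G(\xx,\xt,y,t)\|\leq Ct^{-1-|\mathfrak a|/2}e^{-(|\xx-\xt|^2+|y|^2)/(Mt)}$, which in the regime $|\xx-\xt+\alpha t|+|y|\leq St$ (where $|\xx-\xt|$ is then comparable to $|\xx-\xt+\alpha t|$ up to a bounded additive constant) immediately give the stated estimates, using $t^{-1}\leq Ct^{-1}(1+t)^{-1/2}$ on $t\leq 1$. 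These short-time bounds can either be quoted from the parabolic theory~\cite{HEN,LUN} or, staying within the present machinery, obtained from the rescaled resolvent bounds of Lemma~\ref{high} by the argument of Lemma~\ref{Point1}: the $t^{-1}$ and $t^{-3/2}$ singularities are carried entirely by the large-frequency part, the low-frequency contributions being more regular there. The bulk of the work therefore concerns $t\geq 1$.

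For $t\geq 1$ I would start from the Fourier--Bloch representation~\eqref{Bloch} and split the ${\pmb\nu}$-integral at $|{\pmb\nu}|=\delta$ with $\delta\in(0,\epsilon]$ small. On the high-frequency set $|{\pmb\nu}|\geq\delta$, assumptions (D2)--(D3) provide the uniform spectral gap $\Re\,\sigma(L_{\pmb\nu})\leq-\eta\min\{\delta^2,1\}$; writing $e^{L_{\pmb\nu}t}=e^{L_{\pmb\nu}(t-1)}e^{L_{\pmb\nu}}$ to gain one unit of smoothing (hence control of the $\xx,\xt,y$-derivatives of the kernel), using that $\sigma(L_{\pmb\nu})$ recedes like $-c|{\pmb\nu}|^2$ as $|{\pmb\nu}|\to\infty$ to secure convergence of the ${\pmb\nu}$-integral — the large-$\nu_y$ regime being covered by Lemma~\ref{high} and Corollary~\ref{high3}, the compact regime by analyticity as in Remark~\ref{mid} — this contribution and its first derivatives are $\leq Ce^{-c\delta^2 t}$. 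On the low-frequency set $|{\pmb\nu}|\leq\delta$, Lemma~\ref{Lem1} and standard perturbation theory give the splitting $e^{L_{\pmb\nu}t}=e^{\lambda_0({\pmb\nu})t}\Pi_{\pmb\nu}+e^{L_{\pmb\nu}t}(I-\Pi_{\pmb\nu})$ with rank-one, ${\pmb\nu}$-analytic spectral projection $\Pi_{\pmb\nu}=\langle\,\cdot\,,q_\ad({\pmb\nu})\rangle_2\,q({\pmb\nu})$, the complement semigroup having spectrum in $\Re\,\lambda\leq-\eta$; the same smoothing-plus-gap argument bounds its contribution and first derivatives by $Ce^{-\eta t/2}$. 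Since in the bounded regime $(|\xx-\xt+\alpha t|^2+|y|^2)/(Mt)\leq CS^2t/M$, all of these exponentially small terms are $\leq Ct^{-3/2}e^{-(|\xx-\xt+\alpha t|^2+|y|^2)/(Mt)}$ once $M$ is chosen large relative to $S$.

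The heart of the argument is the eigenprojection piece $G_{\mathrm{low,eig}}(\xx,\xt,y,t)=\frac{1}{2\pi}\int_{|{\pmb\nu}|\leq\delta}e^{i\nu_x(\xx-\xt)+i\nu_y y}e^{\lambda_0({\pmb\nu})t}q({\pmb\nu})(\xx)q_\ad({\pmb\nu})(\xt)^*\,d{\pmb\nu}$, which I would expand with Lemma~\ref{Lem1}. Replacing $q({\pmb\nu})(\xx)q_\ad({\pmb\nu})(\xt)^*$ by $\tilde u_\infty'(\xx)\tilde u_\ad(\xt)^*=u_\infty'(\xx)u_\ad(\xt)^*$ costs an analytic error of order $\ord(|{\pmb\nu}|)$; replacing $\lambda_0({\pmb\nu})$ by $i\alpha\nu_x-\theta\nu_x^2-d_\perp\nu_y^2$ costs, via $e^{\lambda_0 t}-e^{(i\alpha\nu_x-\theta\nu_x^2-d_\perp\nu_y^2)t}=e^{(i\alpha\nu_x-\theta\nu_x^2-d_\perp\nu_y^2)t}(e^{\h({\pmb\nu})t}-1)$ and $|e^{\h({\pmb\nu})t}-1|\leq C|{\pmb\nu}|^3t\,e^{\frac12(\theta\nu_x^2+d_\perp\nu_y^2)t}$ — after choosing $\delta$ so small that $|\h({\pmb\nu})|\leq\tfrac12(\theta\nu_x^2+d_\perp\nu_y^2)$ on $|{\pmb\nu}|\leq\delta$ — an error of order $\ord(|{\pmb\nu}|^3t)\,e^{-\frac12(\theta\nu_x^2+d_\perp\nu_y^2)t}$; and the surviving leading term, after extending the truncated integral to $\R^2$ with an $e^{-c\delta^2 t}$ error and combining $e^{i\nu_x(\xx-\xt)}e^{i\alpha\nu_x t}=e^{i\nu_x(\xx-\xt+\alpha t)}$, evaluates by~\eqref{intid1} to $u_\infty'(\xx)u_\ad(\xt)^*$ times the product of the Gaussians in $\xx-\xt+\alpha t$ and in $y$, i.e.\ to $u_\infty'(\xx)e(\xx,\xt,y,t)$ up to the $\chi$-cut-off (the discrepancy being supported in $t\in[1,2]$, where every target bound is trivial). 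Thus $\G=G-u_\infty'e$ is the sum of the high-frequency, complement and expansion errors, each of which is controlled by a Gaussian-integral estimate of the form $\int_{|{\pmb\nu}|\leq\delta}|{\pmb\nu}|^{\ell}t^{j}e^{-c(\nu_x^2+\nu_y^2)t}e^{i\nu_x(\xx-\xt+\alpha t)+i\nu_y y}p({\pmb\nu},\xx,\xt)\,d{\pmb\nu}\leq Ct^{\,j-(\ell+2)/2}e^{-(|\xx-\xt+\alpha t|^2+|y|^2)/(Mt)}$ with $p$ analytic and bounded, yielding $t^{-3/2}$ for each correction. The derivatives $\partial_\xx,\partial_\xt,\partial_y$ stay within this framework: $\partial_y$ merely brings down a harmless $i\nu_y$, while $\partial_\xx,\partial_\xt$ fall either on $e^{i\nu_x(\xx-\xt)}$, gaining a factor $\nu_x$, or on the bounded analytic coefficients $q({\pmb\nu}),q_\ad({\pmb\nu})$, whose $\xx$- and $\xt$-derivatives are again analytic and bounded by Lemma~\ref{Lem1} — so no estimate is worsened and all first derivatives of $\G$ obey the claimed $t^{-3/2}$ bound, while $\partial_\xx(u_\infty'e)$ and the corresponding leading part of $\partial_\xx G_{\mathrm{low,eig}}$ cancel exactly.

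The step I expect to be the main obstacle is the Gaussian-integral bound just quoted: one must recover the factor $e^{-(|\xx-\xt+\alpha t|^2+|y|^2)/(Mt)}$ uniformly over the whole regime $|\xx-\xt+\alpha t|+|y|\leq St$, not only for arguments of size $\lesssim\sqrt t$. This is the standard contour-shift / completion-of-the-square argument (cf.~\cite{ZUH,JUN}): writing $\zeta_1'=\xx-\xt+\alpha t$, one shifts $\nu_x\mapsto\nu_x+i\sigma_x$ with $\sigma_x=-\min\{\delta/2,\,|\zeta_1'|/(2\theta t)\}\,\mathrm{sgn}(\zeta_1')$ and $\nu_y$ analogously, which is legitimate by the analyticity of $\lambda_0,q,q_\ad$ and their $\xx$-, $\xt$-derivatives on a fixed complex neighbourhood of $0$ (Lemma~\ref{Lem1}); for small argument the shift is the exact completion of the square and produces the Gaussian, whereas for large argument the growth $e^{\theta(\delta/2)^2t}$ created by the fixed shift $\delta/2$ is reabsorbed into the resulting $e^{-(\delta/2)|\zeta_1'|}$ (using $|\zeta_1'|>\theta\delta t$ in that case) and then, since $|\zeta_1'|\leq St$, traded for $e^{-(\zeta_1')^2/(Mt)}$ on taking $M$ large depending on $S$ and $\delta$; the $\nu_y$-factor $e^{-y^2/(Mt)}$ is obtained identically. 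Assembling the short-time bounds, the exponentially small high-frequency and complement contributions, the explicit leading Gaussian $u_\infty'e$, and the $t^{-3/2}$ expansion errors then gives the asserted decomposition of $G$ together with the estimates on $\G$.
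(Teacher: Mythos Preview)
Your strategy matches the paper's proof closely: Fourier--Bloch split into a critical rank-one piece and a complement, expansion of the eigenprojection integrand about ${\pmb\nu}=0$ with the cubic remainder $\h$, contour shifts in the complexified ${\pmb\nu}$-variables to extract the Gaussian spatial factor, and Laplace-transform/resolvent bounds (Corollary~\ref{high3}, Remark~\ref{mid}) for the spectrally-gapped pieces. The paper organizes the split slightly differently---using the smooth cutoff $\phi({\pmb\nu})P({\pmb\nu})$ rather than a hard truncation at $|{\pmb\nu}|=\delta$---and it does \emph{not} separate short and long times: the $t^{-1}$ and $t^{-3/2}$ singularities emerge directly from the Laplace contour integrals, so there is no need for an external short-time parabolic estimate. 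Your route through a short/long-time split is fine, just less streamlined.

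There is one genuine slip you should fix. You write ``for $t\leq 1$ the cut-off $\chi(t)$ vanishes, so $\G=G$'' and later ``$\G=G-u_\infty'e$''. But in Lemma~\ref{Point2} the leading term carries \emph{no} cut-off $\chi$; you are confusing the remainder $\G$ of this lemma with the remainder $\widetilde G$ of Theorem~\ref{theope}. The raw Gaussian $\tfrac{1}{4\pi t\sqrt{d_\perp\theta}}e^{-\cdots}u_\infty'(\xx)u_\ad(\xt)^*$ is present for all $t>0$, so $\G\neq G$ for small $t$. This does not damage your argument in substance---the raw Gaussian and its first derivatives themselves obey the target bounds $Ct^{-1}e^{-\cdots}$ and $Ct^{-3/2}e^{-\cdots}$, so the triangle inequality recovers the estimate on $\G$ once you have the short-time bounds on $G$---but the sentence as written is false and should be replaced by exactly that observation.
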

\begin{proof} Throughout this proof, we denote by $C>1$ a constant, which is independent of $\xx,\xt,y$ and $t$, that will be taken larger if necessary.

Without loss of generality we may assume that $\{\nu \in \C^2 \colon |\nu| \leq 2\epsilon\} \subset U$, where $\epsilon$ is as in (D2)-(D3) and $U \subset \C^2$ is the neighborhood of $0$ from Lemma~\ref{Lem1}. Let $\phi \colon \C^2 \to [0,1]$ be a smooth cut-off function satisfying $\phi({\pmb \nu}) = 1$ if $|{\pmb \nu}| \leq \epsilon$ and $\phi({\pmb \nu}) = 0$ if $|{\pmb \nu}| \geq 2\epsilon$. Let $\delta_z^m \colon \R^m \to \R$ be the Dirac delta function centered at $z \in \R^m$. Applying the Fourier-Bloch transform -- see~\S\ref{s2.1} -- yields $2\pi \check{\delta}^2_{(\xt,0)}({\pmb \nu},\xx) = e^{-i\nu_x\xt}\delta^1_{\xt}(\xx)$. So, using~\eqref{Bloch} and Lemma~\ref{Lem1}, we decompose the Green's function of problem~\eqref{lin} as
\begin{align*}
G(\xx,\xt,y,t) &= \left[e^{\El t} \delta^2_{(\xt,0)}\right](\xx,y) = I + II,\\
I := \frac{1}{4\pi^2}\int_\Omega e^{i{\pmb \nu} \cdot {\pmb \xi}} \phi({\pmb \nu})P({\pmb \nu})e^{L_{{\pmb \nu}} t} \delta^1_\xt(\xx)d{\pmb \nu}, &\qquad II := \frac{1}{4\pi^2}\int_\Omega e^{i{\pmb \nu} \cdot {\pmb \xi}} (1-\phi({\pmb \nu})P({\pmb \nu}))e^{L_{{\pmb \nu}} t} \delta^1_\xt(\xx)d{\pmb \nu},
\end{align*}
where ${\pmb \xi} := (\xx-\xt,y)$ and $P({\pmb \nu}) := \langle q_\ad({\pmb \nu}),\cdot\rangle_2 q({\pmb \nu})$ is the spectral projection in $L_\per^2([0,1],\C^n)$ onto $\ker(L_{\pmb \nu} - \lambda_0({\pmb \nu}))$. By Lemma~\ref{Lem1}, $P({\pmb \nu})$ is well-defined for any ${\pmb \nu} \in U$ and depends analytically on ${\pmb \nu}$. It holds $P({\pmb \nu}) e^{L_{\pmb \nu} t} = e^{\lambda_0({\pmb \nu})t}P({\pmb \nu})$ for ${\pmb \nu} \in U$. Thus, we calculate using~\eqref{intid1} and Lemma~\ref{Lem1}
\begin{align*}
4\pi^2 I &= \int_\Omega e^{i{\pmb \nu} \cdot {\pmb \xi}} \phi({\pmb \nu})P({\pmb \nu})e^{L_{{\pmb \nu}} t} \delta^1_{\xt}(\xx)d{\pmb \nu}\\
  &= \int_{|{\pmb \nu}|_{\R^2} \leq 2\epsilon} e^{i{\pmb \nu} \cdot {\pmb \xi}} \phi({\pmb \nu})e^{\lambda_0({\pmb \nu}) t} q({\pmb \nu},\xx) q_\ad({\pmb \nu},\xt)^* d{\pmb \nu}\\
  &= \int_{\R^2} e^{i{\pmb \nu} \cdot {\pmb \xi}} e^{(i\alpha\nu_x - \theta \nu_x^2 - d_\perp \nu_y^2)t} u_\infty'(\xx) u_{\ad} (\xt)^* d{\pmb \nu} - \int_{|{\pmb \nu}|_{\R^2} \geq 2\epsilon} e^{i{\pmb \nu} \cdot {\pmb \xi}} e^{(i\alpha\nu_x - \theta \nu_x^2 - d_\perp \nu_y^2)t} u_\infty'(\xx) u_{\ad} (\xt)^* d{\pmb \nu}\\
  & \qquad + \int_{|{\pmb \nu}|_{\R^2} \leq 2\epsilon} e^{i{\pmb \nu} \cdot {\pmb \xi}} e^{(i\alpha\nu_x - \theta \nu_x^2 - d_\perp \nu_y^2)t} \left(e^{\h({\pmb \nu})t} \phi({\pmb \nu}) q({\pmb \nu},\xx) q_\ad({\pmb \nu},\xt)^* - q(0,\xx) q_\ad(0,\xt)^*\right) d{\pmb \nu}\\
  &= \frac{\pi}{t\sqrt{d_\perp \theta}}e^{-\frac{|\xx-\xt+\alpha t|^2}{4\theta t} - \frac{|y|^2}{4d_\perp t}} u_\infty'(\xx) u_{\ad} (\xt)^* + II' + III'.
\end{align*}
Let $\mathfrak{b} \in \Z_{\geq 0}^3$ with $0 \leq |\mathfrak{b}| \leq 1$. Using~\eqref{intid2} we estimate
\begin{align*} \left\|D^\mathfrak{b}_{\xx,\xt,y} II'\right\| &\leq C\int_{|{\pmb \nu}|_{\R^2} \geq 2\epsilon} \left(1+|\nu_x| + |\nu_y|\right)^{|\mathfrak{b}|} e^{-(\theta \nu_x^2 + d_\perp \nu_y^2)t} d{\pmb \nu}\\
&\leq C \int_{-2\epsilon}^{2\epsilon} \int_{\R} \left(1+|\nu_x| + |\nu_y|\right)^{|\mathfrak{b}|} e^{-\theta \nu_x^2t}d\nu_x e^{-d_\perp\nu_y^2t} d\nu_y \\
& \qquad + C\int_{\R} \int_{-2\epsilon}^{2\epsilon} \left(1+|\nu_x| + |\nu_y|\right)^{|\mathfrak{b}|} e^{-\theta \nu_x^2t}d\nu_x e^{-d_\perp\nu_y^2t} d\nu_y\\
&\leq Ct^{-1}\left(1 + t^{-\frac{|\mathfrak{b}|}{2}}\right)e^{-2\epsilon^2 \min\{d_\perp,\theta\} t},\\
&\leq C t^{-1-\frac{|\mathfrak{b}|}{2}}e^{-m t}e^{-\frac{|\xx-\xt + \alpha t|^2 + |y|^2}{Mt}},
\end{align*}
for some $m > 0$ and $M > 1$, using that $(|\xx-\xt+\alpha t|+|y|)/t$ is bounded in the latter inequality.

The estimation of $III'$ is more elaborate. As in~\cite{OHZUM} define
\begin{align*} a_1 := \frac{\xx - \xt + \alpha t}{2\theta t}, \quad a_2 := \frac{y}{2d_\perp t}, \quad \tilde{a}_{1,2} := \mathrm{sgn}(a_{1,2})\min\{2\epsilon,|a_{1,2}|\} ,\end{align*}
and abbreviate
\begin{align*} h(\nu_x,\nu_y,\xx,\xt,y,t) := e^{i{\pmb \nu} \cdot {\pmb \xi}} e^{(i\alpha\nu_x - \theta \nu_x^2 - d_\perp \nu_y^2)t} \left(e^{\h({\pmb \nu})t} \phi({\pmb \nu}) q({\pmb \nu},\xx) q_\ad({\pmb \nu},\xt)^* - q(0,\xx) q_\ad(0,\xt)^*\right).\end{align*}
We consider $III'$ as a double complex line integral. By Cauchy's integral theorem the integral of $h(\cdot,\nu_y,\xx,\xt,t)$ over $[-2\epsilon,2\epsilon]$ equals the integral over the other three sides of the rectangle in $\C$ formed by the points $\pm 2\epsilon$, $\pm 2\epsilon + i\tilde{a}_1$. Denote by $\Gamma_1 \subset \C$ the curve consisting of these three sides and define $\Gamma_2 \subset \C$ similarly by the points $\pm 2\epsilon$, $\pm 2\epsilon + i\tilde{a}_2$. Using Cauchy's theorem twice we rewrite $III'$ as
\begin{align}
D_{\xx,\xt,y}^\mathfrak{a} III' &= \int_{\Gamma_1} \int_{\Gamma_2} D_{\xx,\xt,y}^\mathfrak{a} h(\nu_x,\nu_y,\xx,\xt,y,t)d\nu_yd\nu_x. \label{CauchyIII}
\end{align}
Our next step is to bound the integrand $D_{\xx,\xt,y}^\mathfrak{a} h(\nu_x,\nu_y,\xx,\xt,y,t)$ as a complex function of ${\pmb \nu} \in \Gamma_1 \times \Gamma_2$. For ${\pmb \nu} \in \Gamma_1 \times \Gamma_2 \subset U$ we bound using Lemma~\ref{Lem1}
\begin{align}
\begin{split}
\left\|e^{\h({\pmb \nu})t} \right.\!&\!\left.\phi({\pmb \nu}) D_{\xx,\xt,y}^\mathfrak{a}\left(q({\pmb \nu},\xx) q_\ad({\pmb \nu},\xt)^*\right) - D_{\xx,\xt,y}^\mathfrak{a} \left(q(0,\xx) q_\ad(0,\xt)^*\right)\right\| \\
&\leq \left|e^{\h({\pmb \nu})t} - 1\right|\left\|\phi({\pmb \nu}) D_{\xx,\xt,y}^\mathfrak{a}\left(q({\pmb \nu},\xx) q_\ad({\pmb \nu},\xt)^*\right)\right\| \\
 &\qquad + \left\|\phi({\pmb \nu}) D_{\xx,\xt,y}^\mathfrak{a} \left(q({\pmb \nu},\xx) q_\ad({\pmb \nu},\xt)^*\right) - D_{\xx,\xt,y}^\mathfrak{a}\left(q(0,\xx) q_\ad(0,\xt)^*\right)\right\|\\
&\leq C\left(e^{|\h({\pmb \nu})| t} - 1 + |{\pmb \nu}|\right)\\
&\leq C\left(|\h({\pmb \nu})| t e^{|\h({\pmb \nu})|t} + |\nu_y| + |\nu_x|\right)\\
&\leq C\left(\left(|\nu_x| + |\nu_y|\right)^3 t + |\nu_y| + |\nu_x|\right) e^{c_0 \epsilon (\nu_x^2 + \nu_y^2) t},
\end{split}\label{estinteg}
\end{align}
where $c_0$ is a constant independent of $\epsilon$. By taking $\epsilon > 0$ smaller if necessary, we may assume $2c_0\epsilon \leq \theta, d_\perp$. We apply the triangle inequality to~\eqref{CauchyIII} and bound the integrand using~\eqref{estinteg}. This leads to a sum of terms of the form
\begin{align}
\begin{split}
&C t^m \int_{\Gamma_1} \int_{\Gamma_2} \left|e^{i{\pmb \nu} \cdot {\pmb \xi}} e^{(i\alpha\nu_x - \theta \nu_x^2 - d_\perp \nu_y^2)t + c_0 \epsilon (\nu_x^2 + \nu_y^2) t} |\nu_x|^k |\nu_y|^l \right|d{\pmb \nu}\\
&= C t^m \int_{\Gamma_1}  \left|e^{i\nu_x (\xx - \xt + \alpha t) - (\theta - c_0\epsilon) \nu_x^2 t}\right| |\nu_x|^k d\nu_x \int_{\Gamma_2} \left|e^{i\nu_y y - (d_\perp - c_0\epsilon) \nu_y^2 t} \right| |\nu_y|^l d\nu_y,
\end{split}
\label{summand}
\end{align}
with integers $k, l, m \geq 0$ satisfying $k + l - 2m \geq 1$. Using~\eqref{intid2} and the identities
\begin{align*}
-\tilde{a}_2y &\leq -2d_\perp \tilde{a}_2^2 t,   \quad -zy \leq -2d_\perp |z| |\tilde{a}_2|t \leq -2d_\perp z^2t, \quad z \in [0,\tilde{a}_2],
\end{align*}
we estimate the second factor in~\eqref{summand}
\begin{align*}
& \int_{\Gamma_2} \left|e^{i\nu_y y - (d_\perp - c_0\epsilon) \nu_y^2 t} \right| |\nu_y|^l d\nu_y,\\
& \quad \leq C \int_{-2\epsilon}^{2\epsilon} \left|e^{i(z + i\tilde{a}_2) y - (d_\perp - c_0\epsilon) (z+i\tilde{a}_2)^2t}\right| |z+i\tilde{\alpha_2}|^l dz + C\int_0^{\tilde{a}_2} \left|e^{i(2\epsilon + iz) y - (d_\perp - c_0\epsilon) (2\epsilon + iz)^2t}\right| |2\epsilon + iz|^l dz \\
&\quad \leq C e^{-d_\perp \tilde{a}_2^2 t/2} \sum_{j = 0}^l\int_{-2\epsilon}^{2\epsilon} e^{-(d_\perp - c_0\epsilon)z^2 t} |z|^{l-j}|\tilde{\alpha_2}|^{j} dz + Ce^{-2d_\perp \epsilon^2 t} \sum_{j = 0}^l\int_0^{|\tilde{a}_2|} e^{-(d_\perp - c_0\epsilon) z^2 t} |z|^{l-j}\epsilon^{j} dz \\
&\quad \leq C  e^{-d_\perp \tilde{a}_2^2 t/2} \sum_{j = 0}^l\min\left\{1,t^{-(l+1-j)/2} |\tilde{\alpha_2}|^{j}\right\} + e^{-2d_\perp \epsilon^2 t/2}\sum_{j = 0}^l \min\left\{1, t^{-(l+1-j)/2} \epsilon^{j}\right\}\\
&\quad \leq C  e^{-d_\perp \tilde{a}_2^2 t/4} \min\left\{1, t^{-(l+1)/2} \right\} + e^{-d_\perp \epsilon^2 t} \min\left\{1, t^{-(l+1)/2}\right\}\\
&\quad \leq C (1+t)^{-(1+l)/2} \left(e^{-d_\perp \tilde{a}_2^2 t/4} + e^{-d_\perp \epsilon^2 t}\right)\\
&\quad \leq C (1+t)^{-(1+l)/2} e^{-\frac{|y|^2}{Mt}},
\end{align*}
for some $M > 0$, using that $(|\xx-\xt+\alpha t|+|y|)/t$ is bounded in the latter inequality. Similarly, one establishes the bound on the second factor in~\eqref{summand}
\begin{align*}
\int_{\Gamma_1}  \left|e^{i\nu_x (\xx - \xt + \alpha t) - (\theta - c_0\epsilon) \nu_x^2 t}\right| |\nu_x|^k d\nu_x \leq C (1+t)^{-(1+k)/2} e^{-\frac{|\xx-\xt + \alpha t|^2}{Mt}},
\end{align*}
for some $M > 0$. We employ the latter two estimates to bound the summands of the form~\eqref{summand} in order to obtain
\begin{align*} \left\|D_{\xx,\xt,y}^\mathfrak{a} III'\right\| &\leq  \int_{\Gamma_1} \int_{\Gamma_2} \left\| D_{\xx,\xt,y}^\mathfrak{a} h(\nu_x,\nu_y,\xx,\xt,y,t)\right\|d\nu_yd\nu_x \leq C(1+t)^{-3/2} e^{-\frac{|\xx-\xt+\alpha t|^2 + |y|^2}{Mt}},\end{align*}
using $k+l-2m\geq 1$ holds in~\eqref{summand}.

Next, we split $II$ into two integrals
\begin{align*} 4\pi^2 II &= \int_{|{\pmb \nu}|_{\R^2} \leq \epsilon} e^{i{\pmb \nu} \cdot {\pmb \xi}} (1-\phi({\pmb \nu})P({\pmb \nu}))e^{L_{{\pmb \nu}} t} \check{\delta}_{\xt}({\pmb \nu},\xx)d{\pmb \nu} + \int_{{\pmb \nu} \in \Omega, |{\pmb \nu}|_{\R^2} \geq \epsilon} e^{i{\pmb \nu} \cdot {\pmb \xi}} (1-\phi({\pmb \nu})P({\pmb \nu}))e^{L_{{\pmb \nu}} t} \check{\delta}_{\xt}({\pmb \nu},\xx)d{\pmb \nu}\\
&= \widetilde{I} + \widetilde{II},
\end{align*}
We start estimating $\widetilde{II}$. Note that by assumption (D3) the spectrum $\sigma(L_{\pmb \nu})$ is confined to $\Re(\lambda) < -\eta$ for ${\pmb \nu} \in \Omega$ with $|{\pmb \nu}| \geq \epsilon$. Combining this with Corollary~\ref{high3} yields that there exists $\kappa > 0$, independent of ${\pmb \nu}$ and $\lambda$, such that the contour
\begin{align} \Gamma_{\pmb \nu} := \{\lambda \in \C \colon \Re(\lambda) = -\kappa(1 + \nu_y^2 + |\Im(\lambda)|)\}, \label{contour}\end{align}
is contained in the resolvent set of $L_{\pmb \nu}$ for ${\pmb \nu} \in \Omega$ with $|{\pmb \nu}| \geq \epsilon$. Using the Laplace transform we rewrite $\widetilde{II}$ as
\begin{align*} \widetilde{II} = \frac{1}{2\pi i} \int_{{\pmb \nu} \in \Omega, |{\pmb \nu}| \geq \epsilon} \int_{\Gamma_{\pmb \nu}} e^{i{\pmb \nu} \cdot {\pmb \xi} + \lambda t} (1-\phi({\pmb \nu})P({\pmb \nu})) \G_{{\pmb \nu},\lambda}(\xx,\xt)d\lambda d{\pmb \nu},\end{align*}
where $\G_{{\pmb \nu},\lambda}(\xx,\xt)$ denotes the Green's function associated with the operator $L_{\pmb \nu} - \lambda$. Let $j \in \{0,1\}$. Using Corollary~\ref{high3}, Remark~\ref{mid} and the integral identities~\eqref{intid2} and~\eqref{intid3} we estimate
\begin{align*} \left\|\partial_y^j \ \widetilde{II}\ \right\| &\leq C \int_{{\pmb \nu} \in \Omega, |{\pmb \nu}| \geq \epsilon} \int_{\Gamma_{\pmb \nu}} |\nu_y|^j e^{\Re(\lambda) t} \left\|\G_{{\pmb \nu},\lambda}(\xx,\xt)\right\|d\lambda d{\pmb \nu}\\
&\leq C e^{-\kappa t} \int_{\R} \int_0^\infty \frac{|\nu_y|^j}{\sqrt{|\Im(\lambda)|}} e^{-\kappa t(|\Im(\lambda)|  + \nu_y^2)}d|\Im(\lambda)| d\nu_y\\
&\leq C t^{-1- \frac{j}{2}} e^{-\kappa t}\\
&\leq C t^{-1 - \frac{j}{2}}e^{-m t}e^{-\frac{|\xx-\xt + \alpha t|^2 + |y|^2}{Mt}},
\end{align*}
for some $m > 0$ and $M > 1$, using that $(|\xx-\xt+\alpha t|+|y|)/t$ is bounded in the latter inequality. Let $\mathfrak{c} \in \Z_{\geq 0}^2$ be a multi-index with $|\mathfrak{c}| = 1$. Analogously, we estimate
\begin{align*} \left\|D_{\xx,\xt}^\mathfrak{c} \ \widetilde{II} \ \right\| &\leq C \int_{{\pmb \nu} \in \Omega, |{\pmb \nu}| \geq \epsilon} \int_{\Gamma_{\pmb \nu}} e^{\Re(\lambda) t} \left(|\nu_x|\left\|\G_{{\pmb \nu},\lambda}(\xx,\xt)\right\|+\left\|D_{\xx,\xt}^\mathfrak{c} \G_{{\pmb \nu},\lambda}(\xx,\xt)\right\|\right)d\lambda d{\pmb \nu}\\
&\leq C e^{-\kappa t} \int_{\R} \int_0^\infty \left(1 + \frac{1}{\sqrt{|\Im(\lambda)|}} \right) e^{-\kappa t(|\Im(\lambda)|  + \nu_y^2)}d|\Im(\lambda)| d\nu_y\\
&\leq C t^{-1}\left(1 + t^{-1/2}\right) e^{-\kappa t}\\
&\leq C t^{-3/2} e^{-m t}e^{-\frac{|\xx-\xt + \alpha t|^2 + |y|^2}{Mt}},
\end{align*}
for some $m > 0$ and $M > 1$.

For the estimation of $\widetilde{I}$ we note that $\phi({\pmb \nu}) = 1$ for $|{\pmb \nu}|_{\R^2} \leq \epsilon$. For $|{\pmb \nu}|_{\R^2} \leq \epsilon$ the spectrum of the operator $L_{\pmb \nu}$ restricted to $\ker(P({\pmb \nu}))$ is confined to $\Re(\lambda) < -\eta$ by Lemma~\ref{Lem1} (by taking $\eta$ smaller if necessary), since $P({\pmb \nu})$ is the spectral projection associated with the critical eigenvalue $\lambda_0({\pmb \nu})$ of $L_{\pmb \nu}$. Therefore, we may assume (by adapting $\kappa > 0$ if necessary) that the contour $\Gamma_{\pmb \nu}$ defined in~\eqref{contour} lies in the resolvent set of $L_{\pmb \nu}|_{\ker(P({\pmb \nu}))}$ for all $|{\pmb \nu}|_{\R^2} \leq \epsilon$ by Corollary~\ref{high3}. Estimating $\widetilde{I}$ is therefore similar to estimating $\widetilde{II}$. Let $\widetilde{\G}_{{\pmb \nu},\lambda}(\xx,\xt)$ be the Green's function associated with the operator $L_{\pmb \nu}|_{\ker(P({\pmb \nu}))} - \lambda$. Let $K$ be some compact set in $\Omega \times \C$ such that for pairs $({\pmb \nu},\lambda) \in K$ it holds $\lambda \in \rho(L_{\pmb \nu}|_{\ker(P({\pmb \nu}))})$. By analyticity of the Green's function in $\lambda$ and ${\pmb \nu}$ it holds
\begin{align*} \sup_{\begin{smallmatrix} ({\pmb \nu},\lambda) \in K,\\ \xx,\xt \in [0,1]\end{smallmatrix}} \left\|D_{\xx,\xt}^\mathfrak{c}\widetilde{\G}_{{\pmb \nu},\lambda}(\xx,\xt)\right\| \leq C,\end{align*}
where $\mathfrak{c} \in \Z_{\geq 0}^2$ is a multi-index with $0 \leq |\mathfrak{c}| \leq 1$. Thus, using the Laplace transform, Corollary~\ref{high3} and the integral identities~\eqref{intid2} and~\eqref{intid3} we estimate
\begin{align*} \left\|\partial_y^j \ \widetilde{I} \ \right\| &\leq C \int_{|{\pmb \nu}|_{\R^2} \leq \epsilon} \int_{\lambda \in \Gamma_{\pmb \nu}, |\lambda| \leq R} |\nu_y|^j e^{\Re(\lambda) t} \left\|\widetilde{\G}_{{\pmb \nu},\lambda}(\xx,\xt)\right\|d\lambda d{\pmb \nu}\\ &\qquad + C\int_{|{\pmb \nu}|_{\R^2} \leq \epsilon} \int_{\lambda \in \Gamma_{\pmb \nu}, |\lambda| > R} |\nu_y|^j e^{\Re(\lambda) t} \left\|\G_{{\pmb \nu},\lambda}(\xx,\xt)\right\|d\lambda d{\pmb \nu}\\
&\leq Ce^{-\kappa t} \int_{\R} \int_0^\infty |\nu_y|^j  \left(1 + \frac{1}{\sqrt{|\Im(\lambda)|}}\right)  e^{-\kappa t(|\Im(\lambda)|  + \nu_y^2)}d|\Im(\lambda)| d\nu_y\\
&\leq C t^{-1 - \frac{j}{2}}e^{-m t}e^{-\frac{|\xx-\xt + \alpha t|^2 + |y|^2}{Mt}},
\end{align*}
for some $m > 0$ and $M > 1$, using that $(|\xx-\xt+\alpha t|+|y|)/t$ is bounded in the latter inequality. Analogously, we estimate
\begin{align*} \left\|D_{\xx,\xt}^\mathfrak{c}\ \widetilde{I} \ \right\| &\leq C \int_{|{\pmb \nu}|_{\R^2} \leq \epsilon} \int_{\lambda \in \Gamma_{\pmb \nu}, |\lambda| \leq R} e^{\Re(\lambda) t} \left(|\nu_x|\left\|\widetilde{\G}_{{\pmb \nu},\lambda}(\xx,\xt)\right\| + \left\|D_{\xx,\xt}^\mathfrak{c} \widetilde{\G}_{{\pmb \nu},\lambda}(\xx,\xt)\right\|\right)d\lambda d{\pmb \nu}\\ &\qquad + C\int_{|{\pmb \nu}|_{\R^2} \leq \epsilon} \int_{\lambda \in \Gamma_{\pmb \nu}, |\lambda| > R} e^{\Re(\lambda) t}  \left(|\nu_x|\left\|\G_{{\pmb \nu},\lambda}(\xx,\xt)\right\|+\left\|D_{\xx,\xt}^\mathfrak{c} \G_{{\pmb \nu},\lambda}(\xx,\xt)\right\|\right)d\lambda d{\pmb \nu}\\
&\leq C e^{-\kappa t} \int_{\R} \int_0^\infty  \left(1 + \frac{1}{\sqrt{|\Im(\lambda)|}}\right) e^{-\kappa t(|\Im(\lambda)|  + \nu_y^2)}d|\Im(\lambda)| d\nu_y\\
&\leq C t^{-\frac{3}{2}}e^{-m t}e^{-\frac{|\xx-\xt + \alpha t|^2 + |y|^2}{Mt}},
\end{align*}
for some $m > 0$ and $M > 1$.
\end{proof}

\subsection{Proof of Theorem~\ref{theope}}

We employ Lemmas~\ref{Point1} and~\ref{Point2} to prove Theorem~\ref{theope}. First, observe that a direct calculation establishes the estimates in~\eqref{pointwiseestimates} on $e(\xx,\xt,y,t)$ and its derivatives. Now, let $S > 1$ be as in Lemma~\ref{Point1}. It holds by Lemma~\ref{Point2}
\begin{align*}
\GT(\xx,\xt,y,t) = \begin{cases} \G(\xx,\xt,y,t) + \frac{1-\chi(t)}{4\pi t\sqrt{d_\perp \theta}}e^{-\frac{|\xx-\xt+\alpha t|^2}{4\theta t} - \frac{|y|^2}{4d_\perp t}} u_\infty'(\xx) u_{\ad} (\xt)^* & \text{if } |\xx-\xt+\alpha t|+|y| \leq St,\\
G(\xx,\xt,y,t) - e(\xx,\xt,y,t) & \text{if } |\xx-\xt+\alpha t|+|y| > St,
\end{cases}
\end{align*}
for $\xx,\xt,y \in \R$ and $t > 0$. The bounds in~\eqref{pointwiseestimates} on $G(\xx,\xt,y,t)$ and its derivatives follow for $\xx,\xt,y \in \R, t > 0$ satisfying $|\xx-\xt+\alpha t|+|y| \leq St$  from Lemma~\ref{Point1} and the fact that $t \mapsto 1 - \chi(t)$ vanishes for $t \geq 2$. The bounds in~\eqref{pointwiseestimates} on $G(\xx,\xt,y,t)$ and its derivatives follow for $\xx,\xt,y \in \R, t > 0$ satisfying $|\xx-\xt+\alpha t|+|y| > St$ from Lemma~\ref{Point2} and the estimate
\begin{align*} \left\|D_{\xx,\xt,y}^\mathfrak{a} e(\xx,\xt,y,t)\right\| &\leq C(1+t)^{-1} e^{-\frac{|\xx-\xt + \alpha t|^2 + |y|^2}{Mt}} \leq C(1+t)^{-1}e^{-\frac{S^2}{2M}t} e^{-\frac{|\xx-\xt + \alpha t|^2 + |y|^2}{2Mt}},\end{align*}
where $C,M > 1$ are some constants, which are independent of $\xx,\xt,y$ and $t$. $\hfill \Box$

\bibliographystyle{plain}
\bibliography{mybib2}

\end{document}